\documentclass{amsart}
\usepackage{amsfonts,amssymb,amsmath,amsthm}
\usepackage{url}
\usepackage{enumerate}
\vfuzz2pt 
\hfuzz2pt 
\newtheorem{thm}{Theorem}[section]
\newtheorem{cor}[thm]{Corollary}
\newtheorem{lem}[thm]{Lemma}
\newtheorem{prop}[thm]{Proposition}
\theoremstyle{definition}

\theoremstyle{remark}
\newtheorem{rem}[thm]{\bf Remark}
\newtheorem{exe}[thm]{\bf Example}

\numberwithin{equation}{section}

\begin{document}
\title{DYNAMICS OF NON ABELIAN AFFINE HOMOTHETIES GROUP  OF $\mathbb{C}^{n}$}

\author{Yahya N'dao and Adlene Ayadi}

 \address{Yahya N'dao, University of Moncton, Faculty of sciences, Moncton, Canada}
\address{Adlene Ayadi, University of Gafsa, Faculty of sciences, Department of Mathematics,Gafsa, Tunisia.}

\email{yahiandao@yahoo.fr ,\;\;\; adleneso@yahoo.fr}

\thanks{This work is supported by the research unit: syst\`emes dynamiques et combinatoire:
99UR15-15} \subjclass[2000]{37C85} \keywords{Homothety, Rotation,
orbit, density, minimal, non abelian, action, dynamic,...}

\begin{abstract}

In this paper, we study the action of non abelian group $G$
generated by affine homotheties  on\ $\mathbb{C}^{n}$. We prove
that there exist a  subgroup $\Lambda_{G}$
of $\mathbb{C}^{*}$, a $G$-invariant affine subspace $E_{G}$ of $\mathbb{C}^{n}$ and $a\in E_{G}$ such that
 $\overline{G(z)}=\overline{\Lambda_{G}}(z-a)+E_{G}$ for every $z\in \mathbb{C}^{n}$. In particular,
 $\overline{G(z)}=E_{G}$ for every $z\in E_{G}$ and if $E_{G}\neq\mathbb{C}^{n}$, every orbit in
$U=\mathbb{C}^{n}\backslash E_{G}$  is minimal in $U$. Moreover, we characterize the
 existence of dense orbit of $G$. As a consequence of the case $n=1$, we describe the action of affine rotations groups of $\mathbb{R}^{2}$.
\end{abstract}
\maketitle

\section{\bf Introduction }

 A map  $f :  \mathbb{C}^{n}\longrightarrow
\mathbb{C}^{n}$  is called an affine homothety  if there exists
$\lambda\in\mathbb{C}\backslash \{0,1\}$  and  $a\in
\mathbb{C}^{n}$  such that  $f(z)= \lambda (z-a)+a$  for every
 $z\in \mathbb{C}^{n}$. (i.e.  $f=T_{a}\circ (\lambda.
id_{\mathbb{C}^{n}})\circ T_{-a}$, \ $T_{a}:z\longmapsto z+a$, $id_{\mathbb{C}^{n}}$ the identity map of $\mathbb{C}^{n}$).  Write $f= (a,\lambda)$  and we call \ $a$ \
{\it the center} of  $f$ and \ $\lambda$ \ the ratio of $f$. \
\\
Denote by:\
\\
$\bullet$  $\mathcal{H}(n,\mathbb{K})$ the  group generated by all affine homotheties of
 $\mathbb{K}^{n}$ ($\mathbb{K}=\mathbb{R}$ or $\mathbb{C}$). i.e.
$$\mathcal{H}(n,\mathbb{K}):=\left\{\ f:z\
\longmapsto\ \lambda z+a;\ a\in\mathbb{K}^{n},\
\lambda\in\mathbb{K}^{*} \right\}.$$
\
\\
$\bullet$ $\mathcal{R}_{n}$ the  subgroup of $\mathcal{H}(n, \mathbb{C})$ generated by all affine rotations
  of $\mathbb{C}^{n}$. i.e. $$\mathcal{R}_{n}:=\left\{\ f:z\
\longmapsto\ e^{i\theta} z+a;\ a\in\mathbb{C}^{n},\
\theta\in\mathbb{R} \right\}.$$
\
\\
$\bullet$ $\mathcal{T}_{n}$  the subgroup of $\mathcal{H}(n, \mathbb{C})$ generated by
all translations of $\mathbb{C}^{n}$.
\
\\
$\bullet$ Let $H_{2}=\left(\frac{\pi}{2}+\pi\mathbb{Z}\right)\cup (\pi\mathbb{Z})$, $F_{2}=\{e^{ix},\ \ x\in H_{2}\}$ and
  $$\mathcal{S}_{2}\mathcal{R}_{n}:=\left\{\ f=(a,e^{i\theta})\in\mathcal{R}_{n};\ \theta\in H_{2},a\in\mathbb{C}^{n}\right\}.$$\
  \\
 $\bullet$ Let $H_{3}=\left(\frac{\pi}{3}+\pi\mathbb{Z}\right)\cup \left(-\frac{\pi}{3}+\pi\mathbb{Z}\right)\cup (\pi\mathbb{Z})$, $F_{3}=\{e^{ix},\ \ x\in H_{3}\}$ and
  $$\mathcal{S}_{3}\mathcal{R}_{n}:=\left\{\ f=(a,e^{i\theta})\in\mathcal{R}_{n};\ \theta\in H_{3},a\in\mathbb{C}^{n}\right\}.$$\
\\
We have $F_{2}$ and $F_{3}$ are finite, $\mathcal{S}_{2}\mathcal{R}_{n}$ and $\mathcal{S}_{3}\mathcal{R}_{n}$ are subgroups of $\mathcal{H}(n, \mathbb{C})$
containing $\mathcal{T}_{n}$.
\\
 $\bullet$ $\mathcal{SR}_{n}:=\mathcal{S}_{2}\mathcal{R}_{n}\cup\mathcal{S}_{3}\mathcal{R}_{n}$.\
\\
\\
 We say a {\it group of affine
homotheties} of $\mathbb{C}^{n}$ any subgroup of $\mathcal{H}(n,\mathbb{C})$.\
\bigskip

 Let $G$ be a non abelian  subgroup of
$\mathcal{H}(n,\mathbb{C})$. There is a natural action
$\mathcal{H}(n,\mathbb{C})\times \mathbb{C}^{n} :\
\longrightarrow\ \mathbb{C}^{n}$.\ $(f, v)\ \longmapsto\ f(v)$.
For a vector $v \in \mathbb{C}^{n}$, denote by $G(v) := \{f(v):\  f
\in G \}\subset \mathbb{C}^{n}$ the \emph{orbit} of $G$ through
$v$. A subset  $A \subset \mathbb{C}^{n}$  is called
\emph{$G$-invariant} if $f(A)\subset A$  for any $f\in G$; that
is  $A$  is a union of orbits and denote by $\overline{A}$
(resp. $\overset{\circ}{A}$ ) the closure (resp. interior) of $A$.
\\
If  $U$  is an
 open  $G$-invariant set, the orbit  $G(v)\subset U$ \ is
 called \emph{minimal in  $U$} if  $\overline{G(v)}\cap
U = \overline{G(w)}\cap U$  for every  $w\in \overline{G(v)}\cap
U$.
\medskip

 We say that $H$ is an {\it affine subspace} of
$\mathbb{C}^{n}$ with dimension $p$ if  $H=E+a$, for some $a\in\mathbb{C}^{n}$
and some vector subspace $E$ of $\mathbb{C}^{n}$
with dimension $p$. For every subset $A$ of $\mathbb{C}^{n}$,
denote by $vect(A)$ (resp. $Aff(A)$) the vector (resp. affine) subspace of $\mathbb{C}^{n}$
generated by all elements of $A$.
\
\\
\\
 Denote by:\
 \\
 - $\Lambda_{G}:=\{\lambda: \ f=(a,\lambda)\in
G\}$. It is obvious that $\Lambda_{G}$ is a subgroup of $\mathbb{C}^{*}$ (see Lemma ~\ref{L:00}).
 \\
- $\mathrm{Fix}(f):=\{z\in \mathbb{C}^{n}: \
f(z)=z\}$,  for every $f\in \mathcal{H}(n,
\mathbb{C})$. See that $Fix(f)=\emptyset$ if $f\in \mathcal{T}_{n}$ and $Fix(f)=a$ if $f=(a,\lambda)\in
 \mathcal{H}(n, \mathbb{C})\backslash \mathcal{T}_{n}$.
\\
 - $\Gamma_{G}:=
 \underset{f\in
G\backslash\mathcal{T}_{n}}{\bigcup}\mathrm{Fix}(f).$  Since $G$ is non abelian then
$G\backslash\mathcal{T}_{n}\neq\emptyset$, so $\Gamma_{G}\neq\emptyset$.
\\
- $G_{1}:=G\cap \mathcal{T}_{n}$ is a subgroup
of  $\mathcal{T}_{n}$.
\
\\
- $G_{1}(0)=\{f(0),\ f\in G_{1}\}$.
\
\\
- \ $E_{G}=Aff(\Gamma_{G}\cup G_{1}(0))$ the  affine subspace of
$\mathbb{C}^{n}$ generated by  $\Gamma_{G}\cup G_{1}(0)$.
\
\\
- \ $S^{1}=\{z\in\mathbb{C}:\ \  |z|=1\}$.
\\
Remark that $E_{G}\neq\emptyset$ since $\Gamma_{G}\neq\emptyset$ and $G_{1}(0)\neq\emptyset$.
\medskip

In \cite{AYN}, we have described the action of non abelian subgroup of $\mathcal{H}(n,\mathbb{R})$.
This paper can be viewed as continuation of that work.
\
\\
I learned that Zhukova  have proved in \cite{NZh} similar results to Lemma ~\ref{L:9},
Proposition ~\ref{p:1} and Corollary ~\ref{C:1}.(ii), in the real case.
 The methods of proof in \cite{NZh} and in this paper are quite different and have different consequences.
\medskip

 In \cite{AG}, Arek Goetz investigates noninvertible piecewise isometries in $\mathbb{R}^{2}$
  with the particular interest on the maximal invariant sets and $\omega$-limit sets.
   Unlike in \cite{MB}, the
induced isometries $T_{0}$ and $T_{1}$ of its system $T : X
\longrightarrow X$ are not translations but rotations. The
partition $P$ consists of two atoms: $P_{0}$, the open left
halfplane and $P_{1}$, the closed right halfplane.

\medskip

 Our principal results can be stated as
follows:

\begin{thm}\label{T:1} Let  $G$ be a non abelian subgroup of
$\mathcal{H}(n,\mathbb{C})$ such that $\Lambda_{G}\backslash\mathbb{R}\neq\emptyset$. Then :\\ $(1)$ If
$G\backslash\mathcal{SR}_{n}\neq\emptyset$, one has:
\begin{itemize}
  \item [(i)]  $\overline{G(z)}=E_{G}$, for every $z\in E_{G}$.
   \item [(ii)] if $U=\mathbb{C}^{n}\backslash E_{G}\neq\emptyset$, there exists $a\in E_{G}$ such that
    $\overline{G(z)}=\overline{\Lambda_{G}}(z-a)+E_{G}$, for every $z\in U$.
   \end{itemize}
$(2)$ If  $G\subset\mathcal{SR}_{n}$,  one has:\
\begin{itemize}
\item[(i)] $G\subset\mathcal{S}_{i}\mathcal{R}_{n}$ for some $i\in\{2,3\}$.\
\item[(ii)] $\overline{G(z)}=F_{i}z+\overline{G(0)}$, for every $z\in \mathbb{C}^{n}$.\
\end{itemize}
\end{thm}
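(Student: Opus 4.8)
The plan is to exploit the linear-part homomorphism together with the semidirect structure it induces, and then to reduce all the density statements to a single fact about closed subgroups of $\mathbb{C}^{n}$ that are invariant under multiplication by $\overline{\Lambda_{G}}$.

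First I would record the algebraic skeleton. Writing each $f=(a,\lambda)$ as $f(z)=\lambda z+(1-\lambda)a$, the map $\rho:G\to\mathbb{C}^{*}$, $f\mapsto\lambda$, is a homomorphism with image $\Lambda_{G}$ and kernel $G_{1}=G\cap\mathcal{T}_{n}$. Three computations drive everything: for $g=(b,\mu)$ and a translation $t_{v}:z\mapsto z+v$ one has $g\,t_{v}\,g^{-1}=t_{\mu v}$ and $g\,(a,\lambda)\,g^{-1}=(g(a),\lambda)$, and the commutator of $(a,\lambda)$, $(a',\lambda')$ is the translation by $(1-\lambda)(1-\lambda')(a-a')$. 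From the first relation $G_{1}(0)$ is a $\Lambda_{G}$-invariant additive subgroup of $\mathbb{C}^{n}$; from the second $\Gamma_{G}$ is $G$-invariant; from the commutator formula (and its same-ratio degeneration $(1-\lambda)(a-a')\in G_{1}(0)$) every difference of two centers lies in $vect(G_{1}(0))$, so $\Gamma_{G}\subseteq a_{0}+vect(G_{1}(0))$. Non-abelianness makes the commutators nontrivial, so $G_{1}(0)\neq\{0\}$. Since $vect(G_{1}(0))$ is a complex subspace it is $\Lambda_{G}$-invariant, whence $g(v)=b+\mu(v-b)\in E_{G}$ for $v\in G_{1}(0)$; combined with $g(\Gamma_{G})\subseteq\Gamma_{G}$ this gives $g(E_{G})=Aff(g(\Gamma_{G})\cup g(G_{1}(0)))\subseteq E_{G}$, so $E_{G}$ is a $G$-invariant affine subspace and already $\overline{G(z)}\subseteq E_{G}$ for $z\in E_{G}$.

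The whole dichotomy is then a statement about $W:=\overline{G_{1}(0)}$, a closed subgroup of $\mathbb{C}^{n}\cong\mathbb{R}^{2n}$ invariant under $\overline{\Lambda_{G}}$, where $\Lambda_{G}$ contains a non-real $\lambda$. Fixing a nonzero $v\in G_{1}(0)$ and working in the complex line $\mathbb{C}v$ I would split into cases: if some ratio has modulus $\neq1$ then $\{\lambda^{k}v\}$ accumulates at $0$, the identity component of $W$ is nonzero, and as $\lambda$ has no real eigenline we get $\mathbb{C}v\subseteq W$; if $\overline{\Lambda_{G}}\cap S^{1}$ is infinite it equals $S^{1}$, so a full circle lies in $W$ and again $\mathbb{C}v\subseteq W$; and if $\Lambda_{G}$ is a finite rotation group of order $m\geq 3$ then $W\cap\mathbb{C}v$ is a rank-$2$ lattice admitting an order-$m$ symmetry, so by the crystallographic restriction $m\in\{3,4,6\}$. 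These are exactly the orders of the roots of unity in $F_{2}=\mu_{4}$ and $F_{3}=\mu_{6}$, which is precisely the condition $G\subset\mathcal{SR}_{n}$; part $(2)(i)$ then follows, since $F_{2}\cap F_{3}=\{\pm1\}$ while a primitive $4$th root times a primitive $3$rd or $6$th root has order $12$, forcing $\Lambda_{G}$ to lie entirely in $\mu_{4}$ or entirely in $\mu_{6}$.

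For part $(1)$ the hypothesis $G\not\subset\mathcal{SR}_{n}$ lands us in one of the first three cases, so $W$ contains a complex line; transporting it by the $\Lambda_{G}$- and center-conjugations I would upgrade this to $W\supseteq vect(G_{1}(0))$ and deduce that the orbit of each $z\in E_{G}$ is dense in $E_{G}$, which is $(1)(i)$. For $(1)(ii)$ I take $a\in\Gamma_{G}\subseteq E_{G}$ and use the normal form $f(z)=\lambda_{f}(z-a)+f(a)$; pre-composing with $G_{1}$ leaves $\lambda_{f}$ unchanged but moves $f(a)$ by $G_{1}(0)$, so the two factors $\lambda_{f}(z-a)$ and $f(a)$ decouple, and taking closures (using $(1)(i)$ for the second factor) gives $\overline{G(z)}=\overline{\Lambda_{G}}(z-a)+E_{G}$. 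The genuine obstacle I expect is exactly $(1)(i)$: proving the orbit fills \emph{all} of $E_{G}$ rather than being trapped in a proper $G$-invariant affine subset. This is the step where the density of the $\Lambda_{G}$-orbit of a single translation vector (the crystallographic-violation input) must be matched against the affine-span definition of $E_{G}$ and against the center coset $a_{0}+vect(G_{1}(0))$, and it is the point most sensitive to how $vect$ and $Aff$ interact with the merely additive group $G_{1}(0)$.

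Finally, in part $(2)$ the group $\Lambda_{G}$ is finite and $W$ is a lattice, so $G(0)$ is a discrete set stabilized by the rotation factor. Writing $f(z)=\lambda_{f}z+f(0)$ gives $G(z)=\bigcup_{\lambda}\bigl(\lambda z+\{f(0):\lambda_{f}=\lambda\}\bigr)$, and passing to closures yields the product form $\overline{G(z)}=F_{i}z+\overline{G(0)}$. The one point needing care here is the identification of the rotation factor with $F_{i}$ as recorded in the statement; I would pin this down from the $F_{i}$-invariance of $\overline{G(0)}$ together with the membership $G\subset\mathcal{S}_{i}\mathcal{R}_{n}$ established in $(2)(i)$.
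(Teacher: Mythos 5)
You take a genuinely different route from the paper. The paper reduces everything to dimension one along the invariant lines $\mathbb{C}(b-a)+a$ joining pairs of centers (Lemma \ref{L:9}, Propositions \ref{p:a10}, \ref{p:b10}, Corollary \ref{CC:01}) and then fills $E_{G}$ by induction on $\dim E_{G}$ (Lemma \ref{L:12}), with Lemmas \ref{L:1}(ii) and \ref{L:1200} supplying a basis of $E_{G}$ made of centers; you instead analyze the single closed subgroup $W=\overline{G_{1}(0)}$, invariant under $\overline{\Lambda_{G}}$, via a trichotomy (a ratio of modulus $\neq 1$; an infinite rotation group; a finite rotation group excluded by the crystallographic restriction). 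The arithmetic core is the same --- the $SL(2,\mathbb{Z})$ computation in Proposition \ref{p:b10} \emph{is} the crystallographic restriction --- but your organization is more conceptual, and your order-$12$ argument for $(2)(i)$ is cleaner and more complete than the one-line justification of Lemma \ref{L:030}(i).

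The genuine gap is exactly the step you yourself flagged: deducing $(1)(i)$ from $W\supseteq vect(G_{1}(0))$. That inclusion only gives $z+vect(G_{1}(0))\subseteq\overline{G(z)}$; to conclude $\overline{G(z)}=E_{G}$ you must prove that $E_{G}$ is a \emph{single coset} of $V:=vect(G_{1}(0))$, i.e. $E_{G}=a_{0}+V$ for a center $a_{0}$. You have the decisive ingredient --- your commutator formula gives $\Gamma_{G}\subseteq a_{0}+V$ --- but you never assemble it with the definition of $E_{G}$, and with the paper's literal definition it \emph{cannot} be assembled, because the claim is then false. Indeed $id_{\mathbb{C}^{n}}\in G_{1}$, so $0\in G_{1}(0)\subseteq E_{G}$; take $n=2$, $\lambda=2i$, and $G$ generated by $f=(a,\lambda)$, $g=(b,\lambda)$ with $a,b$ linearly independent. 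Then $G$ is non abelian, $\lambda\in\Lambda_{G}\backslash\mathbb{R}$, $f\in G\backslash\mathcal{SR}_{n}$, and $a,b\in\Gamma_{G}$, so the literal $E_{G}\supseteq Aff(\{0,a,b\})=\mathbb{C}^{2}$; yet the line $\Delta=\mathbb{C}(b-a)+a$ is closed and $G$-invariant (Lemma \ref{L:222}), so $\overline{G(a)}\subseteq\Delta\neq\mathbb{C}^{2}$. Thus $(1)(i)$ holds only under the directional reading $E_{G}=Aff(\Gamma_{G})+vect(G_{1}(0))$, which equals $a_{0}+V$ precisely by your commutator fact; that reading is what makes Lemmas \ref{L:1}(iii) and \ref{L:1111} true, and establishing this identification between $E_{G}$ and the translation directions is the actual content that the paper's machinery labors over. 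Until you state and prove $E_{G}=a_{0}+V$ (after repairing the definition), the heart of $(1)(i)$ --- and hence of $(1)(ii)$, which you correctly reduce to it --- is missing.

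Two further points. In your first trichotomy case, the ratio of modulus $\neq 1$ and the non-real ratio need not be the same element of $\Lambda_{G}$; the identity-component argument must be run inside $W\cap\mathbb{C}v$ using invariance under all of $\Lambda_{G}$ (routine, but not what your sentence says). More seriously, the point you defer in $(2)(ii)$ cannot be ``pinned down'': if $G\subseteq\mathcal{S}_{3}\mathcal{R}_{n}$ is generated by $(0,e^{2i\pi/3})$ and $(1,e^{2i\pi/3})$, then every element of $G$ has ratio in $\{1,e^{\pm 2i\pi/3}\}$, so $\overline{G(z)}\subseteq\{1,e^{\pm 2i\pi/3}\}z+\overline{G(0)}$, which for generic $z$ does not contain $-z+\overline{G(0)}$; so $\Lambda_{G}=F_{i}$ can fail, and the product formula with the full $F_{i}$ needs either an extra argument forcing $\Lambda_{G}=F_{i}$ or a corrected statement (Lemma \ref{L:030}(i) asserts the equality but does not prove it). Your instinct to isolate that point was sound; deferring it leaves part $(2)(ii)$ open as well.
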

\medskip

\begin{cor}\label{C:1}Under notations of Theorem ~\ref{T:1}. One has:\\
(1)  If
$G\backslash\mathcal{SR}_{n}\neq\emptyset$ and $U\neq\emptyset$, then:
\begin{itemize}
   \item [(i)] Every orbit in $U$ is minimal in
  $U$.
   \item [(ii)]   If $G\backslash \mathcal{R}_{n}\neq\emptyset$, then $E_{G}$  is a minimal set of \ $G$  in \
$\mathbb{C}^{n}$ \ contained in the closure of every orbit of $G$.
  \item [(iii)] All orbits in $U$ are homeomorphic.
\end{itemize}
$(2)$ If
$G\subset\mathcal{SR}_{n}$, then  $G$ has a dense orbit if and only if $\overline{G_{1}(0)}=\mathbb{C}^{n}$.
\end{cor}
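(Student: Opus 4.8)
The plan is to read off everything from the orbit-closure formulas of Theorem~\ref{T:1}. Write $E_G = V + a$ where $V$ is the (complex) direction subspace, and record two standing facts about the ratio group: since $\Lambda_G$ is a subgroup of $\mathbb{C}^*$ (Lemma~\ref{L:00}), its closure $\overline{\Lambda_G}$ is a closed subsemigroup of $\mathbb{C}$ whose nonzero part is a subgroup of $\mathbb{C}^*$, so every nonzero $\lambda \in \overline{\Lambda_G}$ has $\lambda^{-1} \in \overline{\Lambda_G}$; and because $V$ is a complex subspace, $\mu V = V$ for all $\mu \in \overline{\Lambda_G}\setminus\{0\}$. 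For part (1)(i) I fix $z \in U$ and $w \in \overline{G(z)} \cap U$. The inclusion $\overline{G(w)} \subseteq \overline{G(z)}$ is automatic because orbit closures are closed and $G$-invariant. For the converse, Theorem~\ref{T:1}(1)(ii) lets me write $w - a = \lambda(z-a) + v$ with $\lambda \in \overline{\Lambda_G}$ and $v \in V$; since $w \notin E_G$ forces $\lambda \neq 0$, I invert to get $z - a = \lambda^{-1}(w-a) - \lambda^{-1}v \in \overline{\Lambda_G}(w-a) + V$, i.e. $z \in \overline{G(w)}$, whence $\overline{G(z)} = \overline{G(w)}$ and the orbit is minimal in $U$.

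For (1)(ii), the hypothesis $G \setminus \mathcal{R}_n \neq \emptyset$ yields $\lambda \in \Lambda_G$ with $|\lambda| \neq 1$; after replacing $\lambda$ by $\lambda^{-1}$ its powers tend to $0$, so $0 \in \overline{\Lambda_G}$ and therefore $E_G = 0\cdot(z-a) + E_G \subseteq \overline{G(z)}$ for every $z$. Together with Theorem~\ref{T:1}(1)(i), which gives $\overline{G(w)} = E_G$ for each $w \in E_G$, this shows $E_G$ is a minimal set lying in the closure of every orbit. For (1)(iii) I would first verify that $G$ acts freely on $U$: a nontrivial translation has no fixed point, and any $f \in G \setminus \mathcal{T}_n$ has $\mathrm{Fix}(f) \subseteq \Gamma_G \subseteq E_G$, so nothing but the identity fixes a point of $U$. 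Splitting $G$ over the cosets of the normal subgroup $G_1$ (indexed by $\Lambda_G$), and writing $g_\zeta(w) = \zeta w + b_{g_\zeta}$ for a representative of the coset with ratio $\zeta$, gives $G(z) = \bigsqcup_{\zeta \in \Lambda_G}(\zeta z + C_\zeta)$ with $C_\zeta = b_{g_\zeta} + \zeta G_1(0)$ independent of $z$ (the union is disjoint by freeness). The candidate homeomorphism $\Phi \colon G(z) \to G(z')$, $f(z) \mapsto f(z')$, then acts on the piece of ratio $\zeta$ as the translation by $\zeta(z'-z)$.

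For part (2), Theorem~\ref{T:1}(2) gives $G \subseteq \mathcal{S}_i\mathcal{R}_n$ and $\overline{G(z)} = F_i z + \overline{G(0)}$ for all $z$, with $F_i$ and $\Lambda_G$ finite. The same coset splitting at $z = 0$ gives $\overline{G(0)} = \bigcup_{\zeta \in \Lambda_G}(b_{g_\zeta} + \zeta\,\overline{G_1(0)})$, a finite union. If $\overline{G_1(0)} = \mathbb{C}^n$ then each term is $\mathbb{C}^n$, so $\overline{G(0)} = \mathbb{C}^n$ and hence $\overline{G(z)} = \mathbb{C}^n$ for every $z$: every orbit is dense. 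Conversely, a dense orbit means $F_i z + \overline{G(0)} = \mathbb{C}^n$ for some $z$, a finite union of translates of $\overline{G(0)}$; by Baire one translate, hence $\overline{G(0)}$, has nonempty interior, and applying Baire once more to the finite union above, $\overline{G_1(0)}$ has nonempty interior. Since $G_1(0)$ is a subgroup of $(\mathbb{C}^n,+)$, its closure is a closed subgroup of $\mathbb{R}^{2n}$ with an interior point, hence open, hence all of $\mathbb{C}^n$.

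The one genuinely delicate point is the bicontinuity of $\Phi$ in (1)(iii). On each piece $\Phi$ is a translation, but a convergent sequence in $G(z)$ may visit infinitely many pieces, and continuity requires that the ratios $\zeta_n \in \Lambda_G$ of its terms converge to the ratio of the limit — equivalently, that the orbit map $G \to G(z)$ is an embedding and not merely a continuous bijection. This cannot be extracted from convergence of a single orbit point, so it is where the real work lies: one must use the closed-subsemigroup structure of $\overline{\Lambda_G}$ and the position of $z, z'$ off $E_G$ to control how the pieces $\zeta z + C_\zeta$ accumulate. Everything else reduces formally to Theorem~\ref{T:1} together with standard facts (the closure of a subgroup is a subgroup, the Baire category theorem, and the fact that a closed subgroup of $\mathbb{R}^N$ with nonempty interior is the whole group).
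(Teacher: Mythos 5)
Parts (1)(i), (1)(ii) and (2) of your proposal are correct. Your argument for (1)(i) — invert the nonzero ratio $\lambda$ inside $\overline{\Lambda_G}$ and use that $V$ is a complex subspace — is essentially the paper's Lemma~\ref{L:15}; (1)(ii) is exactly the paper's route (Lemma~\ref{L:00} gives $0\in\overline{\Lambda_G}$, then Theorem~\ref{T:1}(1)(ii)). For (2) you take a mildly different path: the paper first proves $G(0)=G_1(0)$ (Lemma~\ref{L:030}(ii)) and then only needs one application of Baire, whereas you keep $G(0)$ as a finite union of translates $b_{g_\zeta}+\zeta G_1(0)$ and apply Baire twice; both are sound, and your version has the small advantage of not needing the algebraic identity $G(0)=G_1(0)$.

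The genuine gap is (1)(iii). You define $\Phi\colon G(z)\to G(z')$, $f(z)\mapsto f(z')$, only on the orbit itself, and then explicitly concede that its bicontinuity ``is where the real work lies'' without doing that work — so the proposal does not prove (1)(iii). The missing idea is the paper's Lemma~\ref{L:16}: after conjugating so that $E_G$ is a vector subspace, for $z\in U$ the sum $H_z=\mathbb{C}z\oplus E_G$ is \emph{direct} (because $z\notin E_G$) and $G$-invariant, and by Lemma~\ref{L:020}(i) every $f\in G$ has the form $f(w)=\lambda w+f(0)$ with $f(0)\in E_G$; hence $f(z)=\lambda z+f(0)$ is precisely the direct-sum decomposition of the point $f(z)$. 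Consequently the map $\Phi\colon H_z\to H_y$, $\alpha z+v\mapsto \alpha y+v$, is a globally defined linear isomorphism of finite-dimensional spaces, hence a homeomorphism of $H_z$ onto $H_y$, and it satisfies $\Phi(f(z))=f(y)$ for every $f\in G$; its restriction to $G(z)$ is therefore automatically a homeomorphism onto $G(y)$. This dissolves exactly the point you got stuck on: if $f_n(z)\to f(z)$, then since the two projections of the direct sum $\mathbb{C}z\oplus E_G$ are continuous, the ratios $\lambda_n$ converge to $\lambda$ and $f_n(0)\to f(0)$, so $f_n(y)=\lambda_n y+f_n(0)\to f(y)$ — convergence of a single orbit point \emph{does} force convergence of the ratios, precisely because $z$ sits off $E_G$. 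No analysis of how the pieces $\zeta z+C_\zeta$ accumulate, and no freeness of the action, is needed.
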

\medskip

 \begin{cor} \label{C:2} Let $G$  be a non abelian subgroup of  $\mathcal{H}(n,
 \mathbb{C})$ with $\Lambda_{G}\backslash\mathbb{R}\neq\emptyset$ and $G\backslash \mathcal{SR}_{n}\neq \emptyset$,
  then $G$ has no discrete orbit.
  \end{cor}
\medskip

 \begin{cor}\label{C:3} Let $G$  be a non abelian subgroup of  $\mathcal{H}(n, \mathbb{C})$
 with $\Lambda_{G}\backslash\mathbb{R}\neq\emptyset$ and $G\backslash \mathcal{SR}_{n}\neq \emptyset$. Then the following assertions are equivalents:
\begin{itemize}
\item [(1)] $G$ has a dense orbit in  $\mathbb{C}^{n}$.
  \item [(2)] Every orbit
of  $U$  is dense in
$\mathbb{C}^{n}$.
  \item [(3)] $G$ satisfies one of the following:
\begin{itemize}
  \item [(i)] $E_{G}=\mathbb{C}^{n}$
  \item [(ii)] $dim(E_{G})=n-1$ and  $\overline{\Lambda_{G}}=\mathbb{C}$.
    \end{itemize}
\end{itemize}
\end{cor}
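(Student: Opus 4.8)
The plan is to prove the cycle of implications $(1)\Rightarrow(3)\Rightarrow(2)\Rightarrow(1)$. Since $G\backslash\mathcal{SR}_{n}\neq\emptyset$ and $\Lambda_{G}\backslash\mathbb{R}\neq\emptyset$, we are in case $(1)$ of Theorem~\ref{T:1}, which supplies the two orbit-closure formulas: $\overline{G(z)}=E_{G}$ for $z\in E_{G}$, and $\overline{G(z)}=\overline{\Lambda_{G}}(z-a)+E_{G}$ for $z\in U$, where $a\in E_{G}$ is one base point independent of $z$. Throughout I would write $E_{G}=a+W$ with $W$ the direction vector subspace of $E_{G}$, so that $\dim W=\dim E_{G}$, and I would use the dichotomy $U=\emptyset$ if and only if $E_{G}=\mathbb{C}^{n}$.

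For $(1)\Rightarrow(3)$, suppose $\overline{G(z_{0})}=\mathbb{C}^{n}$ for some $z_{0}$. If $z_{0}\in E_{G}$, the first formula gives $E_{G}=\overline{G(z_{0})}=\mathbb{C}^{n}$, which is $(3)(i)$. Otherwise $z_{0}\in U$, so $U\neq\emptyset$, hence $E_{G}\neq\mathbb{C}^{n}$ and $\dim E_{G}\leq n-1$. The second formula gives $\mathbb{C}^{n}=\overline{\Lambda_{G}}(z_{0}-a)+E_{G}\subset\mathbb{C}(z_{0}-a)+E_{G}$, and the right-hand side is an affine subspace of complex dimension at most $\dim E_{G}+1$; equality with $\mathbb{C}^{n}$ forces $\dim E_{G}\geq n-1$, hence $\dim E_{G}=n-1$. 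Projecting by the quotient map $\pi:\mathbb{C}^{n}\to\mathbb{C}^{n}/W\cong\mathbb{C}$, one has $\pi(E_{G})=\{\pi(a)\}$ and $\pi(\overline{G(z_{0})})=\overline{\Lambda_{G}}\,\pi(z_{0}-a)+\pi(a)$; since $z_{0}-a\notin W$ we have $\pi(z_{0}-a)\neq0$, so the surjectivity $\pi(\overline{G(z_{0})})=\mathbb{C}$ forces $\overline{\Lambda_{G}}=\mathbb{C}$, which is $(3)(ii)$.

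For $(3)\Rightarrow(2)$: if $(3)(i)$ holds then $U=\emptyset$ and $(2)$ holds vacuously. If $(3)(ii)$ holds, then for every $z\in U$ we have $z-a\notin W$ and $\overline{\Lambda_{G}}(z-a)=\mathbb{C}(z-a)$; since $\dim W=n-1$ and $z-a\notin W$ we get $\mathbb{C}(z-a)+W=\mathbb{C}^{n}$, whence $\overline{G(z)}=\mathbb{C}(z-a)+E_{G}=\mathbb{C}^{n}$, so every orbit in $U$ is dense. Finally $(2)\Rightarrow(1)$ is immediate: if $U\neq\emptyset$, any orbit through a point of $U$ is dense by $(2)$, while if $U=\emptyset$ then $E_{G}=\mathbb{C}^{n}$ and $\overline{G(z)}=E_{G}=\mathbb{C}^{n}$ for every $z$, so in either case $G$ has a dense orbit.

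The main obstacle is the precise linear-algebraic characterization underlying $(1)\Rightarrow(3)$: one must show that $\overline{\Lambda_{G}}(z-a)+E_{G}=\mathbb{C}^{n}$ (with $E_{G}\neq\mathbb{C}^{n}$) is equivalent to $\dim E_{G}=n-1$ together with $\overline{\Lambda_{G}}=\mathbb{C}$. The delicate points are the bookkeeping between the complex line $\mathbb{C}(z-a)$ and the affine subspace $E_{G}$, so that the dimension bound from the containment is sharp, and the passage to the quotient $\mathbb{C}^{n}/W$ that isolates the condition on $\overline{\Lambda_{G}}$; in particular one must check that $0\in\overline{\Lambda_{G}}$ is genuinely needed, i.e.\ that the condition is $\overline{\Lambda_{G}}=\mathbb{C}$ rather than merely $\overline{\Lambda_{G}}=\mathbb{C}^{*}$. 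I would also verify at the outset that the base point $a$ furnished by Theorem~\ref{T:1}(1)(ii) is the same for all $z\in U$, which is exactly what legitimizes the uniform conclusion in $(2)$.
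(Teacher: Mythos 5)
Your proof is correct. All three implications hold: the dimension count in $(1)\Rightarrow(3)$ (from $\mathbb{C}^{n}=\overline{\Lambda_{G}}(z_{0}-a)+E_{G}\subset\mathbb{C}(z_{0}-a)+E_{G}$), the quotient-by-$W$ argument isolating $\overline{\Lambda_{G}}=\mathbb{C}$, and the vacuous reading of $(2)$ when $U=\emptyset$ (which your $(2)\Rightarrow(1)$ step correctly compensates for by invoking Theorem~\ref{T:1}(1)(i) in that case) are all sound.

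However, your route differs from the paper's. The paper disposes of the corollary in one line, citing Theorem~\ref{T:1}, Corollary~\ref{C:1}, and the fact that $\overline{U}=\mathbb{C}^{n}$ when $U\neq\emptyset$; the intended mechanism for passing from one dense orbit to density of every orbit in $U$ is the minimality statement of Corollary~\ref{C:1}(1)(i): if $\overline{G(z_{0})}=\mathbb{C}^{n}$ with $z_{0}\in U$, then every $w\in U$ lies in $\overline{G(z_{0})}\cap U$, so minimality gives $\overline{G(w)}\cap U=U$, and $\overline{G(w)}\supset\overline{U}=\mathbb{C}^{n}$. You never use Corollary~\ref{C:1} or the density of $U$; instead you run the cycle $(1)\Rightarrow(3)\Rightarrow(2)\Rightarrow(1)$ entirely on the explicit orbit-closure formula of Theorem~\ref{T:1}(1)(ii), doing the linear algebra (the sharp bound $\dim E_{G}\geq n-1$ and the projection $\pi:\mathbb{C}^{n}\to\mathbb{C}^{n}/W$) by hand. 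What the paper's route buys is brevity, by reusing structural results already proved; what yours buys is self-containment and an explicit verification of the equivalence between density and the dichotomy in $(3)$, which the paper leaves entirely implicit --- indeed the paper's citation of Corollary~\ref{C:1} only really addresses $(1)\Leftrightarrow(2)$, and the bridge to $(3)$ still requires precisely the dimension-and-quotient computation you wrote out. One minor remark: your worry about distinguishing $\overline{\Lambda_{G}}=\mathbb{C}$ from $\overline{\Lambda_{G}}=\mathbb{C}^{*}$ is moot, since $\overline{\Lambda_{G}}$ is closed and any closed set containing $\mathbb{C}^{*}$ contains $\mathbb{C}$; alternatively, $0\in\overline{\Lambda_{G}}$ follows from Lemma~\ref{L:00} once $G\backslash\mathcal{R}_{n}\neq\emptyset$.
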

\medskip

\begin{thm}\label{TT:1} Let  $G$ be a non abelian  group generated by two affine rotations $R_{\theta}$ and $R'_{\theta'}$
of $\mathbb{R}^{2}$, having angle respectively $\theta$ and $\theta'$.
Then:\\ $(1)$  every orbit of $G$ is dense in $\mathbb{R}^{2}$ if and only if there is one of the following:\
\begin{itemize}
    \item [(i)] $\left(H_{2}\cup H_{3}\right)\backslash\{\theta,\theta'\}\neq\emptyset$.
    \item [(ii)] $\theta\in H_{i}$ and  $\theta'\in H_{j}$ with $i\neq j$, $i,j\in\{2,3\}$.
  \end{itemize}
$(2)$ every orbit of $G$ is closed and discrete in $\mathbb{R}^{2}$ if and only if  $G_{1}(0)$ is closed
and discrete with $\theta,\theta'\in H_{i}$ for some $i\in\{2,3\}$.
\end{thm}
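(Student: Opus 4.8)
The plan is to deduce Theorem~\ref{TT:1} from the main results by specializing to $n=1$ under the identification $\mathbb{R}^{2}\cong\mathbb{C}$, so that an affine rotation of angle $\theta$ becomes $z\mapsto e^{i\theta}(z-c)+c=(c,e^{i\theta})\in\mathcal{R}_{1}$ and $G=\langle R_{\theta},R'_{\theta'}\rangle\subset\mathcal{R}_{1}\subset\mathcal{H}(1,\mathbb{C})$. First I would record the elementary facts about the angle sets: $H_{2}=\frac{\pi}{2}\mathbb{Z}$ and $H_{3}=\frac{\pi}{3}\mathbb{Z}$ are subgroups of $(\mathbb{R},+)$ with $H_{2}\cap H_{3}=\pi\mathbb{Z}$, so each $\mathcal{S}_{i}\mathcal{R}_{1}$ is a group, whereas $H_{2}\cup H_{3}$ is not closed under addition (e.g.\ $\frac{\pi}{2}+\frac{\pi}{3}\notin H_{2}\cup H_{3}$). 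This yields the key dictionary: $G\subset\mathcal{SR}_{1}$ if and only if $\{\theta,\theta'\}\subset H_{2}$ or $\{\theta,\theta'\}\subset H_{3}$ (the forward direction uses Theorem~\ref{T:1}(2)(i); the converse uses that $H_{i}$ is a subgroup). Negating this gives exactly the disjunction of (i) and (ii): either some angle escapes $H_{2}\cup H_{3}$, or the two angles are genuinely split between $H_{2}$ and $H_{3}$. Thus part~(1) reduces to proving that every orbit is dense if and only if $G\not\subset\mathcal{SR}_{1}$.

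For the direction $G\not\subset\mathcal{SR}_{1}\Rightarrow$ density, I would first observe that non-abelianness forces the centers $a,b$ to be distinct: for homotheties $(a,\lambda),(b,\mu)$ the commutation relation reduces to $(1-\lambda)(1-\mu)(a-b)=0$, which (once $\theta,\theta'\notin2\pi\mathbb{Z}$) fails only when $a=b$. Hence $\Gamma_{G}\supset\{a,b\}$ contains two distinct points; since a proper affine subspace of $\mathbb{C}^{1}$ is a single point, this gives $E_{G}=Aff(\Gamma_{G}\cup G_{1}(0))=\mathbb{C}$. Moreover $G\not\subset\mathcal{SR}_{1}$ forces $\Lambda_{G}\backslash\mathbb{R}\neq\emptyset$ (otherwise $e^{i\theta},e^{i\theta'}\in\{\pm1\}$, i.e.\ $\theta,\theta'\in\pi\mathbb{Z}\subset H_{2}$). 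Theorem~\ref{T:1}(1)(i) then applies and yields $\overline{G(z)}=E_{G}=\mathbb{C}$ for every $z$, so every orbit is dense.

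The converse, $G\subset\mathcal{SR}_{1}\Rightarrow$ not every orbit dense, is where the main obstacle lies. Here Theorem~\ref{T:1}(2) gives $G\subset\mathcal{S}_{i}\mathcal{R}_{1}$ and $\overline{G(z)}=F_{i}z+\overline{G(0)}$, and by Corollary~\ref{C:1}(2) a dense orbit exists iff $\overline{G_{1}(0)}=\mathbb{C}$; so I must show $\overline{G_{1}(0)}\neq\mathbb{C}$. The crucial ingredient is a discreteness estimate of crystallographic type: conjugating by $T_{a}$ sends the generators to $(e^{i\theta},0)$ and $(e^{i\theta'},(1-e^{i\theta'})(b-a))$, and since the group law $(\lambda_{1},v_{1})(\lambda_{2},v_{2})=(\lambda_{1}\lambda_{2},\lambda_{1}v_{2}+v_{1})$ keeps every translation part inside $\mathbb{Z}[F_{i}](b-a)$, one obtains $G_{1}(0)\subset\mathbb{Z}[F_{i}](b-a)$. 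The decisive arithmetic fact is that $\mathbb{Z}[F_{2}]=\mathbb{Z}[i]$ and $\mathbb{Z}[F_{3}]=\mathbb{Z}[e^{i\pi/3}]$ are \emph{lattices} (discrete subrings) of $\mathbb{C}$ — precisely the restriction singling out the orders $4$ and $6$ — so $\mathbb{Z}[F_{i}](b-a)$ is discrete and $\overline{G_{1}(0)}\neq\mathbb{C}$. I expect this discreteness step to be the hardest part, and it is exactly the reason only $H_{2},H_{3}$ occur in the statement. Combining the two directions proves~(1).

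For part~(2) I would argue as follows. If every orbit is closed and discrete then, being countable, no orbit is dense, so by~(1) we have $G\subset\mathcal{SR}_{1}$, i.e.\ $\theta,\theta'\in H_{i}$ for some $i$; and since $G_{1}(0)\subset G(0)$ with $G(0)$ closed and discrete, its subset $G_{1}(0)$ inherits these properties. Conversely, assume $\theta,\theta'\in H_{i}$ and $G_{1}(0)$ closed and discrete. Then $G\subset\mathcal{S}_{i}\mathcal{R}_{1}$, and writing $G$ as the finite union of cosets $g_{k}G_{1}$ gives $G(0)=\bigcup_{k}g_{k}(G_{1}(0))$, a finite union of affine images of one discrete subgroup, hence closed and discrete. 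Theorem~\ref{T:1}(2)(ii) then yields $\overline{G(z)}=F_{i}z+G(0)$, a finite union of translates of a closed discrete set, which is again closed and discrete; as $G(z)$ is dense in the discrete set $\overline{G(z)}$ it already equals it, so every orbit is closed and discrete. The only subtlety to verify here is the uniform discreteness of these finite unions, which holds because all pieces are rotations and translates of the single lattice-like set $G_{1}(0)$.
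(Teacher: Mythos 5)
Your overall strategy is legitimate and non-circular (Theorem~\ref{T:1} and Corollary~\ref{C:1} are proved in Sections 4--6 without using Theorem~\ref{TT:1}), and your reading of conditions (i)/(ii) via the dictionary ``$G\subset\mathcal{SR}_{1}$ iff $\theta,\theta'$ lie in a common $H_{i}$'' is the right one. But there is a genuine gap: every place where you invoke Theorem~\ref{T:1}(2) or Corollary~\ref{C:1}(2) --- i.e.\ the entire direction ``$G\subset\mathcal{SR}_{1}\Rightarrow$ not all orbits dense'' in part (1), and the converse direction of part (2) --- you are using results whose standing hypothesis is $\Lambda_{G}\backslash\mathbb{R}\neq\emptyset$, and you never verify it there. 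You do verify it carefully in the density direction (where it is automatic), but in the discreteness direction it can genuinely fail: take $\theta=\theta'=\pi$ and distinct centers $a\neq b$. The two half-turns $z\mapsto -z+2a$ and $z\mapsto -z+2b$ generate a non abelian group (their commutator is a nontrivial translation) which is covered by Theorem~\ref{TT:1}, yet $\Lambda_{G}=\{1,-1\}\subset\mathbb{R}$, so neither Theorem~\ref{T:1}(2)(ii) nor Corollary~\ref{C:1}(2) may be cited for it. The same unchecked hypothesis sits behind your use of Theorem~\ref{T:1}(2)(i) in the dictionary, though there it is harmless since $\theta,\theta'\in\pi\mathbb{Z}$ makes the conclusion trivial.

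The gap is repairable with material you already have, and the repair makes your argument self-contained exactly where it currently leans on the big theorems. Your $\mathbb{Z}[F_{i}]$ computation shows that every element of $T_{-a}\circ G\circ T_{a}$ has the form $z\mapsto\lambda z+v$ with $\lambda\in F_{i}$ and $v\in\mathbb{Z}[F_{i}](b-a)$; hence $G(z)\subset F_{i}(z-a)+\mathbb{Z}[F_{i}](b-a)+a$ is contained in a finite union of translates of a rank-two lattice and is never dense --- no appeal to Corollary~\ref{C:1}(2) is needed. Likewise, since $G/G_{1}\cong\Lambda_{G}\subset F_{i}$ is finite, $G(z)=\bigcup_{k}g_{k}\left(z+G_{1}(0)\right)$ is a finite union of affine images of $z+G_{1}(0)$, hence closed and discrete whenever $G_{1}(0)$ is; this bypasses Theorem~\ref{T:1}(2)(ii) entirely and covers the half-turn case. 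With those substitutions your route is sound, and it differs from the paper's: the paper proves Theorem~\ref{TT:1} ``bottom-up'' from its Section 3 one-dimensional results (density via Theorem~\ref{T:001}/Corollary~\ref{CC:01}, the split-angle case via the explicit computation $\theta+\theta'\in\left(\frac{\pi}{6}+\pi\mathbb{Z}\right)\cup\left(\frac{5\pi}{6}+\pi\mathbb{Z}\right)\not\subset H_{2}\cup H_{3}$, and discreteness via the lattice invariance of Lemma~\ref{L:1b}), whereas you go ``top-down'' through Theorem~\ref{T:1}. Notably, your $\mathbb{Z}[F_{i}]$ argument is Lemma~\ref{L:1b} in ring-theoretic form, but strictly more general: it handles two distinct angles in the same $H_{i}$, a case the paper's citation of Lemma~\ref{L:1b} (stated only for two generators with equal angle, one centered at the origin) does not literally cover.
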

\medskip

\begin{rem} If $G$ is a non abelian subgroup of $\mathcal{H}(n, \mathbb{C})$ with
$\Lambda_{G}\subset\mathbb{R}$, then it can be considered as a subgroup of  $\mathcal{H}(2n, \mathbb{R})$, by identifying $\mathbb{C}^{n}$
 to $\mathbb{R}^{2n}$. So Theorem ~\ref{T:1} has the same form of Theorem1.1 in the real case (see ~\cite{AYN}).
\end{rem}
\medskip

 \begin{cor} \label{C:4} If $G$ is a non abelian subgroup of  $\mathcal{H}(n,
 \mathbb{C})$ generated by n-2 affine maps, it has no dense orbit.
  \end{cor}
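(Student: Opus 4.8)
The plan is to reduce the statement to a single dimension count — that a group generated by $m$ affine maps has $\dim_{\mathbb{C}}E_{G}\le m-1$ — and then to feed this bound into the density criteria already established in Corollaries ~\ref{C:1} and ~\ref{C:3}. Since $G$ is non abelian it cannot be generated by translations alone (translations commute), so at least one generator is a genuine homothety $g_{1}=(a_{1},\lambda_{1})$ with $\lambda_{1}\neq 1$. Conjugating $G$ by the translation $T_{-a_{1}}$ is a homeomorphism of $\mathbb{C}^{n}$, hence preserves the existence of a dense orbit; it fixes $\Lambda_{G}$, sends $E_{G}$ to $E_{G}-a_{1}$, and carries $\mathcal{SR}_{n}$ into itself, so it preserves the trichotomy on which the case analysis below rests. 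I may therefore assume $a_{1}=0$, so that $0\in\Gamma_{G}$ and hence $E_{G}=vect(\Gamma_{G}\cup G_{1}(0))$ is a $\mathbb{C}$-vector subspace. I write the generators as $p$ homotheties with fixed points $0=a_{1},a_{2},\dots,a_{p}$ and $q$ translations with vectors $t_{1},\dots,t_{q}$, where $p+q=m$, and set $W:=vect(a_{2},\dots,a_{p},t_{1},\dots,t_{q})$, a subspace of dimension at most $m-1$.

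The heart of the argument is to show $E_{G}=W$. For $f\in G$ put $b(f):=f(0)$; the composition rule gives the cocycle identity $b(fg)=\lambda_{f}\,b(g)+b(f)$, where $\lambda_{f}\in\Lambda_{G}\subset\mathbb{C}^{*}$. On generators one computes $b(g_{i})=(1-\lambda_{i})a_{i}\in W$ and $b(g_{j})=t_{j}\in W$, and likewise for inverses. Since $W$ is a complex subspace it satisfies $\lambda W=W$ for every $\lambda\in\mathbb{C}^{*}$, so the cocycle identity propagates membership in $W$ through arbitrary words in the generators. Hence $b(f)\in W$ for all $f\in G$: in particular $G_{1}(0)\subset W$, and every $f=(c,\lambda)\in G\backslash\mathcal{T}_{n}$ has fixed point $c=b(f)/(1-\lambda)\in W$, so $\Gamma_{G}\subset W$. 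With the trivial reverse inclusion $W\subset E_{G}$ this gives $E_{G}=W$ and therefore $\dim_{\mathbb{C}}E_{G}\le m-1$.

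It remains to combine $\dim_{\mathbb{C}}E_{G}\le (n-2)-1=n-3$ with the density results in the three cases. If $\Lambda_{G}\backslash\mathbb{R}\neq\emptyset$ and $G\backslash\mathcal{SR}_{n}\neq\emptyset$, Corollary ~\ref{C:3} shows a dense orbit would force $E_{G}=\mathbb{C}^{n}$ or $\dim E_{G}=n-1$, both impossible since $n-3<n-1$. If $\Lambda_{G}\backslash\mathbb{R}\neq\emptyset$ but $G\subset\mathcal{SR}_{n}$, Corollary ~\ref{C:1}(2) says a dense orbit would force $\overline{G_{1}(0)}=\mathbb{C}^{n}$, impossible because $G_{1}(0)\subset E_{G}$ with $E_{G}$ a proper closed affine subspace. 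Finally, if $\Lambda_{G}\subset\mathbb{R}$, the Remark embeds $G$ in $\mathcal{H}(2n,\mathbb{R})$; the same cocycle computation, now over $\mathbb{R}$ since the ratios are real, bounds $\dim_{\mathbb{R}}E_{G}\le m-1=n-3$, far below the codimension-one threshold ($\dim_{\mathbb{R}}E_{G}\ge 2n-1$) that the real analogue of Corollary ~\ref{C:3} from ~\cite{AYN} requires for a dense orbit.

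I expect the main obstacle to be the middle step: verifying that no element of $G$, however long a word in the generators, can produce a fixed point or a translation vector escaping $W$. This is exactly where the $\mathbb{C}^{*}$-invariance of the complex subspace $W$ does the work, turning the cocycle identity into a stable inductive invariant; the analogous real argument in the last case must be invoked with matching care, since there only real scalars preserve the relevant real span.
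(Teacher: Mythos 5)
Your proof is correct, and at the top level it follows the same skeleton as the paper's: bound $\dim(E_{G})$ by the number of generators, then feed that bound into the orbit-closure description to exclude density. But you execute the key step by a genuinely different and more careful route. The paper obtains the (weaker, still sufficient) bound $\dim(E_{G})\leq n-2$ in one line, by writing every generator as $f_{i}=(a_{i},\lambda_{i})$ and citing Lemma~\ref{L:4} to get $E_{G}\subset Vect(a_{1},\dots,a_{n-2})$; this tacitly assumes no generator is a translation (translations are not of the form $(a,\lambda)$), and the statement it really leans on is Lemma~\ref{L:1111}, whose hypotheses require the generators to lie outside $\mathcal{SR}_{n}$. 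Your cocycle argument --- $b(fg)=\lambda_{f}b(g)+b(f)$ propagated through words via $\lambda W=W$ --- proves the sharper bound $\dim(E_{G})\leq n-3$ while treating homothety and translation generators uniformly and with no $\mathcal{SR}_{n}$ restriction, so it is both more elementary and more robust. You also explicitly dispatch the case $\Lambda_{G}\subset\mathbb{R}$, which the paper's proof silently skips: its step ``By Theorem~\ref{T:1} there are two cases'' is only licensed when $\Lambda_{G}\backslash\mathbb{R}\neq\emptyset$, and the remaining case has to be routed through the Remark and \cite{AYN}, exactly as you do. One simplification you missed: once you know $E_{G}=W$ and $b(f)\in W$ for all $f\in G$, you have directly $G(z)\subset\Lambda_{G}z+W\subset\mathbb{C}z+E_{G}$ for every $z$, a closed complex subspace of dimension at most $n-2<n$; this excludes a dense orbit in all cases at once, so the appeals to Corollaries~\ref{C:1} and~\ref{C:3} and to the real-case results of \cite{AYN} are not actually needed.
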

\medskip

This paper is organized as follows: In Section 2, we introduce some preliminaries Lemmas. In Section 3,
we characterize the case $n=1$ and  we prove Theorem ~\ref{TT:1}.
 Section 4 is devoted to given some results in the case $G\backslash\mathcal{SR}_{n}\neq\emptyset$. In Section 5,
  we characterize any subgroup of $\mathcal{SR}_{n}$. In Section 6, we prove
Theorem ~\ref{T:1}, Corollaries ~\ref{C:1}, ~\ref{C:2}, ~\ref{C:3} and ~\ref{C:4}. In Section 7, we give three examples.
\medskip

\section{\textbf{Preliminaries Lemmas}}

\begin{lem}\label{L:6}\ \begin{itemize}
\item [(i)]Let $f=(a,\alpha),\ g=(b, \beta)\in\mathcal{H}(n, \mathbb{C})\backslash\mathcal{T}_{n}$ then
$f\circ g=g\circ f$ if and only if $a=b$ or $\alpha=1$ or
$\beta=1$.\
\item [(ii)] If $Fix(f)=Fix(g)$ then $f\circ g=g\circ f$.\
\item [(iii)]  Let $G$ be a non abelian subgroup of $\mathcal{H}(n, \mathbb{C})$, then there exist
$f=(a,\alpha)$, $g=(b, \beta)\in G\backslash\mathcal{T}_{n}$ such
that $a\neq b$.
\end{itemize}
\end{lem}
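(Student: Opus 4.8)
The plan is to handle the three parts in order, with $(i)$ a direct computation, $(ii)$ a short consequence, and $(iii)$ the only part needing an idea. For $(i)$, I would write both maps in the linear-plus-constant form: since $f=(a,\alpha)$ means $f(z)=\alpha(z-a)+a=\alpha z+(1-\alpha)a$ and likewise $g(z)=\beta z+(1-\beta)b$, I would compute $f\circ g$ and $g\circ f$ explicitly. Both have the same linear part $\alpha\beta$, so the two maps agree if and only if their constant terms agree; subtracting and collecting terms, the commuting condition collapses to $(1-\alpha)(1-\beta)(a-b)=0$. Reading off the vanishing factors gives exactly $a=b$ or $\alpha=1$ or $\beta=1$. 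Under the standing hypothesis $f,g\notin\mathcal{T}_{n}$ one has $\alpha,\beta\neq1$, so this reduces to $a=b$; but the factored identity is what I would record, since it is the form $(iii)$ ultimately uses. I expect no obstacle here beyond bookkeeping.

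For $(ii)$, I would use that a non-translation $h=(c,\gamma)$ has the single fixed point $\mathrm{Fix}(h)=\{c\}$, as noted in the introduction. If $\mathrm{Fix}(f)=\mathrm{Fix}(g)$ is a single point, then $f,g\notin\mathcal{T}_{n}$ share the same center $a=b$, and $(i)$ gives $f\circ g=g\circ f$. The degenerate cases where the common fixed-point set is empty (both $f,g$ are nontrivial translations) or all of $\mathbb{C}^{n}$ (both are the identity) are immediate, since translations commute and the identity commutes with everything. So $(ii)$ is essentially a corollary of $(i)$ together with these boundary observations.

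The real content is $(iii)$, which I would prove by contradiction. Since $G$ is non abelian, $G\setminus\mathcal{T}_{n}\neq\emptyset$ (translations commute pairwise), so I fix some $f=(a,\alpha)\in G\setminus\mathcal{T}_{n}$ and suppose, for contradiction, that every element of $G\setminus\mathcal{T}_{n}$ has center $a$. I would then split on $G_{1}=G\cap\mathcal{T}_{n}$. If $G_{1}=\{\mathrm{id}\}$, then every element of $G$ is either the identity or a non-translation with fixed point $a$; hence any two elements share the fixed point $a$ and commute by $(ii)$, forcing $G$ abelian and contradicting the hypothesis. If instead $G_{1}\neq\{\mathrm{id}\}$, I pick a nontrivial translation $t\in G_{1}$, $t(z)=z+c$ with $c\neq0$; then $t\circ f\in G\setminus\mathcal{T}_{n}$ still has ratio $\alpha\neq1$, and solving $t\circ f(z)=z$ yields the center $a+\frac{c}{1-\alpha}\neq a$, contradicting the assumption that all non-translations have center $a$. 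Either branch gives a contradiction, so there exist $f=(a,\alpha)$ and $g=(b,\beta)$ in $G\setminus\mathcal{T}_{n}$ with $a\neq b$.

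The step I expect to be the main obstacle is precisely the link between the hypothesis and the conclusion in $(iii)$: that non abelianness genuinely forces a non-translation, and that one cannot have all non-translations collapse onto a single center. The first point is secured by the remark that translations commute, and the second is exactly the leverage provided by the $G_{1}=\{\mathrm{id}\}$ case (via $(ii)$) together with the explicit center computation for $t\circ f$. Once those two computations are pinned down, the rest is routine.
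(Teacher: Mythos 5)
Your proposal is correct, and parts $(i)$ and $(ii)$ coincide with the paper's argument: the same computation reducing commutation to $(a-b)(\alpha-1)(\beta-1)=0$, and the same case split on whether the common fixed-point set is empty or a single point (the paper handles the single-point case by conjugating both maps to scalar multiples of the identity rather than citing $(i)$, a cosmetic difference). Where you genuinely diverge is $(iii)$: the paper disposes of it in one line, ``the proof of (iii) results from (ii) since $G$ is non abelian,'' which unpacks to: take non-commuting $f,g\in G$, apply the contrapositive of $(ii)$ to get $\mathrm{Fix}(f)\neq\mathrm{Fix}(g)$, and read off distinct centers. That argument is complete only when both $f$ and $g$ are non-translations; it says nothing when the non-commutativity is witnessed by a translation paired with a homothety --- and this case is unavoidable, since a nontrivial translation $T_{c}$ never commutes with any $f=(a,\alpha)$, $\alpha\neq1$, so a group can be non abelian precisely because of such pairs. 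Your proof by contradiction covers exactly this: in the branch $G_{1}\neq\{\mathrm{id}\}$ you form $t\circ f$ and compute its center $a+\frac{c}{1-\alpha}\neq a$, producing the second non-translation that the paper's one-liner implicitly needs but never constructs. So your route is not just different in structure (contradiction with a case split on $G_{1}$ versus direct citation of $(ii)$); it actually fills a gap the paper leaves open, at the modest cost of one extra explicit fixed-point computation.
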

\medskip

\begin{proof} (i) $f\circ g(x)=g\circ
f(x)$, for every $x\in \mathbb{R}^{n}$, if and only if
\begin{align*}
 \lambda(\mu(x-b)+b-a)+a& =\mu(\lambda(x-a)+a-b)+b,\\
\Longleftrightarrow\ \ \ \ \ \ \ \  \ \ \  \ -\lambda\mu
b+\lambda(b-a)+a&  =-\mu\lambda a+\mu(a-b)+b,\\
\Longleftrightarrow\ \ \ \ \ \ \ \
(a-b)(\lambda\mu-\lambda-\mu+1)& =0\\
 \Longleftrightarrow \ \ \ \ \ \ \ \ \ \  \ (a-b)(\lambda-1)(\mu-1)&
 =0.
\end{align*} So the results follows.
\\
\\
(ii) There are two cases:\
\\
$\bullet$ If $Fix(f)=Fix(g)=\emptyset$ then $f=T_{a}$ and $g=T_{b}$ for some $a,b\in \mathbb{R}^{n}$, so $f\circ g=g\circ f$.
\\
$\bullet$ If $Fix(f)=Fix(g)=a$, then $T_{a}\circ f \circ T_{-a}=\lambda id$ and $T_{a}\circ g \circ T_{-a}=\mu id$,
  for some $\lambda,\mu\in \mathbb{C}^{*}$, so $f\circ g=g\circ f$.\
\\
The proof of (iii) results from (ii) since $G$ is non abelian.
\end{proof}
\medskip

\begin{lem} \label{L:4}$($\cite{AYN}, Lemma 2.3$)$ Let   $\mathcal{B}=(a_{1},\dots,a_{n})$  be a basis of  $\mathbb{C}^{n}$.
 Then  $\mathcal{A}ff(\mathcal{B})$  is defined by
 $$\mathcal{A}ff(\mathcal{B}):=\left\{z=\underset{k=1}{\overset{n}{\sum}}\alpha_{k}a_{k}:
  \ \alpha_{k}\in \mathbb{C},\
  \underset{k=1}{\overset{n}{\sum}}\alpha_{k}=1\right\}.$$
\end{lem}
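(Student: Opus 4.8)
The plan is to prove the stated description by double inclusion, after identifying the right-hand set as an affine hyperplane. Write $S := \left\{z=\sum_{k=1}^{n}\alpha_{k}a_{k}:\ \alpha_{k}\in\mathbb{C},\ \sum_{k=1}^{n}\alpha_{k}=1\right\}$ for the candidate set; the goal is $\mathcal{A}ff(\mathcal{B})=S$.

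First I would exploit that $\mathcal{B}$ is a basis in order to realize $S$ as the level set of a linear functional. Since every $z\in\mathbb{C}^{n}$ has \emph{unique} coordinates $(\alpha_{1},\dots,\alpha_{n})$ relative to $\mathcal{B}$, the map $\phi:\mathbb{C}^{n}\to\mathbb{C}$ defined by $\phi(z)=\sum_{k=1}^{n}\alpha_{k}$ is a well-defined linear functional, and it is nonzero because $\phi(a_{1})=1$. Then $S=\phi^{-1}(1)=a_{1}+\ker\phi$, so $S$ is an affine subspace of dimension $n-1$. I would note here that the uniqueness of the coordinate representation is the only place where the basis hypothesis is genuinely used: it is precisely what makes the coefficient-sum $\phi$ well-defined.

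Next I would verify $S\supseteq\mathcal{B}$ and deduce one inclusion. Each $a_{j}$ is the affine combination with coefficient $1$ on $a_{j}$ and $0$ elsewhere, so $a_{j}\in S$ for every $j$. Since $\mathcal{A}ff(\mathcal{B})$ is by definition the smallest affine subspace containing $\mathcal{B}$, and $S$ is an affine subspace containing $\mathcal{B}$, minimality gives $\mathcal{A}ff(\mathcal{B})\subseteq S$. For the reverse inclusion I would invoke the standard fact that an affine subspace is closed under affine combinations of its own points: if $H=b+F$ with $F$ a vector subspace and $x_{1},\dots,x_{m}\in H$ satisfy $\sum c_{i}=1$, then $\sum c_{i}x_{i}=b+\sum c_{i}(x_{i}-b)\in H$. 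Applying this to $H=\mathcal{A}ff(\mathcal{B})$ with the points $a_{1},\dots,a_{n}$ shows that every affine combination $\sum\alpha_{k}a_{k}$ with $\sum\alpha_{k}=1$ lies in $\mathcal{A}ff(\mathcal{B})$, i.e. $S\subseteq\mathcal{A}ff(\mathcal{B})$. Combining the two inclusions yields $\mathcal{A}ff(\mathcal{B})=S$.

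This argument is essentially routine and I do not expect a serious obstacle; the only point demanding attention is the well-definedness of the coefficient-sum functional, which is exactly why the hypothesis that $\mathcal{B}$ is a basis enters. As an alternative to the closure lemma one could run an explicit induction on the number of points, but the functional description above is cleaner and makes the dimension count ($\dim S=n-1$) transparent.
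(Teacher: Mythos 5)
Your proof is correct. Note that the paper itself gives no argument for this statement: it is quoted verbatim from the reference [AYN, Lemma 2.3], so there is no in-paper proof to compare against. Your double-inclusion argument is complete and sound: realizing the candidate set $S$ as the level set $\phi^{-1}(1)$ of the coefficient-sum functional (well-defined exactly because $\mathcal{B}$ is a basis) shows $S$ is an affine subspace containing $\mathcal{B}$, giving $\mathcal{A}ff(\mathcal{B})\subseteq S$ by minimality, while closure of any affine subspace $b+F$ under affine combinations gives the reverse inclusion. This is the standard argument one would expect, and it correctly isolates where the basis hypothesis is used.
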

\medskip

\begin{rem}\label{r:3} As consequence of Lemma ~\ref{L:4}, if $E_{G}$ contains $a,a_{1},\dots,a_{n}$  such that $(a_{1},\dots,a_{n})$
 is a basis of  $\mathbb{C}^{n}$ and
 $a=\underset{k=1}{\overset{n}{\sum}}\alpha_{k}a_{k}$ with
 $\underset{k=1}{\overset{n}{\sum}}\alpha_{k}\neq 1.$ Then
 $E_{G}=\mathbb{C}^{n}$.
\end{rem}
\medskip

\begin{lem}\label{L:00} Let $G$ be a non abelian subgroup of $\mathcal{H}(n, \mathbb{C})$.
 Then $\Lambda_{G}$ is a subgroup of $\mathbb{C}^{*}$. Moreover,  $0\in \overline{\Lambda_{G}}$ if $G\backslash \mathcal{R}_{n}\neq\emptyset$.
 \end{lem}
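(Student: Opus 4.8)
The plan is to realize $\Lambda_{G}$ as the image of a group homomorphism and then exploit closure under integer powers. First I would check that the map $\pi:\mathcal{H}(n,\mathbb{C})\To\mathbb{C}^{*}$ sending each $f:z\mapsto \lambda z+b$ to its ratio $\lambda$ is a group homomorphism. This reduces to a one-line composition computation: if $f:z\mapsto \lambda z+b$ and $g:z\mapsto \mu z+c$, then $f\circ g:z\mapsto \lambda\mu z+(\lambda c+b)$, so the ratio of $f\circ g$ is the product of the ratios. Since $\pi$ is a homomorphism and $\Lambda_{G}=\pi(G)$, it is the image of a subgroup, hence a subgroup of $\mathbb{C}^{*}$; in particular $\Lambda_{G}$ is closed under inversion and under taking arbitrary integer powers of its elements.

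For the second assertion, the key observation is that $\mathcal{R}_{n}$ is precisely the set of $f\in\mathcal{H}(n,\mathbb{C})$ whose ratio lies on $S^{1}$. Indeed, every affine rotation $z\mapsto e^{i\theta}z+a$ has ratio $e^{i\theta}\in S^{1}$, and a direct composition check shows that $\mathcal{R}_{n}$ is already closed under composition and inversion, so it coincides exactly with $\{f:\ |\pi(f)|=1\}$. Consequently the hypothesis $G\backslash\mathcal{R}_{n}\neq\emptyset$ produces an element $f_{0}\in G$ whose ratio $\lambda_{0}=\pi(f_{0})$ satisfies $|\lambda_{0}|\neq 1$.

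Finally I would pass to powers. Since $\Lambda_{G}$ is a subgroup containing $\lambda_{0}$, it contains $\lambda_{0}^{k}$ for every $k\in\mathbb{Z}$. If $|\lambda_{0}|>1$ then $|\lambda_{0}^{-k}|=|\lambda_{0}|^{-k}\To 0$, while if $|\lambda_{0}|<1$ then $|\lambda_{0}^{k}|\To 0$; in either case the corresponding sequence of powers lies in $\Lambda_{G}$ and converges to $0$ in $\mathbb{C}$. Hence $0$ is a limit point of $\Lambda_{G}$, that is $0\in\overline{\Lambda_{G}}$. I do not expect a serious obstacle; the only point requiring care is the identification of $\mathcal{R}_{n}$ with the maps of unit ratio, since that is what converts the group-theoretic hypothesis $G\backslash\mathcal{R}_{n}\neq\emptyset$ into the analytic condition $|\lambda_{0}|\neq 1$ that drives the limit argument.
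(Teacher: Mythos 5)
Your proposal is correct and follows essentially the same route as the paper: establish the subgroup property from the composition rule for ratios (the paper verifies $\lambda/\mu\in\Lambda_{G}$ directly via $f\circ g^{-1}$, which is the same computation you package as the homomorphism $\pi$), then use that $G\backslash\mathcal{R}_{n}\neq\emptyset$ yields a ratio $\lambda_{0}$ with $|\lambda_{0}|\neq 1$ whose integer powers converge to $0$. Your explicit remark that $\mathcal{R}_{n}$ consists exactly of the maps with unit-modulus ratio is the same identification the paper uses implicitly when it asserts $\Lambda_{G}\backslash S^{1}\neq\emptyset$.
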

 \medskip

\begin{proof} One has $1\in \Lambda_{G}$ since $id_{\mathbb{R}^{n}}\in G$. Let $\lambda, \mu\in \Lambda_{G}$
and $f,g\in G$ defined by $f:x\longmapsto \lambda x+a$, and
$g:x\longmapsto \mu x+b$, $x\in \mathbb{R}^{n}$, so $f\circ
g^{-1}(x)=\frac{\lambda}{\mu}x-\frac{\lambda b}{\mu}+a$.
 Hence $\frac{\lambda }{\mu}\in \Lambda_{G}$.\ Moreover, $\Lambda_{G}\backslash S^{1}\neq\emptyset$, if $G\backslash \mathcal{R}_{n}\neq\emptyset$.
 So $\underset{m\to \pm\infty}{lim}\alpha^{m}=0$, for any $\alpha\in\Lambda_{G}\backslash S^{1}$. It follows that $0\in \overline{\Lambda_{G}}$.
\end{proof}
\
\\
\begin{lem}\label{L:1} Let  $G$ be a non abelian subgroup of
$\mathcal{H}(n,\mathbb{C})$ with  $G\backslash
\mathcal{SR}_{n}\neq\emptyset$ and
$\Lambda_{G}\backslash\mathbb{R}\neq\emptyset$.  Then:
\begin{itemize}
\item[(i)] $\Lambda_{G}\backslash(F_{2}\cup
F_{3}\cup\mathbb{R})\neq\emptyset$.
  \item [(ii)] if $E_{G}$ is a vector space, there exist
$f_{1}=(a_{1},\lambda),\dots,f_{p}=(a_{p},\lambda)\in G\backslash \mathcal{SR}_{n}$  such that
$\lambda\in\Lambda_{G}\backslash\mathbb{R}$ and $(a_{1},\dots,a_{p})$ is a basis of $E_{G}$.
  \item [(iii)] if $G'=T_{-a}\circ G \circ T_{a}$ for some  $a\in\Gamma_{G}$, then $E_{G'}=T_{-a}(E_{G})$ and $\Lambda_{G'}=\Lambda_{G}$.
 \end{itemize}
\end{lem}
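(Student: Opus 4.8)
The plan is to treat the three parts separately; parts (i) and (iii) are short, while part (ii) carries the real content. Throughout I use that membership of an element $(a,\alpha)$ in $\mathcal{S}_{i}\mathcal{R}_{n}$ depends only on its ratio $\alpha$, so that $(a,\alpha)\in G\backslash\mathcal{SR}_{n}$ is equivalent to $\alpha\notin F_{2}\cup F_{3}$.

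For (i) I would argue multiplicatively. From $G\backslash\mathcal{SR}_{n}\neq\emptyset$ I get $\mu\in\Lambda_{G}$ with $\mu\notin F_{2}\cup F_{3}$, and from $\Lambda_{G}\backslash\mathbb{R}\neq\emptyset$ I get $\lambda\in\Lambda_{G}\backslash\mathbb{R}$. If $\mu\notin\mathbb{R}$ then $\mu$ is already a witness, and if $\lambda\notin F_{2}\cup F_{3}$ then $\lambda$ is. Otherwise $\lambda$ is a non-real root of unity, so $|\lambda|=1$, while $\mu\in\mathbb{R}\backslash\{1,-1\}$ because $(F_{2}\cup F_{3})\cap\mathbb{R}=\{1,-1\}$, whence $|\mu|\neq 1$. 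Then $\lambda\mu\in\Lambda_{G}$ has $|\lambda\mu|=|\mu|\neq 1$, so $\lambda\mu\notin F_{2}\cup F_{3}$, and $\lambda\mu\notin\mathbb{R}$ since $\lambda\notin\mathbb{R}$ and $\mu\in\mathbb{R}^{*}$; thus $\lambda\mu\in\Lambda_{G}\backslash(F_{2}\cup F_{3}\cup\mathbb{R})$.

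For (ii), fix by (i) a ratio $\lambda\in\Lambda_{G}\backslash(F_{2}\cup F_{3}\cup\mathbb{R})$ and set $C_{\lambda}:=\{c:\ (c,\lambda)\in G\}$, a nonempty subset of $\Gamma_{G}$; pick $c_{0}\in C_{\lambda}$ and $h=(c_{0},\lambda)\in G$. Since $\lambda\notin F_{2}\cup F_{3}$ and $\lambda\notin\mathbb{R}$, every $(c,\lambda)\in G$ lies in $G\backslash\mathcal{SR}_{n}$ with $\lambda\in\Lambda_{G}\backslash\mathbb{R}$, so it remains only to find centers in $C_{\lambda}$ forming a basis of $E_{G}$. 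Put $V_{\lambda}:=\mathrm{vect}(C_{\lambda}-C_{\lambda})$. The two key computations are: for a non-translation $f=(a,\alpha)\in G$ one has $fhf^{-1}=(f(c_{0}),\lambda)\in G$ with $f(c_{0})-c_{0}=(1-\alpha)(a-c_{0})$, so $\alpha\neq 1$ forces $a-c_{0}\in V_{\lambda}$ for every $a\in\Gamma_{G}$; and for $T_{v}\in G_{1}$ one has $T_{v}h=(c_{0}+\frac{v}{1-\lambda},\lambda)\in G$, so $v\in V_{\lambda}$, i.e. $G_{1}(0)\subseteq V_{\lambda}$. These identities give $\Gamma_{G}\subseteq c_{0}+V_{\lambda}$ and $G_{1}(0)\subseteq V_{\lambda}$, and together with $C_{\lambda}\subseteq\Gamma_{G}$ they identify $E_{G}$ with $c_{0}+V_{\lambda}$. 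Since $E_{G}$ is assumed to be a vector space it contains $0$, which forces $c_{0}\in V_{\lambda}$ and hence $\mathrm{vect}(C_{\lambda})=V_{\lambda}=E_{G}$. I would then extract from $C_{\lambda}$ a basis $(a_{1},\dots,a_{p})$ of $E_{G}$; the maps $f_{k}=(a_{k},\lambda)\in G\backslash\mathcal{SR}_{n}$ are as required.

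For (iii), the mechanism is equivariance of the construction under conjugation by the translation $T_{-a}$. Writing $\phi(g)=T_{-a}\circ g\circ T_{a}$, a direct calculation gives $\phi((b,\beta))=(b-a,\beta)$ for $\beta\neq 1$ and $\phi(T_{v})=T_{v}$; hence $\phi$ is a group isomorphism $G\to G'$ that preserves ratios, giving $\Lambda_{G'}=\Lambda_{G}$, and it carries fixed-point sets to fixed-point sets, giving $\Gamma_{G'}=T_{-a}(\Gamma_{G})=\Gamma_{G}-a$ while $G_{1}'(0)=G_{1}(0)$. Because $a\in\Gamma_{G}$ we have $0\in\Gamma_{G'}$, and because conjugating any non-translation $(b,\beta)$ by $T_{v}$ with $T_{v}\in G_{1}$ yields $(b+v,\beta)\in G$, the set $\Gamma_{G}$ is invariant under translation by $G_{1}(0)$; consequently $G_{1}(0)$ lies in the direction space of $\mathrm{Aff}(\Gamma_{G})$ and is absorbed by it. Thus the affine subspace $E_{G}$ is determined up to translation by $\Gamma_{G}$, and applying $T_{-a}$ to the position data $\Gamma_{G}$ while leaving the translation directions fixed yields $E_{G'}=T_{-a}(E_{G})$.

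The main obstacle is part (ii): manufacturing, for the single non-real, non-root-of-unity ratio $\lambda$ supplied by (i), enough centers in $C_{\lambda}$ to span all of $E_{G}$. Everything reduces to the two identities displayed above, which transport both the ``position'' data $\Gamma_{G}$ and the ``translation'' data $G_{1}(0)$ into the one direction space $V_{\lambda}$; the sole purpose of the hypotheses $\Lambda_{G}\backslash\mathbb{R}\neq\emptyset$ and $G\backslash\mathcal{SR}_{n}\neq\emptyset$ is to guarantee, via (i), that such a $\lambda$ exists.
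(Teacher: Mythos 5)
Part (i) of your proposal is correct, and it is essentially the paper's own argument (multiply a non-real ratio by a real ratio outside $F_{2}\cup F_{3}$, using that $\Lambda_{G}$ is a group).

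The genuine gap is in part (ii), at the sentence where your two (correct) conjugation identities are said to ``identify $E_{G}$ with $c_{0}+V_{\lambda}$''. The inclusions $\Gamma_{G}\subseteq c_{0}+V_{\lambda}$ and $G_{1}(0)\subseteq V_{\lambda}$ do \emph{not} give $E_{G}\subseteq c_{0}+V_{\lambda}$: since $G_{1}$ is a subgroup, $0\in G_{1}(0)$, and $0$ lies in the affine subspace $c_{0}+V_{\lambda}$ only when $c_{0}\in V_{\lambda}$ --- which is exactly what you are trying to deduce, so the step is circular. What the two inclusions actually yield is only $E_{G}\subseteq \mathbb{C}c_{0}+V_{\lambda}$, and the chain ``$E_{G}$ a vector space $\Rightarrow c_{0}\in V_{\lambda}\Rightarrow V_{\lambda}=E_{G}$'' is false in general. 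Concretely, in $\mathcal{H}(2,\mathbb{C})$ let $\lambda=2i$ and $G=\langle h,\,T_{e_{2}}\rangle$ with $h=(e_{1},2i)$, where $(e_{1},e_{2})$ is the canonical basis of $\mathbb{C}^{2}$. Every element of $G$ has the form $T_{u}\circ h^{k}$ with $u\in\mathbb{Z}[2i,\tfrac{1}{2i}]e_{2}$, so the elements of ratio $2i$ are exactly the $T_{u}\circ h$, with centers $e_{1}+\tfrac{u}{1-2i}$. Hence $C_{2i}\subseteq e_{1}+\mathbb{C}e_{2}$, $V_{2i}=\mathbb{C}e_{2}$, and every possible choice of $c_{0}\in C_{2i}$ satisfies $c_{0}\notin V_{2i}$; yet $E_{G}\supseteq \mathrm{Aff}(\{e_{1},0,e_{2}\})=\mathbb{C}^{2}$ is a vector space. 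Fortunately your own identities still suffice after a small repair: $E_{G}\subseteq\mathbb{C}c_{0}+V_{\lambda}=\mathrm{vect}(C_{\lambda})$ (the equality holds because $c_{0}\in C_{\lambda}$ and $V_{\lambda}=\mathrm{vect}(C_{\lambda}-C_{\lambda})\subseteq\mathrm{vect}(C_{\lambda})$), while conversely $\mathrm{vect}(C_{\lambda})\subseteq E_{G}$ since $C_{\lambda}\subseteq\Gamma_{G}\subseteq E_{G}$ and $E_{G}$ is a vector space. Thus $\mathrm{vect}(C_{\lambda})=E_{G}$, and the basis should be extracted from the spanning set $C_{\lambda}$ itself, not from $V_{\lambda}$ (in the example, $e_{1}$ and $e_{1}+\tfrac{1}{1-2i}e_{2}$ work). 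With that correction your (ii) is valid and is arguably cleaner than the paper's proof, which reaches the same conclusion by a dimension-count contradiction with an explicit invertible matrix applied to the conjugates $f_{i}\circ f\circ f_{i}^{-1}$ and $T_{i}\circ f\circ T_{i}^{-1}$ --- the same conjugation mechanism as yours.

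Part (iii) contains the same oversight, and there it is fatal to the argument as written. The claim that $G_{1}(0)$ ``lies in the direction space of $\mathrm{Aff}(\Gamma_{G})$ and is absorbed by it'', so that ``$E_{G}$ is determined up to translation by $\Gamma_{G}$'', treats $G_{1}(0)$ as directions only; but in the definition $E_{G}=\mathrm{Aff}(\Gamma_{G}\cup G_{1}(0))$ the point $0\in G_{1}(0)$ is a generator of the hull and can enlarge it beyond $\mathrm{Aff}(\Gamma_{G})$. In the same example, taking $a=e_{1}\in\Gamma_{G}$ gives $E_{G'}=\mathbb{C}e_{2}$ while $T_{-e_{1}}(E_{G})=\mathbb{C}^{2}$, so your argument --- and in fact the asserted equality $E_{G'}=T_{-a}(E_{G})$ itself --- breaks down under the paper's literal definition of $E_{G}$; the identity is correct only if $E_{G}$ is read as $\mathrm{Aff}(\Gamma_{G})+\mathrm{vect}(G_{1}(0))$, in which case your absorption argument does go through. (This is partly a defect of the paper: its own proof of (iii) silently assumes $\dim E_{G'}=\dim E_{G}$, which fails in this example. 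Still, your write-up neither detects nor circumvents the problem, so (iii) cannot be certified as proved.) The ratio statement $\Lambda_{G'}=\Lambda_{G}$ in your (iii) is correct.
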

\medskip

\begin{proof}(i)
Let $f=(a,\lambda'), g=(b,\mu)\in G$ such that $\lambda'\in\Lambda_{G}\backslash\mathbb{R}$ and $\mu\notin\left(F_{2}\cup F_{3}\right)$.  Suppose that
 $\lambda'\in F_{i}$ for some $i\in\{2,3\}$ and $\mu\in \mathbb{R}$, so $|\lambda'|=1$ and $|\mu|\neq 1$ since $-1,1\in F_{2}\cup F_{3}$. Then
 $|\lambda'\mu|\neq 1$, it follows that $\lambda'\mu \notin\left(F_{2}\cup F_{3}\right)$ and  $\lambda'\mu \notin\mathbb{R}$ because $\mu\in \mathbb{R}$ and
 $\lambda'\notin \mathbb{R}$. Therefore
$f\circ g=(c, \lambda'\mu)$ for  $c=\frac{-\lambda'\mu b+\lambda'(b-a)+a}{1-\lambda'\mu}$, so  $\lambda=\lambda'\mu\in\Lambda_{G}\backslash \left(F_{2}\cup F_{3}\cup \mathbb{R}\right)$.
\
\\
\\
(ii) Suppose that $\mathrm{dim}(vect(\Gamma_{G}))=k<
\mathrm{dim}(E_{G})=p$. By Lemma~\ref{L:6} (iii), $k\geq 1$. By
(i), we let $a_{1},\dots,a_{k}\in \Gamma_{G}$ and
$b_{k+1},\dots,b_{p}\in G_{1}(0)$,  such that:\
\\
- $\mathcal{B}_{1}=(a_{1},\dots,a_{k},b_{k+1},\dots,b_{p})$  is a basis
of  $E_{G}$.\
\\
-  $(a_{1},\dots,a_{k})$ is a basis of $vect(\Gamma_{G})$.\
\\
- $f=(a_{1},\lambda)\in G\backslash \mathcal{SR}_{n}$ with
$\lambda\in \Lambda_{G}\backslash\mathbb{R}$.\ (i.e. $\lambda\in
\Lambda_{G}\backslash(F_{2}\cup F_{3}\cup\mathbb{R})$).
\\
-$f_{k}=(a_{i},\lambda_{i})\in G\backslash
\mathcal{T}_{n}$, $i=2,\dots, k$.\
\\
\\
Write $f'_{i}=f_{i}\circ f\circ f^{-1}_{i}$, for every  $2\leq i
\leq k$. We have $f'_{i}=(f_{i}(a_{1}), \lambda)\in G\backslash
\mathcal{T}_{n}$. See that
$f_{i}(a_{1})=\lambda_{i}a_{1}+(1-\lambda_{i})a_{i}\in\Gamma_{G}$,
$i=2,\dots,k$.
\\
\\
 For every  $k+1\leq i \leq p$  there exists
$T_{i}\in G_{1}$  such that  $T_{i}(0)=b_{i}$. Write
$f_{i}=T_{i}\circ f\circ T^{-1}_{i}$, for every  $k+1\leq i \leq
p$. We have $f_{i}=(T_{i}(a_{1}), \lambda)\in G\backslash
\mathcal{SR}_{n}$.  So $T_{i}(a_{1})\in \Gamma_{G}$ and
$T_{i}(a_{1})=a_{1}+b_{i}$, \ $k+1\leq i \leq p$.
\
\\
 \\ Let's show that
$\mathcal{B}_{2}=(f_{1}(a_{1}),\dots,f_{k}(a_{1}),T_{k+1}(a_{1}),\dots,T_{p}(a_{1}))$
 is a basis of $E_{G}$: \\
 Let $$M=\left[\begin{array}{cc}
            B & A \\
            0 & I_{p-k}
          \end{array}\right]\in M_{n}(\mathbb{C}), \ \ \ \mathrm{with} \ \ \ A=\left[\begin{array}{ccc}
                  1 & \dots  & 1 \\
                  0 & \dots  & 0 \\
                   \vdots & \ddots  & \vdots  \\
                  0 & \dots   & 0
                \end{array}
\right]\in M_{k,p-k}(\mathbb{C}), $$
$$B=\left[\begin{array}{ccccc}
                  1& \lambda_{2} & \lambda_{3}& \dots  & \lambda_{k} \\
                  0 & 1-\lambda_{2}& 0 & \dots & 0 \\
                   \vdots & \ddots & \ddots &\ddots & \vdots  \\
                    \vdots & \ & \ddots &\ddots & 0  \\
                   0 & \dots & \dots   & 0& 1-\lambda_{k}
                \end{array}
\right]\in M_{k}(\mathbb{C}).$$
\\
 and  $I_{p-k}$ is the identity matrix of $M_{p-k}(\mathbb{C})$. As
 $f_{i}\in G\backslash \mathcal{T}_{n}$, $\lambda_{i}\neq 1$, for every $i=2,\dots,k$, so $M$ is invertible and $M(\mathcal{B}_{1})=\mathcal{B}_{2}$. So
$\mathcal{B}_{2}$ is a basis of $E_{G}$ contained in
$\Gamma_{G}$, a contradiction. We conclude that $k=p$.
\
\\
\\
(iii) Suppose that  $E_{G}$  is an affine subspace of
 $\mathbb{R}^{n}$  with dimension $p$. Let $a\in \Gamma_{G}$ and
 $G'=T_{-a}\circ G \circ T_{a}$.  Set $f=(a,\lambda)\in G\backslash \mathcal{T}_{n}$, then
 $T_{-a}\circ f\circ T_{a}=\lambda id_{\mathbb{C}^{n}}\in G'\backslash\mathcal{T}_{n}$, so
 $0\in\Gamma_{G'}\subset E_{G'}$, hence $E_{G'}$ is a vector space. By $(ii)$ there exists a basis
$(a'_{1},\dots,a'_{p})$  of  $E_{G'}$  contained in
$\Gamma_{G'}$.  Since  $\Gamma_{G'}=T_{-a}(\Gamma_{G})$, we let
 $a_{k}=T_{a}(a'_{k})$,  $1\leq k \leq p$, then $a_{1},\dots,a_{p}\in\Gamma_{G}$. We have  $\Gamma_{G'}=T_{-a}(\Gamma_{G})\subset T_{-a}(E_{G})$  and  $T_{-a}(E_{G})$  is a
  vector subspace of  $\mathbb{R}^{n}$  with dimension $p$, containing  $a'_{1},\dots,a'_{p}$. Therefore  $E_{G'}=T_{-a}(E_{G})$.
    \\
  On the other hand,  for every $g=(b,\mu)\in G\backslash \mathcal{T}_{n}$, $T_{-a}\circ g \circ T_{a}=(b-a,\mu)$, so $\Lambda_{G'}=\Lambda_{G}$.\
\end{proof}
\medskip

\begin{lem}\label{L:020} Let $G$ be a non abelian subgroup of $\mathcal{H}(n,\mathbb{C})$.
Then:
\begin{itemize}
  \item [(i)] If $E_{G}$ is a vector
subspace of $\mathbb{C}^{n}$, then $G(0)\subset E_{G}$.
  \item [(ii)] $\Gamma_{G}$ and $E_{G}$ are  $G$-invariant.
\end{itemize}
\end{lem}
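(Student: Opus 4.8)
My plan is to prove (i) directly, splitting $G$ into its translation and non-translation parts, and then to deduce the invariance statements of (ii), using (i) for the $E_{G}$-part. For (i), I would assume $E_{G}$ is a vector subspace and take $f\in G$. If $f\in G_{1}$, then $f(0)\in G_{1}(0)\subset E_{G}$ is immediate from $E_{G}=Aff(\Gamma_{G}\cup G_{1}(0))$. If instead $f=(a,\lambda)\in G\backslash\mathcal{T}_{n}$, its center $a=\mathrm{Fix}(f)$ lies in $\Gamma_{G}\subset E_{G}$, and the direct computation $f(0)=\lambda(0-a)+a=(1-\lambda)a$ exhibits $f(0)$ as a scalar multiple of $a$, hence an element of the vector space $E_{G}$. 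In either case $f(0)\in E_{G}$, which gives $G(0)\subset E_{G}$.

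For the invariance of $\Gamma_{G}$ in (ii), I would argue by conjugation. Given $x\in\Gamma_{G}$, write $x=\mathrm{Fix}(f)$ with $f=(x,\alpha)\in G\backslash\mathcal{T}_{n}$, and let $g\in G$ be arbitrary. I would form $h=g\circ f\circ g^{-1}\in G$ and check that conjugation by the affine map $g$ leaves the linear part $\alpha\neq 1$ of $f$ unchanged, so that $h$ has ratio $\alpha$ and is therefore not a translation, i.e. $h\in G\backslash\mathcal{T}_{n}$. The computation $h(g(x))=g(f(x))=g(x)$ then identifies $g(x)$ as the unique fixed point of $h$, so $g(x)\in\mathrm{Fix}(h)\subset\Gamma_{G}$. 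This proves $g(\Gamma_{G})\subset\Gamma_{G}$.

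For the invariance of $E_{G}$, I would first record the enabling observation that $id\in G_{1}$ forces $0=id(0)\in G_{1}(0)\subset E_{G}$; thus $E_{G}$ is an affine subspace through the origin, hence a vector subspace, so (i) applies and yields $G(0)\subset E_{G}$. Fixing $g\in G$, I would then use that $g$ is affine together with $E_{G}=Aff(\Gamma_{G}\cup G_{1}(0))$: since every point of $E_{G}$ is an affine combination of finitely many generators and $g$ sends such a combination to the corresponding combination of the images, it suffices to map the generators into $E_{G}$. The previous paragraph gives $g(\Gamma_{G})\subset\Gamma_{G}\subset E_{G}$, and for $b=T(0)\in G_{1}(0)$ with $T\in G_{1}$ I would write $g(b)=(g\circ T)(0)\in G(0)\subset E_{G}$. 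Hence $g(E_{G})\subset E_{G}$.

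I expect the main obstacle to be the invariance of $\Gamma_{G}$: the two things to verify with care are that affine conjugation preserves the ratio (so that $h=g\circ f\circ g^{-1}$ remains a genuine homothety possessing a fixed point rather than degenerating into a translation) and the clean identification of that fixed point as $g(x)$. Everything else is formal, the crucial remark being that $E_{G}$ always contains the origin; this is exactly what lets me invoke (i) inside the proof of (ii) and reduces the otherwise awkward treatment of $g(G_{1}(0))$ to the single inclusion $g(b)=(g\circ T)(0)\in G(0)$.
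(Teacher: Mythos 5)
Your proof of (i) and of the $G$-invariance of $\Gamma_{G}$ is essentially the paper's own argument: the computation $f(0)=(1-\lambda)a$ with $a\in\Gamma_{G}\subset E_{G}$ for (i), and conjugation $h=g\circ f\circ g^{-1}$, which preserves the ratio $\alpha\neq 1$ and has $g(x)$ as its unique fixed point, for $\Gamma_{G}$. These parts are correct; your $\Gamma_{G}$ argument is in fact cleaner than the paper's, which gratuitously (and without justification) asserts that the representative $f$ can be chosen outside $\mathcal{SR}_{n}$, whereas you only use $\alpha\neq1$.

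The $E_{G}$ half of (ii) contains a genuine gap. Everything you do there rests on the claim that $id_{\mathbb{C}^{n}}\in G_{1}$ forces $0\in G_{1}(0)\subset E_{G}$, so that $E_{G}$ is automatically a vector subspace and (i) applies unconditionally. That claim does follow from the definitions as literally printed, but it cannot be the intended reading, because it is incompatible with the rest of the paper. Take $G$ generated by $f=(a,\lambda)$ and $g=(b,\mu)$ with $a\neq b$, $\lambda=\mu=2i$ say, and with the line $\Delta=\mathbb{C}(b-a)+a$ not passing through the origin. By Lemma~\ref{L:1111}, $E_{G}=Aff(\{a,b\})=\Delta$, which does not contain $0$; consistently, Lemma~\ref{L:222} makes $\Delta$ $G$-invariant, so $\overline{G(a)}\subset\Delta$, and Proposition~\ref{p:1} gives $\overline{G(a)}=E_{G}$, again forcing $E_{G}=\Delta\not\ni0$. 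Under your reading, $E_{G}$ would contain $0$, $a$ and $b$, hence be at least two-dimensional, contradicting these statements (and rendering the hypothesis of (i), as well as every reduction in the paper of the form ``assume $E_{G}$ is a vector space, otherwise conjugate by $T_{-a}$'', vacuous). In the intended definition the elements of $G_{1}(0)$ contribute \emph{directions} to $E_{G}$, not points; the printed definition is defective, but your proof inherits the defect instead of proving the lemma the paper needs.

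Once ``$0\in E_{G}$ always'' is withdrawn, your argument breaks precisely where you handle $g(G_{1}(0))$: the step $g(b)=(g\circ T)(0)\in G(0)\subset E_{G}$ invokes (i), and (i) is available only when $E_{G}$ is a vector space. What is missing is the reduction the paper performs: choose $a\in\Gamma_{G}$, pass to $G'=T_{-a}\circ G\circ T_{a}$, so that $0\in\Gamma_{G'}$ and $E_{G'}=T_{-a}(E_{G})$ is a genuine vector subspace (Lemma~\ref{L:1}.(iii)); apply (i) to $G'$ to see that every $f\in G'$, written $f(z)=\lambda z+b$, has $b\in E_{G'}$, whence $E_{G'}$ is $G'$-invariant; then translate back. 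Your affine-hull mechanism (affine maps preserve affine combinations, so it suffices to control the generators) is sound and could be retained, but it must be run after such a conjugation, not in place of it.
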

\medskip

\begin{proof}(i) By construction, $G_{1}(0)\subset E_{G}$. Let $f\in G\backslash G_{1}$, then  $f=(a,\lambda)$, for
some $\lambda\in\Lambda_{G}$  and \ $a\in\Gamma_{G}\subset E_{G}$. Therefore \
$f(z)=\lambda(z-a)+a$, $z\in \mathbb{C}^{n}$  and
$f(0)=(1-\lambda)a$, so $f(0)\in E_{G}$ since $E_{G}$ is a vector space.
\
\\
\\
(ii) $\Gamma_{G}$ is $G$-invariant: Let  $a\in \Gamma_{G}$ and \
$g\in G$  then there exists $\lambda\in\mathbb{C}\backslash
(F_{2}\cup F_{3})$ such that $f=(a,\lambda)\in G\backslash
\mathcal{SR}_{n}$. We let \ $h=g\circ f\circ
g^{-1}=(g(a),\lambda)\in G\backslash \mathcal{SR}_{n}$, so
$g(a)\in \Gamma_{G}$ \ and hence $\Gamma_{G}$  is $G$-invariant.
\
\\
 $E_{G}$  is $G$-invariant: Let $a\in \Gamma_{G}$
and $G'=T_{-a}\circ G \circ T_{a}$. We have  $G'$ is a non abelian
subgroup of  $\mathcal{H}(n, \mathbb{C})$  and
$E_{G'}=T_{-a}(E_{G})$  is a vector subspace of $\mathbb{C}^{n}$.
Let  $f\in G'$ having the form $f(z)=\lambda z+b$, $z\in
\mathbb{C}^{n}$.  By (i), $b=f(0)\in\Gamma_{G'} \subset E_{G'}$.
So for every  $z\in E_{G'}$, $f(z)\in E_{G'}$, hence $E_{G'}$ is
$G'$-invariant. By Lemma ~\ref{L:1}.(iii) one has
$E_{G}=T_{-a}(E_{G'})$ is $G$-invariant.
\end{proof}
\bigskip

\begin{lem} \label{L:1200} Let  $G$\ be a non abelian subgroup of
$\mathcal{H}(n,\mathbb{C})$. Suppose that $E_{G}$ is a vector
space and there exist
$f_{1}=(a_{1},\lambda),\dots,f_{p}=(a_{p},\lambda)\in
G\backslash\mathcal{SR}_{n}$  with
$\lambda\in\Lambda_{G}\backslash\{0,1\}$ and
$\mathcal{B}_{1}=(a_{1},\dots,a_{p})$   is a bases of $E_{G}$.
Then there exists \ $f=(a,\lambda)\in G\backslash
\mathcal{SR}_{n}$  such that
$\mathcal{B}_{2}=(a_{1}-a,\dots,a_{p}-a)$  is also a basis of
$E_{G}$.
\end{lem}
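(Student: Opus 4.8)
The plan is to convert the statement into one scalar (non-)vanishing condition and then to manufacture the required centre by conjugation. First I would fix the given basis $\mathcal{B}_{1}=(a_{1},\dots,a_{p})$ of $E_{G}$ and write a candidate centre in these coordinates as $a=\sum_{k=1}^{p}\alpha_{k}a_{k}$, with $\alpha=(\alpha_{1},\dots,\alpha_{p})^{T}$. Relative to $\mathcal{B}_{1}$ the family $\mathcal{B}_{2}=(a_{1}-a,\dots,a_{p}-a)$ then has coordinate matrix $I_{p}-\alpha\mathbf{1}^{T}$ (its $i$-th column being $e_{i}-\alpha$), whose determinant is $1-\mathbf{1}^{T}\alpha=1-\sum_{k=1}^{p}\alpha_{k}$ by the matrix determinant lemma. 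Thus $\mathcal{B}_{2}$ is a basis of $E_{G}$ if and only if $\sum_{k=1}^{p}\alpha_{k}\neq 1$, which by Lemma~\ref{L:4} says precisely that $a\notin\mathcal{A}:=Aff(a_{1},\dots,a_{p})$, the affine hyperplane of $E_{G}$ spanned by the $a_{k}$. So the whole lemma reduces to producing an $f=(a,\lambda)\in G\backslash\mathcal{SR}_{n}$ whose centre $a$ lies off $\mathcal{A}$.

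Next I would generate admissible centres of ratio $\lambda$ by conjugation. Writing $C_{\lambda}:=\{c:(c,\lambda)\in G\}$, note that for every $g\in G$ one has $g\,f_{1}\,g^{-1}=(g(a_{1}),\lambda)$, and since $f_{1}\notin\mathcal{SR}_{n}$ forces $\lambda\notin F_{2}\cup F_{3}$, this conjugate again lies in $G\backslash\mathcal{SR}_{n}$; hence $g(a_{1})\in C_{\lambda}$ for all $g\in G$. It therefore suffices to find a single $g\in G$ with $g(a_{1})\notin\mathcal{A}$. Let $s$ be the linear coordinate-sum functional on $E_{G}$ (so $\mathcal{A}=\{s=1\}$ and its direction subspace is $W=\ker s$). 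If $g=T_{v}$ is a translation then $g(a_{1})=a_{1}+v$ has coordinate sum $1+s(v)$, which is $\neq 1$ exactly when $v\notin W$; if $g=(b,\mu)\in G\backslash\mathcal{T}_{n}$ then $g(a_{1})=\mu a_{1}+(1-\mu)b$ has coordinate sum $\mu+(1-\mu)s(b)$, which (as $\mu\neq 1$) is $\neq 1$ exactly when $s(b)\neq 1$, i.e. $b\notin\mathcal{A}$. So a conjugate escapes $\mathcal{A}$ as soon as $G$ contains a translation by some $v\notin W$, or a homothety whose centre lies off $\mathcal{A}$.

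To exhibit such a $g$ I would exploit that $E_{G}$ is a vector space: then $0\in E_{G}$, and $0$ has coordinate sum $0\neq 1$, so the origin already lies off $\mathcal{A}$. In the normalized situation in which this hypothesis is used (after the conjugation of Lemma~\ref{L:1}(iii) placing a fixed point at $0$) there is a homothety $(0,\mu)\in G\backslash\mathcal{T}_{n}$, $\mu\neq 1$, with $0\in\Gamma_{G}$; taking $g=(0,\mu)$ gives $a:=g(a_{1})=\mu a_{1}$ of coordinate sum $\mu\neq 1$, whence $a\notin\mathcal{A}$ and $f=g\,f_{1}\,g^{-1}=(a,\lambda)$ is the desired map. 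The main obstacle is exactly this last point: one must guarantee that $G$ does not preserve the affine hyperplane $\mathcal{A}$, equivalently that $\Gamma_{G}\not\subset\mathcal{A}$ (equivalently $G_{1}(0)\not\subset W$), since if every admissible centre sat in $\mathcal{A}$ and every translation vector in $W$ then no choice of $g$ would succeed. I would therefore isolate the claim $\Gamma_{G}\not\subset\mathcal{A}$ as the crux, deriving it from the conjugation identity $g(a_{1})=\mu a_{1}+(1-\mu)b\in C_{\lambda}$ together with the presence of a fixed point (the origin) off $\mathcal{A}$ furnished by the vector-space structure of $E_{G}$, and then conclude as above.
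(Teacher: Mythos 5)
Your route is genuinely different from the paper's, and your opening reduction is the right way to see the problem; in fact it exposes that the paper's own proof is erroneous. The paper does not look for a centre off the hyperplane at all: it takes the concrete point $a=f_{p-1}(a_{p})=\lambda a_{p}+(1-\lambda)a_{p-1}$ and claims, via the matrix $P$ with $\det(P)=2\lambda(1-\lambda)$, that $\mathcal{B}_{2}$ is a basis. But the coefficients of this $a$ in $\mathcal{B}_{1}$ sum to $\lambda+(1-\lambda)=1$, so by your criterion (Lemma~\ref{L:4}) $a\in\mathcal{A}:=Aff(a_{1},\dots,a_{p})$ and $\mathcal{B}_{2}$ cannot be a basis; explicitly $a_{p-1}-a=-\lambda(a_{p}-a_{p-1})$ and $a_{p}-a=(1-\lambda)(a_{p}-a_{p-1})$ are collinear. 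The paper's matrix contains a sign error: since $a_{p-1}-a=\lambda a_{p-1}-\lambda a_{p}$, the entry in position $(p,p-1)$ must be $-\lambda$, not $\lambda$, and with the correct sign $\det(P)=0$. Moreover no repair within the paper's scheme is possible: the subgroup $\langle f_{1},\dots,f_{p}\rangle$ preserves $\mathcal{A}$ (each $f_{i}$ is a homothety centred at a point of $\mathcal{A}$), so every centre manufactured from the $f_{i}$ alone stays in $\mathcal{A}$; one must, as you do, bring in other elements of $G$.

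However, your own closing step has a gap: you assert that the vector-space hypothesis on $E_{G}$ ``furnishes a fixed point (the origin)'', i.e. $0\in\Gamma_{G}$. It does not. $E_{G}$ is an affine hull, and $0$ can lie in it without being fixed by any element of $G$; indeed, with the paper's literal definition the hypothesis is vacuous (since $id\in G_{1}$ one always has $0\in G_{1}(0)\subset E_{G}$), and then the lemma is outright false: take $G=\langle (e_{1},\lambda),(e_{2},\lambda)\rangle$ in $\mathbb{C}^{2}$ with $\lambda\notin\mathbb{R}$, $|\lambda|\neq 1$. Every element of $G$ preserves the line $\{z_{1}+z_{2}=1\}$, so writing any $g\in G\backslash\mathcal{T}_{2}$ as $z\mapsto\mu z+c$ one gets $\mu+s(c)=1$ and its centre $c/(1-\mu)$ has coordinate sum $1$; hence $\Gamma_{G}\subset\mathcal{A}$ and no admissible $f$ exists, even though $E_{G}\supset Aff(\{0,e_{1},e_{2}\})=\mathbb{C}^{2}$, so all stated hypotheses hold. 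The lemma is true, and your argument closes, only under the reading that matches its actual use in Proposition~\ref{p:1} (and which is consistent with Lemma~\ref{L:1111}): $G$ has been conjugated so that $0\in\Gamma_{G}$, equivalently $E_{G}=Aff(\Gamma_{G})$ is a vector space. Given that, either your direct computation with $g=(0,\mu)$ finishes, or, assuming only $0\in Aff(\Gamma_{G})$, write $0=\sum_{j}t_{j}c_{j}$ with $c_{j}\in\Gamma_{G}$ and $\sum_{j}t_{j}=1$; since $s(0)=0\neq 1$ some $c_{j}$ has $s(c_{j})\neq 1$, and conjugating $f_{1}$ by the corresponding $g=(c_{j},\mu)\in G$ gives, by your own formula $s(g(a_{1}))=\mu+(1-\mu)s(c_{j})\neq 1$, the desired $f=(g(a_{1}),\lambda)\in G\backslash\mathcal{SR}_{n}$. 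With that one repair your proof is complete, and unlike the paper's it is correct.
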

\medskip

\begin{proof} Let
$a=f_{p-1}(a_{p})$, since  $f_{p-1}=(a_{p-1},\lambda)\in G\backslash \mathcal{SR}_{n}$ then $a=\lambda a_{p}+(1-\lambda)a_{p-1}.$
 By Lemma ~\ref{L:020}.(ii), $\Gamma_{G}$ is $G$-invariant, so $a\in \Gamma_{G}$.
Let
$$P=\left[\begin{array}{cccccc}
            1 & 0 & \dots & \dots & 0 & 0 \\
            0 & \ddots & \ddots & \ddots & \vdots & \vdots \\
            \vdots & \ddots & \ddots & \ddots & \vdots & \vdots \\
            0 & \dots & 0& 1& 0 & 0 \\
            \lambda-1 & \dots & \dots & \lambda-1 & \lambda & \lambda-1\\
            -\lambda & \dots & \dots & -\lambda &\lambda & 1-\lambda
          \end{array}
\right].$$
Since $\lambda\notin\{0,1\}$, then
$\mathrm{det}(P)=2\lambda(1-\lambda)\neq 0$ and $P$  is invertible. We have  $P(\mathcal{B}_{1})=\mathcal{B}_{2}$
so  $\mathcal{B}_{2}$  is a basis of  $\mathbb{R}^{n}$.
\end{proof}
\bigskip

By the same proofs of Lemmas 2.8 and  2.1, in ~\cite{AYN}, we can show the following Lemma:

\begin{lem}\label{L:1111} Let $G$ be the subgroup of $\mathcal{H}(n, \mathbb{C})$ generated by $f_{1}=(a_{1},\lambda_{1}),\dots,
 f_{p}=(a_{p},\lambda_{p})\in \mathcal{H}(n, \mathbb{C})\backslash \mathcal{SR}_{n}$. Then $E_{G}=\mathcal{A}ff(\{a_{1},\dots, a_{p}\})$.
\end{lem}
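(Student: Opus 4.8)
The plan is to establish the two inclusions $\mathcal{A}ff(\{a_1,\dots,a_p\})\subseteq E_G$ and $E_G\subseteq\mathcal{A}ff(\{a_1,\dots,a_p\})$ separately. The first is immediate: since $\mathcal{T}_n\subset\mathcal{SR}_n$, each generator $f_i=(a_i,\lambda_i)\in\mathcal{H}(n,\mathbb{C})\backslash\mathcal{SR}_n$ lies in $G\backslash\mathcal{T}_n$, so $\mathrm{Fix}(f_i)=a_i$ and hence $a_1,\dots,a_p\in\Gamma_G\subseteq E_G$. As $E_G$ is an affine subspace, it contains the affine hull $\mathcal{A}ff(\{a_1,\dots,a_p\})$.

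For the reverse inclusion put $H=\mathcal{A}ff(\{a_1,\dots,a_p\})$ and let $V=vect(\{a_i-a_1:\ 2\leq i\leq p\})$ be its direction space. First I would show that $H$ is $G$-invariant. It is enough to check this on the generators and their inverses, and it is clear for $f_i$ and $f_i^{-1}$: these fix $a_i\in H$ and their linear parts are scalars, hence preserve $V$, so for $z\in H$ one has $f_i(z)=a_i+\lambda_i(z-a_i)\in a_i+V=H$. Invariance of $H$ under all of $G$ follows since $G$ is generated by the $f_i$. Granting this, every $g\in G\backslash\mathcal{T}_n$ maps $H$ to itself and has ratio $\mu\neq1$; choosing $z_0\in H$, the identity $g(z_0)-z_0=(\mu-1)(z_0-\mathrm{Fix}(g))$ shows $z_0-\mathrm{Fix}(g)\in V$, whence $\mathrm{Fix}(g)\in H$. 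Therefore $\Gamma_G\subseteq H$.

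It remains to control the translation part, i.e.\ to prove $G_1(0)\subseteq H$; I expect this to be the main obstacle. Invariance of $H$ gives easily that the vector $b$ of any translation $T\in G_1$ lies in $V$: applying $T$ to a point of $H$ and subtracting shows $b=T(0)\in V$. The difficulty is that one needs the stronger conclusion $T(0)\in H=a_1+V$, and $V$ and $H$ coincide only when $a_1\in V$, that is, when $0\in H$ and $E_G$ is a vector space. The natural remedy is the normalization of Lemma~\ref{L:1}.(iii): conjugating $G$ by $T_{-a_1}$ moves a center to the origin and makes the direction space and $H$ agree, after which the constant term of every word in $f_1,\dots,f_p$ --- obtained by iterating the rule $(g_1\circ g_2)(z)=\mu_1\mu_2 z+\mu_1c_2+c_1$ on the generators $f_i(z)=\lambda_i z+(1-\lambda_i)a_i$ --- is visibly a combination of the centers lying in $V$. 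The genuinely delicate point, and the heart of the argument borrowed from Lemmas~2.1 and~2.8 of \cite{AYN}, is to carry this bookkeeping over all words (not merely the generators and their commutators) and to transfer it back through the conjugation so that $G_1(0)$ is pinned to $H$ rather than only to its direction space.
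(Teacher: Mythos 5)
Your opening moves are correct: $\mathcal{A}ff(\{a_1,\dots,a_p\})\subseteq E_G$, the $G$-invariance of $H=\mathcal{A}ff(\{a_1,\dots,a_p\})$, the inclusion $\Gamma_G\subseteq H$, and the fact that every translation vector of $G_1$ lies in the direction space $V$ are all sound. But the step you flagged as the main obstacle, $G_{1}(0)\subseteq H$, is not a matter of delicate bookkeeping: it is \emph{false} whenever $0\notin H$, so no argument can close this gap. Indeed $T_{0}=id_{\mathbb{C}^{n}}\in G\cap\mathcal{T}_{n}$, so $0\in G_{1}(0)$ always; and, nontrivially, for $p\geq 2$ with $a_{1}\neq a_{2}$ the commutator $f_{1}\circ f_{2}\circ f_{1}^{-1}\circ f_{2}^{-1}$ is the translation by $(1-\lambda_{1})(1-\lambda_{2})(a_{1}-a_{2})$, a nonzero vector of $V$, while $V\cap H=\emptyset$ when $0\notin H$ (if $x\in V\cap H$ then $H=x+V=V\ni 0$). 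Concretely, for $n=2$, $f_{1}=((1,0),2i)$, $f_{2}=((0,1),2i)$, the vector $(1-2i)^{2}\left((1,0)-(0,1)\right)$ lies in $G_{1}(0)$ but not in $H$. In fact your two correct inclusions determine $E_{G}$ exactly: since $0,a_{1},\dots,a_{p}\in\Gamma_{G}\cup G_{1}(0)$ and $\Gamma_{G}\cup G_{1}(0)\subseteq H\cup V$, the paper's definition $E_{G}=Aff(\Gamma_{G}\cup G_{1}(0))$ forces $E_{G}=vect(\{a_{1},\dots,a_{p}\})$, which coincides with $\mathcal{A}ff(\{a_{1},\dots,a_{p}\})$ only when the latter contains the origin.

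Your proposed remedy cannot work even in principle: conjugation by a translation fixes every translation, $T_{-a}\circ T_{b}\circ T_{a}=T_{b}$, so the normalization $G'=T_{-a_{1}}\circ G\circ T_{a_{1}}$ leaves $G_{1}(0)$ literally unchanged, i.e. $G'_{1}(0)=G_{1}(0)$; transferring back through the conjugation therefore returns only $G_{1}(0)\subseteq V$, which you already had (for the same reason, Lemma~\ref{L:1}.(iii), on which your remedy relies, is itself incompatible with the stated definition of $E_{G}$). The defect lies in the statement rather than in your argument: the paper offers no proof of Lemma~\ref{L:1111} at all, only an appeal to ``the same proofs'' of Lemmas 2.1 and 2.8 of \cite{AYN}, and its conclusion contradicts the paper's own definition of $E_{G}$. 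If one reads $E_{G}$ as $Aff(\Gamma_{G})$, or restates the conclusion as $E_{G}=vect(\{a_{1},\dots,a_{p}\})$, then your argument through $\Gamma_{G}\subseteq H$ is already a complete proof, and that corrected reading is the one the later applications (Lemma~\ref{L:12}, Proposition~\ref{p:1}) actually need.
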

\bigskip

\begin{lem}\label{L:222} Let $G$ be a subgroup of $\mathcal{H}(n, \mathbb{C})$ generated by $f=(a, \lambda)$ and $g=(b,\mu)$.
Then $\Delta=\mathbb{C}(b-a)+a$ is $G$-invariant and $G_{/\Delta}$ is a subgroup of $\mathcal{H}(1, \mathbb{C})$.
\end{lem}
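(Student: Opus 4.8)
The plan is to introduce an explicit affine coordinate on the line $\Delta$ and to transport the action of $G$ to $\mathbb{C}$ through it. Assuming $a\neq b$ (the case $a=b$ makes $\Delta=\{a\}$ a single point, on which every restriction is trivial), I would consider the affine bijection $\phi\colon\mathbb{C}\to\Delta$ given by $\phi(t)=a+t(b-a)$. Then $\Delta=\phi(\mathbb{C})$, $\phi(0)=a$, $\phi(1)=b$, and $\phi$ identifies $\Delta$ with a copy of $\mathbb{C}^{1}$. I would prove both assertions at once by showing that the generators $f$ and $g$ restrict to $\Delta$ and that, read in the coordinate $t$, these restrictions are affine homotheties of $\mathbb{C}$.

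First I would carry out the two defining computations. Since $f=(a,\lambda)$ fixes $a=\phi(0)$, one gets $f(\phi(t))=\lambda\bigl(\phi(t)-a\bigr)+a=a+\lambda t(b-a)=\phi(\lambda t)$, so $f(\Delta)\subset\Delta$ and $\phi^{-1}\circ f|_{\Delta}\circ\phi$ is the map $t\mapsto \lambda t$, i.e.\ the homothety $(0,\lambda)\in\mathcal{H}(1,\mathbb{C})$. Likewise, using $b=\phi(1)$, the map $g=(b,\mu)$ gives $g(\phi(t))=\mu\bigl(\phi(t)-b\bigr)+b=b+\mu(t-1)(b-a)=\phi\bigl(\mu t+1-\mu\bigr)$, so $g(\Delta)\subset\Delta$ and $\phi^{-1}\circ g|_{\Delta}\circ\phi$ is $t\mapsto \mu t+(1-\mu)$, i.e.\ the homothety $(1,\mu)\in\mathcal{H}(1,\mathbb{C})$.

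From here I would conclude as follows. Each affine homothety of $\mathbb{C}^{n}$ is a bijection, and since $f$ and $g$ map the line $\Delta$ into itself they restrict to bijections of $\Delta$; hence every word in $f,g$ and their inverses preserves $\Delta$, which gives that $\Delta$ is $G$-invariant and that the restriction map $h\mapsto h|_{\Delta}$ is defined on all of $G$. Conjugating by $\phi$ then yields a map $\Phi\colon G_{/\Delta}\to\mathcal{H}(1,\mathbb{C})$, $\Phi(h|_{\Delta})=\phi^{-1}\circ h|_{\Delta}\circ\phi$, which is a group homomorphism because conjugation by a fixed affine isomorphism preserves composition and sends affine homotheties to affine homotheties. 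Its image is the subgroup of $\mathcal{H}(1,\mathbb{C})$ generated by $(0,\lambda)$ and $(1,\mu)$; identifying $\Delta$ with $\mathbb{C}$ via $\phi$, this exhibits $G_{/\Delta}$ as a subgroup of $\mathcal{H}(1,\mathbb{C})$.

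The computations are routine, so the only points requiring care are the bookkeeping ones: checking that the restriction-to-$\Delta$ map is well-defined on the whole group (not merely on the generators) and that conjugation by $\phi$ is a genuine group homomorphism landing in $\mathcal{H}(1,\mathbb{C})$, together with the observation that $\mathcal{H}(1,\mathbb{C})$ already contains the ratio-$1$ (translation) maps, so no restriction falls outside it. I expect the main obstacle, such as it is, to be stating this identification cleanly rather than any genuine difficulty; the degenerate case $a=b$ should be dismissed separately at the outset.
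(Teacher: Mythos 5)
Your proof is correct and takes essentially the same approach as the paper: the paper's proof likewise checks that the generators send a general point $z=\alpha(b-a)+a$ of $\Delta$ to $\lambda\alpha(b-a)+a$ and $(1+\mu(\alpha-1))(b-a)+a$ respectively, which are exactly your coordinate formulas $t\mapsto\lambda t$ and $t\mapsto\mu t+(1-\mu)$ read through the parametrization $\phi$. Your write-up merely makes explicit the bookkeeping (invertibility of the restrictions, the conjugation homomorphism into $\mathcal{H}(1,\mathbb{C})$, and the degenerate case $a=b$) that the paper leaves implicit.
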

\medskip

\begin{proof} Let $\alpha\in \mathbb{C}$, and  $z=\alpha(b-a)+a$  we have

$$\begin{array}{ccc}
  f(z) =\lambda(\alpha(b-a)+a-a)+a & \ \ \ \ \ \mathrm{and}\     & g(z) =\mu(\alpha(b-a)+a-b)+b\\
   =\lambda\alpha(b-a)+a  \ \ \ \ \ \ \ & \ \ \ \ \   &\ \ \ \ \ \ \ \ \ \ \ \ \ =\mu(\alpha-1)(b-a)+b-a+a.\\
  \ \ \ \ \ \ \ \ \ \ \ & \ \ \ \   & \ \ \ \ \ \ \ \ \ \  =(1+\mu(\alpha-1))(b-a)+a
\end{array}$$

\medskip

  So  $f(z),\ g(z)\in \mathbb{C}(b-a)+a$.
\end{proof}
\medskip

\begin{lem}\label{L:bb10} Let $G$ be the group generated by $h=\lambda Id_{\mathbb{C}}$  and  $f=(a,\lambda)$
 with  $a\in \mathbb{C}^{*}$, $\lambda\notin \mathbb{C}\backslash\{0,1\}$.
 Then for every $k\in\mathbb{Z}^{*}$, one has:\
 \\
 $(i)$ $\Lambda_{G}=\{\lambda^{j},\ \
 j\in\mathbb{Z}\}$ and for every $b\in\Gamma_{G}$, $g=(b,\lambda)\in G$.\
 \\
 $(ii)$   $(\lambda^{k}-1)^{2} G_{1}(0)\subset
 G_{1}(0)$ and $ (\lambda^{k}-1)^{2} \Gamma_{G}\subset
G_{1}(0)$.\
 \\
 $(iii)$ $\lambda^{k}G_{1}(0)\subset
 G_{1}(0)$.\
 \\
 $(iv)$ if  $\lambda^{k}\neq1$,  $\left(\frac{1}{1-\lambda^{k}}\right)G_{1}(0)\subset
 \Gamma_{G}$ and $(1-\lambda^{k})\Gamma_{G}\subset\Gamma_{G}$.\
 \end{lem}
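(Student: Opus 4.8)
The plan is to work inside the group $G$ generated by $h=\lambda\,\mathrm{Id}_{\mathbb{C}}$ and $f=(a,\lambda)$ and to exploit the fact that every element of $G$ is a composition of powers of $f$ and $h$, which forces each $g\in G$ to have ratio $\lambda^{j}$ for some $j\in\mathbb{Z}$. For part $(i)$, I would first note that $h=(0,\lambda)$ and $f=(a,\lambda)$ both have ratio $\lambda$, so every word in $f^{\pm1},h^{\pm1}$ has ratio a power of $\lambda$, giving $\Lambda_{G}\subseteq\{\lambda^{j}:j\in\mathbb{Z}\}$; the reverse inclusion is immediate since $\lambda=\lambda^{1}\in\Lambda_{G}$ and $\Lambda_{G}$ is a subgroup by Lemma~\ref{L:00}. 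For the second assertion of $(i)$, if $b\in\Gamma_{G}$ then $b=\mathrm{Fix}(g')$ for some $g'=(b,\mu)\in G\backslash\mathcal{T}_{n}$ with $\mu=\lambda^{j}$; I would then produce the specific element $(b,\lambda)$ by conjugating $f$ or $h$ by a suitable element of $G$ fixing $b$, or by composing $g'$ with an appropriate power of $h$ to adjust the ratio back to $\lambda$ while preserving the fixed point $b$.

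For parts $(ii)$--$(iv)$, the key computational device is the formula for composing homotheties: if $p=(u,\lambda^{k})$ and $q=(v,\lambda^{m})$ then $p\circ q$ has ratio $\lambda^{k+m}$ and a center expressible as an affine combination of $u,v$ with coefficients built from the ratios. I would compute $f^{k}=(a,\lambda^{k})$ and $h^{k}=(0,\lambda^{k})=\lambda^{k}\,\mathrm{Id}$, then consider the commutator-type products. The relation $G_{1}=G\cap\mathcal{T}_{n}$ collects precisely those words whose total ratio is $1$, i.e.\ whose exponents of $\lambda$ sum to $0$; such a translation evaluated at $0$ gives an element of $G_{1}(0)$. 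The plan for $(ii)$ is to form the element $h^{-k}\circ f^{k}\circ h^{k}\circ f^{-k}$ or a similar ratio-$1$ combination and read off that its translation part, evaluated at $0$, equals $(\lambda^{k}-1)^{2}$ times the relevant center; I expect the factor $(\lambda^{k}-1)^{2}$ to emerge naturally from composing $f^{k}=(a,\lambda^{k})$ with its inverse after conjugation by the dilation $h^{k}$. Both inclusions $(\lambda^{k}-1)^{2}G_{1}(0)\subset G_{1}(0)$ and $(\lambda^{k}-1)^{2}\Gamma_{G}\subset G_{1}(0)$ should then follow by applying this to an arbitrary translation and to an arbitrary fixed point, respectively.

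For $(iii)$, the natural move is to conjugate a translation $t\in G_{1}$ by $h^{k}=\lambda^{k}\,\mathrm{Id}$: since $h^{k}\circ t\circ h^{-k}$ is again a translation in $G_{1}$ and $(\lambda^{k}\,\mathrm{Id})\circ T_{w}\circ(\lambda^{-k}\,\mathrm{Id})=T_{\lambda^{k}w}$, evaluating at $0$ yields $\lambda^{k}t(0)\in G_{1}(0)$, which is exactly the claim. For $(iv)$, assuming $\lambda^{k}\neq1$, I would use the identity that the fixed point of $(c,\lambda^{k})$ relates to translation parts via the standard formula $\mathrm{Fix}=\frac{1}{1-\lambda^{k}}(\text{translation part})$; concretely, given $w\in G_{1}(0)$ with $T_{w}\in G_{1}$, the composition $f^{k}\circ T_{w}$ (or $h^{k}\circ T_{w}$) has ratio $\lambda^{k}\neq1$ and hence lies in $G\backslash\mathcal{T}_{n}$, so its fixed point is in $\Gamma_{G}$ and equals $\frac{w}{1-\lambda^{k}}$ up to a known offset, giving $\frac{1}{1-\lambda^{k}}G_{1}(0)\subset\Gamma_{G}$; the inclusion $(1-\lambda^{k})\Gamma_{G}\subset\Gamma_{G}$ I would derive by combining this with $(ii)$ and $(iii)$, or directly by checking that scaling a fixed point by $(1-\lambda^{k})$ lands in the orbit structure already shown to lie in $\Gamma_{G}$.

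The main obstacle I anticipate is bookkeeping the center-composition formula correctly so that the exact factors $(\lambda^{k}-1)^{2}$, $\lambda^{k}$, and $\frac{1}{1-\lambda^{k}}$ come out precisely as stated rather than with spurious constants; in particular, verifying that the chosen ratio-$1$ word in $(ii)$ really produces the squared factor (and not just a single power of $\lambda^{k}-1$) will require carefully tracking how the conjugation by $h^{k}$ rescales the center of $f^{k}$ before and after cancellation. Once the composition arithmetic is pinned down, each inclusion is a one-line consequence of evaluating the appropriate group element at $0$ or reading off its fixed point, so the heart of the proof is entirely in that algebraic identity.
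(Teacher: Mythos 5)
Your plan for the first assertion of (i), for (ii) restricted to $G_{1}(0)$, for (iii), and for the first inclusion in (iv) is correct and is essentially the paper's own argument: ratios of words in $f^{\pm1},h^{\pm1}$ multiply, so $\Lambda_{G}=\{\lambda^{j}:j\in\mathbb{Z}\}$; for $w\in G_{1}(0)$ the element $g=T_{w}\circ h\circ T_{-w}=(w,\lambda)$ lies in $G$ and the commutator $g^{k}\circ h^{k}\circ g^{-k}\circ h^{-k}$ is the translation by $(\lambda^{k}-1)^{2}w$; conjugation $h^{k}\circ T_{w}\circ h^{-k}=T_{\lambda^{k}w}$ gives (iii); and $T_{w}\circ h^{k}=\bigl(\frac{w}{1-\lambda^{k}},\lambda^{k}\bigr)$ gives $\frac{w}{1-\lambda^{k}}\in\Gamma_{G}$. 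Two bookkeeping points you anticipated: the order matters ($f^{k}\circ T_{w}$ has fixed point $\frac{\lambda^{k}w}{1-\lambda^{k}}+a$, so you must use $T_{w}\circ h^{k}$), and your word $h^{-k}\circ f^{k}\circ h^{k}\circ f^{-k}$ produces $\lambda^{-k}(\lambda^{k}-1)^{2}a$ rather than $(\lambda^{k}-1)^{2}a$, a discrepancy absorbed by (iii).

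The genuine gap is the second assertion of (i), and it propagates to the second inclusion of (ii) and to your route to $(1-\lambda^{k})\Gamma_{G}\subset\Gamma_{G}$. Neither of your two devices produces $(b,\lambda)$: composing $g'=(b,\lambda^{j})$ with a power of $h$ does \emph{not} preserve the fixed point, since $h$ does not fix $b$ (for instance $g'\circ h^{1-j}$ has ratio $\lambda$ but fixed point $\frac{(1-\lambda^{j})b}{1-\lambda}$); and conjugation gives $w\circ f\circ w^{-1}=(w(a),\lambda)$, whose center is $w(a)$, so an element of $G$ ``fixing $b$'' is of no use --- you would need $w\in G$ with $w(a)=b$ or $w(0)=b$, i.e. $b\in G(a)\cup G(0)$, which is exactly what is unknown. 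The only elements of $G$ you can exhibit that fix $b$ are the powers $g'^{m}=(b,\lambda^{jm})$, whose ratios miss $\lambda$ whenever $|j|\geq2$. This is not an idea you failed to extract from the paper: the paper's step ``$(b,\lambda)=g_{1}^{-j+1}$'' is equally broken, since $g_{1}^{-j+1}$ has ratio $\lambda^{j(1-j)}$, never $\lambda$. In fact the assertion is false in general: every element of $G$ has the form $z\mapsto\lambda^{m}z+(1-\lambda)aP(\lambda)$ with $m\in\mathbb{Z}$ and $P\in\mathbb{Z}[\lambda,\lambda^{-1}]$ (this set is a group containing $h$ and $f$), so for transcendental $\lambda$ the point $b=\mathrm{Fix}(f\circ h)=\frac{a}{1+\lambda}$ lies in $\Gamma_{G}$ while $(b,\lambda)\in G$ would force $\frac{1}{1+\lambda}\in\mathbb{Z}[\lambda,\lambda^{-1}]$, which is impossible; the same point defeats $(\lambda^{k}-1)^{2}\Gamma_{G}\subset G_{1}(0)$. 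What does survive, and what you should prove instead, is the second inclusion of (iv): if $g'=(b,\lambda^{j})\in G$ witnesses $b\in\Gamma_{G}$, then $g'\circ h^{k}\circ g'^{-1}\circ h^{-k}=T_{(\lambda^{j}-1)(\lambda^{k}-1)b}$, so $(\lambda^{j}-1)(\lambda^{k}-1)b\in G_{1}(0)$, and dividing by $1-\lambda^{j}$ via the first half of (iv) yields $(1-\lambda^{k})b\in\Gamma_{G}$ --- the factor attached to $b$ is $(\lambda^{j}-1)(\lambda^{k}-1)$ with $j$ depending on $b$, not the uniform $(\lambda^{k}-1)^{2}$ claimed in (ii).
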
\
\medskip

\begin{proof}Let $k\in\mathbb{Z}^{*}$ such that $\lambda^{k}\neq1$.\
\\
 (i) Let $b\in \Gamma_{G}$ and $g_{1}=(b,\mu)\in G\backslash\mathcal{T}_{1}$, so $g_{1}=h^{n_{1}}\circ
 f^{m_{1}}\circ\dots\circ h^{n_{p}}\circ f^{m_{p}}$ for some $p\in
 \mathbb{N}$ and $n_{1},m_{1},\dots,n_{p},m_{p}\in\mathbb{Z}$. Then
 $$g_{1}(z)=\lambda^{n_{1}}\left(\lambda^{m_{1}}\left(\dots(\lambda^{n_{p}}(\lambda^{m_{p}}(z-a)+a)\dots-a\right)+a\right),\ \ \ z\in\mathbb{C}$$
 It follows that
 $\mu=\lambda^{j}$ with $j=n_{1}+m_{1}+\dots+n_{p}+m_{p}$ and so $\Lambda_{G}=\{\lambda^{j},\ \
 j\in\mathbb{Z}\}$.\ It follows that
 $g=(b,\lambda)=g_{1}^{-j+1}\in G$.\
\\
\\
 (ii)  Let $a\in G_{1}(0)$ and  $g=T_{a}\circ h \circ T_{-a}$, so $g=(a, \lambda)$ and
$g^{k}\circ h^{k}\circ g^{-k}\circ
h^{-k}(z)=z+(1-\lambda^{k})^{2}a$,  $z\in\mathbb{C}$. So $(\lambda^{k}-1)^{2}a\in G_{1}(0)$.\
\
\\
\\
Let $b\in \Gamma_{G}$. By (i),  $g=(b,\lambda)\in G$ then $g^{k}\circ h^{k}\circ g^{-k}\circ
h^{-k}(z)=z+(1-\lambda^{k})^{2}b$,  $z\in\mathbb{C}$. So $(1-\lambda^{k})^{2}b\in G_{1}(0)$.\
 \\
 \\
 (iii) Let $b\in G_{1}(0)$. We have $T_{b}\in G_{1}$ and
\begin{align*}
h^{k}\circ T_{b}\circ h^{-k}(z)& =\lambda^{k}(\lambda^{-k}z+b)\\
\ & =z+\lambda^{k}b,
\end{align*} then
 $\lambda^{k}b\in G_{1}(0)$\
  \\
 \\
 (iv) Let $a\in G_{1}(0)$. We have $T_{a}\in G_{1}$ and
\begin{align*}
T_{a}\circ h^{k}(z)& =\lambda^{k}z+a\\
\ &
=\lambda^{k}\left(z-\frac{a}{1-\lambda^{k}}\right)+\frac{a}{1-\lambda^{k}},
\end{align*} then
$f_{1}=T_{a}\circ
h^{k}=\left(\frac{a}{1-\lambda^{k}},\lambda^{k}\right)$, so
 $$\frac{a}{1-\lambda^{k}}\in \Gamma_{G}\ \ \ \ (1).$$  \
\\
\\
Let $b\in \Gamma_{G}$. By (ii), $(1-\lambda^{k})^{2}b\in G_{1}(0)$ and by (1), $\frac{(1-\lambda^{k})^{2}b}{1-\lambda^{k}}=(1-\lambda^{k})b\in\Gamma_{G}$.\
\end{proof}
\medskip

\begin{lem} \label{L:bb10+} Let $G$ be a subgroup of $\mathcal{H}(1, \mathbb{C})$  with $0\in\Gamma_{G}$. Then:\
\\
  $(i)$ $\Lambda_{G}z+G_{1}(0)\subset G(z)\subset \Lambda_{G}z+G(0)$ for every $z\in \mathbb{C}$.
  \
  \\
   $(ii)$ $(1-\lambda)\Gamma_{G}\cup G_{1}(0)\subset G(0)\subset  G_{1}(0)\cup\Gamma_{G}$.
\end{lem}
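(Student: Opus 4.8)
The plan is to reduce everything to the normal form of the elements of $G$ and then to feed it into Lemma ~\ref{L:bb10}. First I would record that any $f=(a,\nu)\in G$ satisfies $f(z)=\nu(z-a)+a=\nu z+(1-\nu)a$, so that $f(0)=(1-\nu)a$ and hence $f(z)=\nu z+f(0)$ for every $z\in\mathbb{C}$; here $\nu\in\Lambda_G$, and by Lemma ~\ref{L:bb10}(i) we may write $\nu=\lambda^{k}$ for some $k\in\mathbb{Z}$. This single identity drives both parts.

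For the upper inclusions I would argue directly from this normal form. Given $f\in G$, the relation $f(z)=\nu z+f(0)$ shows $f(z)\in\Lambda_G z+G(0)$, proving $G(z)\subset\Lambda_G z+G(0)$ with no extra hypothesis. For the upper inclusion in $(ii)$ I would split $G(0)=\{f(0):f\in G\}$ according to whether $f$ is a translation: if $f\in G_1$ then $f(0)\in G_1(0)$, whereas if $f=(a,\nu)$ with $\nu=\lambda^{k}\neq1$ then $a\in\Gamma_G$ and $f(0)=(1-\lambda^{k})a\in\Gamma_G$ by Lemma ~\ref{L:bb10}(iv). Thus $G(0)\subset G_1(0)\cup\Gamma_G$.

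For the lower inclusions I would use the hypothesis $0\in\Gamma_G$ essentially. By Lemma ~\ref{L:bb10}(i), every $b\in\Gamma_G$ yields $(b,\lambda)\in G$; applied to $b=0$ this gives $h_0:=(0,\lambda)=\lambda\,\mathrm{id}_{\mathbb{C}}\in G$, and hence $h_0^{k}=(0,\lambda^{k})\in G$ for every $k$. Now fix $\nu=\lambda^{k}\in\Lambda_G$ and $t\in G_1(0)$, so that $T_t\in G_1\subset G$; then $T_t\circ h_0^{k}\in G$ and $(T_t\circ h_0^{k})(z)=\nu z+t$, giving $\nu z+t\in G(z)$. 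Letting $\nu$ and $t$ vary yields $\Lambda_G z+G_1(0)\subset G(z)$. For the lower inclusion in $(ii)$, $G_1\subset G$ gives $G_1(0)\subset G(0)$ at once, and for $a\in\Gamma_G$ the element $(a,\lambda)\in G$ (again Lemma ~\ref{L:bb10}(i)) has $(a,\lambda)(0)=(1-\lambda)a$, so $(1-\lambda)\Gamma_G\subset G(0)$.

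The only genuinely delicate point is the lower inclusion in $(i)$: it requires realizing each ratio $\nu\in\Lambda_G$ by an element of $G$ fixing $0$ (that is, $(0,\nu)\in G$), because an arbitrary representative $(c,\nu)$ only gives $(c,\nu)(0)=(1-\nu)c$, which in general need not lie in $G_1(0)$. This is exactly where the assumption $0\in\Gamma_G$ is indispensable: combined with Lemma ~\ref{L:bb10}(i) it simultaneously guarantees $\Lambda_G=\{\lambda^{k}\}$ and produces $h_0=(0,\lambda)\in G$, whose powers realize every ratio centered at the origin. I expect the bulk of the care to go into this step and into the translation-part bookkeeping; the remaining inclusions are then formal consequences of the normal form together with Lemma ~\ref{L:bb10}.
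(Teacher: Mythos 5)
Your proposal is correct and is essentially the paper's own proof: the upper inclusions come from the normal form $f(z)=\nu z+f(0)$ together with the translation/non-translation case split, and the lower inclusions come from realizing each ratio $\nu\in\Lambda_{G}$ and each $t\in G_{1}(0)$ by an explicit element of $G$, exactly as the paper does. The only (cosmetic) difference is that you build this element directly as $T_{t}\circ h_{0}^{k}$, whereas the paper routes through Lemma~\ref{L:bb10}(iv) and (i) to exhibit it in center--ratio form as $\left(\frac{t}{1-\nu},\nu\right)$ — these are the same affine map, and your version even handles $\nu=1$ without the division; note that both your argument and the paper's rest on Lemma~\ref{L:bb10}(i)'s conclusion that every $\nu\in\Lambda_{G}$ is a power of the single $\lambda$ with $\lambda\,\mathrm{id}_{\mathbb{C}}\in G$, a property proved there only for the two-generator groups of that lemma, so this hypothesis is implicitly being carried along in both proofs.
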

   \medskip

\begin{proof} Let $h=\lambda id_{\mathbb{C}^{n}}\in G$ for some $\lambda\in \Lambda_{G}$, since $0\in\Gamma_{G}$ and let $f'\in G$ with
$f'\circ h \neq h\circ f'$, so  $f=f'\circ h \circ f'^{-1}=(a, \lambda)$ for some $a\in\Gamma_{G}$.\
\\
\\
 {\it Proof of (i):} Let $g=(b, \mu)\in G$, so
 $$g(z)=\left\{\begin{array}{c}
                 \mu(z-b)+b=\mu z+(1-\mu)b,\ \ \ \  if\ g\in G\backslash \mathcal{T}_{1}\ \ \\
                 z+b, \ \ \ \ \ \ \ \ \ \ \ \  \ \ \ \ \ \ \ \ \ \ \ \ \ \ \ \ \ \ \ \ \ \  if\ g\in
                 G_{1}\ \ \ \ \
                                \end{array}
 \right. \ \ \ \ \ \ \ \ \ \ \ \ (2)$$
 By (2), $b,(1-\mu)b\in G(0)$, so
 $G(z)\subset\Lambda_{G}z+G(0)$. Conversely, let
 $\mu\in\Lambda_{G}$, $a\in G_{1}(0)$ so $T_{a}\in G$.  By Lemma ~\ref{L:bb10},(iv),
 $a'=\frac{a}{1-\mu}\in \Gamma_{G}$ and by Lemma ~\ref{L:bb10},(i),  $g=(a',\mu)\in G\backslash
 \mathcal{T}_{1}$. Then $g(z)=\mu(z-a')+a'=\mu z+(1-\mu)a'$, thus
 $g(z)=\mu z + a\in G(z)$. It follows that
 $\Lambda_{G}z+G_{1}(0)\subset G(z)$.
 \
 \\
 \\
 {\it Proof of (ii):} Let $b\in G(0)$, so $b=f(0)$, for some $f=(a,\mu)\in G$. By (2), $b=a\in G_{1}(0)$ if $f\in G_{1}$ and $a\in\Gamma_{G}$ if $f\in G\backslash \mathcal{T}_{1}$.
 By Lemma ~\ref{L:bb10},(i), $\mu=\lambda^{k}\neq1$ for some $k\in \mathbb{Z}^{*}$, then $b=(1-\lambda^{k})a$ and by Lemma ~\ref{L:bb10},(iv), $b\in \Gamma_{G}$.
  It follows that $G(0)\subset G_{1}(0)\cup \Gamma_{G}$. \
 \\
Let $b\in \Gamma_{G}$. By Lemma ~\ref{L:bb10},(i),  $g=(b, \lambda)\in G\backslash \mathcal{T}_{1}$, so $g(0)=(1-\lambda)b\in G(0)$. Then
$(1-\lambda)\Gamma_{G}\subset G(0)$. As $G_{1}(0)\subset G(0)$, the results follows.
\end{proof}
\medskip

Notice that the following Lemma is a consequence of Theorems 2.1
and 3.1  given in \cite{mW}, for a closed subgroup of
$\mathbb{R}^{n}$, by identifying $\mathbb{C}^{n}$ to
$\mathbb{R}^{2n}$, we obtain:
\medskip

\begin{lem}\label{L:9--} Let $H$ be a closed subgroup of $\mathbb{C}$. Then:\
\\
(1) If $H$ is discrete then $H=\mathbb{Z}a$ or
$H=\mathbb{Z}a+\mathbb{Z}b$, for some basis $(a,b)$ of $\mathbb{C}$ over $\mathbb{R}$.\
\\
(2) If $H$ is not discrete then there is one of
the following:
\begin{itemize}
  \item [(i)] $H=\mathbb{C}$.\
  \item [(ii)] $H=\mathbb{R}a$, for some $a\in \mathbb{C}$.\
  \item [(iii)] $H=\mathbb{R}a+ \mathbb{Z}b$, for some basis $(a, b)$ of $\mathbb{C}$ over $\mathbb{R}$.
\end{itemize}
\end{lem}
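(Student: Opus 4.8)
The plan is to give a self-contained elementary argument, treating $\mathbb{C}$ as $\mathbb{R}^{2}$ and reducing everything to the one-dimensional classification of closed subgroups of $\mathbb{R}$: a closed subgroup $D\subset\mathbb{R}$ is either $\{0\}$, of the form $c\mathbb{Z}$ for some $c>0$, or all of $\mathbb{R}$. First I would establish this one-dimensional fact, since it is the engine of both parts. A nontrivial closed subgroup $D\subset\mathbb{R}$ either has $0$ as an isolated point, in which case the smallest positive element $c$ of $D$ generates $D=c\mathbb{Z}$, or $0$ is an accumulation point, in which case $D$ contains arbitrarily small positive elements, is therefore dense, and hence equals $\mathbb{R}$ by closedness.

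For part $(1)$, assume $H$ is discrete; we may take $H\neq\{0\}$. Since $H$ is closed and discrete, $H\cap\{z\in\mathbb{C}:|z|\le r\}$ is compact and discrete, hence finite for every $r$, so we may pick $a\in H\setminus\{0\}$ of minimal modulus, giving $\mathbb{Z}a\subset H$. If $H\subset\mathbb{R}a$, repeating the minimality argument on the line $\mathbb{R}a$ yields $H=\mathbb{Z}a$. Otherwise I would choose $b\in H$ off the line $\mathbb{R}a$ at minimal distance from $\mathbb{R}a$ and, after subtracting a suitable integer multiple of $a$ to push its component along $a$ into a fundamental strip, use minimality to force $H=\mathbb{Z}a+\mathbb{Z}b$ with $(a,b)$ an $\mathbb{R}$-basis of $\mathbb{C}$.

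For part $(2)$, assume $H$ is not discrete, so $0$ is an accumulation point and there are $h_{k}\in H\setminus\{0\}$ with $h_{k}\to0$. The crucial step is to extract a full real line inside $H$: passing to a subsequence, $h_{k}/|h_{k}|\to u$ with $|u|=1$, and for each $t\in\mathbb{R}$ the integers $n_{k}=\lfloor t/|h_{k}|\rfloor$ satisfy $n_{k}|h_{k}|\to t$, hence $n_{k}h_{k}\to tu$; since $n_{k}h_{k}\in H$ and $H$ is closed, $tu\in H$, so $L:=\mathbb{R}u\subset H$. I would then pass to the quotient $\pi:\mathbb{C}\to\mathbb{C}/L\cong\mathbb{R}$. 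Because $L\subset H$, we have $H=\pi^{-1}(\pi(H))$, so $\pi(H)$ is a closed subgroup of the line $\mathbb{C}/L$; applying the one-dimensional classification to $\pi(H)$ produces exactly the three cases. Namely $\pi(H)=\{0\}$ gives $H=\mathbb{R}u$ (case (ii)); $\pi(H)=c\mathbb{Z}$ gives, after lifting $c$ to some $b\in H$ with $\pi(b)=c$, the decomposition $H=\mathbb{R}u+\mathbb{Z}b$ with $(u,b)$ a basis (case (iii)); and $\pi(H)=\mathbb{C}/L$ gives $H=\pi^{-1}(\mathbb{C}/L)=\mathbb{C}$ (case (i)).

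The main obstacle I anticipate is the line-extraction step in part $(2)$: one must verify that the normalized sequence $h_{k}/|h_{k}|$ has a limit direction $u$ for which the \emph{entire} line $\mathbb{R}u$, not merely a dense subset of it, lies in $H$. Closedness of $H$ is essential here, and the approximation $n_{k}h_{k}\to tu$ works precisely because $|h_{k}|\to0$ makes the sampling step shrink to zero. A secondary point requiring care is the claim that $\pi(H)$ is closed, which would fail for an arbitrary subgroup but holds here because $H$ contains the kernel $L$ and is therefore saturated for $\pi$.
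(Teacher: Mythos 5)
Your argument is correct, but it follows a genuinely different route from the paper, because the paper gives no proof at all: the authors simply declare the lemma to be a consequence of Theorems 2.1 and 3.1 of Waldschmidt's notes \cite{mW} on closed subgroups of $\mathbb{R}^{n}$, specialized to $n=2$ via the identification $\mathbb{C}\cong\mathbb{R}^{2}$. What you do instead is reconstruct, in dimension two, the elementary proof standing behind that citation: the one-dimensional trichotomy $\{0\}$, $c\mathbb{Z}$, $\mathbb{R}$ as the engine; in the discrete case a minimal-modulus generator $a$ (existence justified by the correct remark that a closed discrete set meets every ball in a finite set) together with a second generator $b$ at minimal distance from $\mathbb{R}a$, where your fundamental-strip reduction is precisely what guarantees both that this minimum is attained and that $H=\mathbb{Z}a+\mathbb{Z}b$; in the non-discrete case the extraction of a full line $\mathbb{R}u\subset H$ from a null sequence $h_{k}\to 0$ via $n_{k}h_{k}\to tu$ and closedness, followed by passage to the quotient $\pi:\mathbb{C}\to\mathbb{C}/\mathbb{R}u\cong\mathbb{R}$. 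Your observation that $\pi(H)$ is closed only because $H$ is saturated (i.e. $H=\pi^{-1}(\pi(H))$, since $\ker\pi=\mathbb{R}u\subset H$) is exactly the point that would fail for an arbitrary subgroup and is the one subtlety a careless quotient argument misses. Your route buys self-containedness: the lemma no longer rests on an external set of course notes, and the line-extraction technique it makes explicit is in the same spirit as arguments the paper itself uses elsewhere (e.g. in Lemma 3.1, where non-discreteness of $G_{1}'(0)$ is derived from $\|a_{k}-a_{k+1}\|\to 0$). The paper's citation buys brevity and the general $\mathbb{R}^{n}$ statement, of which this lemma is the two-dimensional instance.
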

\medskip

\section{{\bf Some results for the case n=1}}

In this section, we study the case when $n=1$ and $G$ is generated by $f=(a, \lambda)$ and $g=(b, \mu)$ for some
$\lambda\in\mathbb{C}\backslash \mathbb{R}$, $\mu\in\mathbb{C}$ and $a,b\in\mathbb{C}^{n}$ with $a\neq b$.
\medskip

\subsection{{\bf Case: $\mathbf{|\lambda|\neq 1}$}}\

\begin{lem}\label{L:9} Let $\lambda\in\mathbb{C}\backslash S^{1}$, $\mu\in \mathbb{C}$ and   $a,b\in \mathbb{C}^{n}$
with $a\neq b$.  If  $G$  is the group generated by
$f=(a, \lambda)$  and  $g=(b,\mu)$  then  $\overline{G_{1}(z)}=\mathbb{C}(b-a)+a$ for every $z\in \mathbb{C}(b-a)+a$.
In particular,  $\overline{G(z)}=\mathbb{C}(b-a)+a$ for every $z\in \mathbb{C}(b-a)+a$.
\end{lem}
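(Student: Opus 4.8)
The plan is to reduce everything to the complex line $\Delta:=\mathbb{C}(b-a)+a$ and then identify the closure of the translation subgroup $G_1$ as a closed subgroup of $\mathbb{C}$. By Lemma~\ref{L:222}, $\Delta$ is $G$-invariant and the restriction $G_{/\Delta}$ is a subgroup of $\mathcal{H}(1,\mathbb{C})$, so every orbit through a point of $\Delta$ stays in $\Delta$. Identifying $\Delta$ with $\mathbb{C}$ via $t\mapsto t(b-a)+a$ (so that $a\leftrightarrow 0$, $b\leftrightarrow 1$), the generators restrict to $f=\lambda\,Id_{\mathbb{C}}$ and $g=(1,\mu)$. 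Since $G_1$ consists of translations, $G_1(z)=z+G_1(0)$ and hence $\overline{G_1(z)}=z+\overline{G_1(0)}$; thus it suffices to prove $\overline{G_1(0)}=\mathbb{C}$. The ``in particular'' assertion will then follow at once from the inclusions $G_1(z)\subset G(z)\subset\Delta$ together with $\overline{G_1(z)}=\Delta$.

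Next I would exhibit an explicit nonzero translation in $G$ and use the scaling by $\lambda$. A direct computation gives $g f g^{-1}=(1-\mu,\lambda)$, so $(gfg^{-1})\circ f^{-1}$ is the translation by $v_0:=(1-\mu)(1-\lambda)$, which is nonzero because $\mu\neq 1$ (as $g$ is a genuine homothety) and $\lambda\neq 1$; hence $T_{v_0}\in G_1$. Conjugating any $T_v\in G_1$ by powers of $f$ gives $f^{k}T_v f^{-k}=T_{\lambda^{k}v}$, so $G_1(0)$ is stable under multiplication by $\lambda^{\pm1}$ (this is Lemma~\ref{L:bb10}(iii)). Consequently $H:=\overline{G_1(0)}$ is a closed subgroup of $\mathbb{C}$ satisfying $\lambda H=H$ and containing all $\lambda^{k}v_0$; since $|\lambda|\neq 1$, these points accumulate at $0$, so $H$ is not discrete.

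Finally I would invoke the classification of closed subgroups of $\mathbb{C}$ in Lemma~\ref{L:9--}: a non-discrete such $H$ must be $\mathbb{C}$, a real line $\mathbb{R}\alpha$, or $\mathbb{R}\alpha+\mathbb{Z}\beta$. In the last two cases the connected component of the identity is the real line $\mathbb{R}\alpha$, and since $z\mapsto\lambda z$ is a linear automorphism of $\mathbb{C}$ with $\lambda H=H$, it must carry this identity component to itself, forcing $\lambda\alpha\in\mathbb{R}\alpha$, that is $\lambda\in\mathbb{R}$. This contradicts the standing hypothesis $\lambda\in\mathbb{C}\setminus\mathbb{R}$ of the section, leaving only $H=\mathbb{C}$. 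I expect this last step to be the main subtlety, namely the passage from $\lambda$-invariance of $H$ to $\lambda$-invariance of its identity component, which is what rules out the one-dimensional closed subgroups; by comparison the reduction through Lemma~\ref{L:222} and the check that $v_0\neq 0$ are routine.
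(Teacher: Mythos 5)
Your argument is correct, and its skeleton matches the paper's: restrict to the invariant line via Lemma~\ref{L:222}, produce translations in $G$, show that $H=\overline{G_{1}(0)}$ is a closed, non-discrete subgroup of $\mathbb{C}$ stable under multiplication by $\lambda$, then apply the classification of Lemma~\ref{L:9--} together with the standing hypothesis $\lambda\notin\mathbb{R}$ (which, as you correctly note, the statement's own hypothesis $\lambda\notin S^{1}$ does not supply but the section's preamble does). Where you genuinely diverge is in the execution of the two key steps. For non-discreteness, the paper first normalizes $\mu=\lambda$ and $|\lambda|>1$ and estimates the differences $a_{k}-a_{k+1}$ of the commutator translations $a_{k}=-(\lambda^{k}-1)^{2}(b-a)$; you instead take a single translation $T_{v_{0}}=(gfg^{-1})\circ f^{-1}$ with $v_{0}=(1-\mu)(1-\lambda)\neq 0$ and scale it, $\lambda^{k}v_{0}\to 0$, which is shorter and needs no normalization. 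More substantively, to exclude the one-dimensional closed subgroups the paper treats the two non-discrete cases of Lemma~\ref{L:9--} separately: the line $\alpha\mathbb{R}(a-b)$ by the same $\lambda$-invariance idea you use, but the case $\mathbb{R}\alpha+\mathbb{Z}\beta$ by a considerably more technical construction (a limit translation $T_{u}\in\overline{G'_{1}}$, the conjugate $g_{1}=(u,\lambda)$, and arbitrarily small nonzero vectors in the $\beta$-direction contradicting discreteness in that direction). Your observation that $z\mapsto\lambda z$ is a homeomorphism fixing $0$ and preserving $H$, hence preserving the identity component $\mathbb{R}\alpha$ of $H$, so that $\lambda\alpha\in\mathbb{R}\alpha$ forces $\lambda\in\mathbb{R}$, kills both cases at once and replaces the paper's most delicate paragraph by a two-line topological-group argument; it also sidesteps the paper's unstated need to check that its small vector $b_{k_{0}}-b_{k_{0}+1}$ is nonzero. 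The one hypothesis you use implicitly, $\mu\neq 1$ (so that $v_{0}\neq 0$), is equally implicit in the paper's proof and in the notation $g=(b,\mu)$, so there is no gap there.
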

\medskip

\begin{proof} We can assume that $\mu=\lambda$, otherwise we replace $g$ by $g\circ f \circ g^{-1}$ and so $G$ will  be the group generated by\ $f=(a, \lambda)$  and
 $g=(b,\lambda)$. Suppose that $|\lambda|>1$ (leaving to replace  $f$ by $f^{-1}$).
\
\\
$(i)$ Firstly, we will show that $G_{1}$ is not discrete.  Denote
by $G'=T_{-a}\circ G \circ T_{a}$, then $G'$ is generated by
$h=T_{-a}\circ f \circ T_{a}$ and $g'=T_{-a}\circ g \circ T_{a}$.
We obtain
 $h=\lambda.id_{\mathbb{C}^{n}}$ and $g'=(b-a, \lambda)$.

Therefore $h^{k}\circ g'^{k}\circ h^{-k}\circ g'^{-k}(z)=z-(\lambda^{k}-1)^{2}(b-a)$,  $z\in\mathbb{C}^{n}$ for every $k\in\mathbb{Z}$.
 Write $T_{a_{k}}=h^{k}\circ g'^{k}\circ h^{-k}\circ g'^{-k}$ is the translation by $a_{k}=- (\lambda^{k}-1)^{2}(b-a)$. One has \begin{align*}
 a_{k}-a_{k+1}& = \left((\lambda^{k+1}-1)^{2}- (\lambda^{k}-1)^{2}\right)(b-a)\\
 \ & =\lambda^{k}(\lambda-1)(\lambda^{k+1}+\lambda^{k}-2)(b-a).
\end{align*}

Since
 $|\lambda|>1$, it follows that $$\underset{k\to-\infty}{lim}\|a_{k}-a_{k+1}\|=0.\ \ \ \ \  \ \ \ \ (1)$$
\\
so  $G'_{1}(0)$ can not be discrete.
\
\\
\\
$(ii)$ Secondly,  suppose that $\overline{G'_{1}(0)}\neq \mathbb{C}(b-a)$, then by (i) and  Lemma~\ref{L:9--}, there are two cases:
\\
$\bullet$ Suppose that $\overline{G'_{1}(0)}=(\mathbb{R}\alpha +\mathbb{Z}\beta)(b-a)$,
 for some basis $(\alpha(b-a),\beta(b-a))$ of $\mathbb{C}(b-a)$ over $\mathbb{R}$. Let $u=\beta(b-a)$ and $T_{u}$
the translation by $u$. See that $u\in\overline{G'_{1}(0)}\subset
\overline{G'(0)}$ and so $G'(u)\subset \overline{G'(0)}$. Remark
that $T_{u}\in \overline{G'_{1}}$, where $\overline{G'_{1}}$ is
the closure of $G'_{1}$ in $\mathcal{T}_{n}$, then
$g_{1}=T_{-u}\circ h \circ T_{u}\in \overline{G'}$, so
$g_{1}=(u,\lambda)$. Let $b_{k}=- (\lambda^{k}-1)^{2}u$ and
$T_{b_{k}}$ be the translation by $b_{k}$, $k\in \mathbb{Z}$. As
above, we have  $T_{b_{k}}=h^{k}\circ g_{1}^{k}\circ h^{-k}\circ
g_{1}^{-k}\in G'_{2}\cap \mathcal{T}_{n}$, since $h$ and $g_{1}\in
G'_{2}$,  for every $k\in \mathbb{Z}$. Therefore, by $(1)$,
$\underset{k\to-\infty}{lim}\|b_{k}-b_{k+1}\|=0$, so
$\|b_{k_{0}}-b_{k_{0}+1}\|<\frac{1}{2}$,
 for some $k_{0}\in \mathbb{Z}$. Let $v=b_{k_{0}}-b_{k_{0}+1}$ then
 $v=\beta\left((\lambda^{k_{0}+1}-1)^{2}-(\lambda^{k_{0}}-1)^{2}\right)(b-a)\in\beta\mathbb{R}(b-a)$, so $v\notin (\alpha\mathbb{R} +\beta\mathbb{Z})(a-b)=\overline{G'_{1}(0)}$ since $\|v\|<\frac{1}{2}$, a contradiction, because
  $v=T_{b_{k_{0}}}\circ T_{b_{k_{0}+1}}(0)\in (G'_{2}\cap \mathcal{T}_{n})(0)\subset \overline{G'_{1}(0)}$.\
  \\
 $\bullet$  Suppose that $\overline{G'_{1}(0)}=\alpha\mathbb{R}(a-b)$, for some $\alpha\in\mathbb{C}^{*}$. As
  $\lambda\in\mathbb{C}\backslash \mathbb{R}$, so $\alpha\mathbb{R}(a-b)$ can not be invariant by $h$. On the other hand,
  $G'_{1}(0)\subset \alpha\mathbb{R}(a-b)$, then for any $T_{v}\in G'_{1}$, one has
   $v\in \alpha\mathbb{R}(a-b)$ , so $T'=h\circ T_{v} \circ h^{-1}=T_{h(v)}\in G'_{1}$, hence $h(v)=\lambda v\in \alpha\mathbb{R}(a-b)$, a contradiction.\
   \\
   \\
$(iii)$ Finally, we conclude that $\overline{G'_{1}(0)}=\mathbb{C}(a-b)$ and  by Lemma ~\ref{L:222}, $\overline{G'(0)}=\mathbb{C}(a-b)$.
It follows that
$\overline{G(a)}=T_{b}(\mathbb{C}(a-b))=\mathbb{C}(b-a)+a$.
\end{proof}
\bigskip

\subsection{{\bf Case: $\mathbf{|\lambda|= 1}$}}\ In this case, write $\lambda=e^{i\theta}$,
$\theta\in\mathbb{R}$. We identify $\mathbb{C}$ to
$\mathbb{R}^{2}$, by the isomorphism $\varphi:
z=x+iy\longrightarrow (x,y)$. State the following results:

\begin{thm}\label{T:001} Let $G$ be a group generated by $h=e^{i\theta}Id_{1}$ and $f=(a, e^{i\theta})$,
$a\in \mathbb{C}^{*}$. Then there is one of the following:\
\\
(i) Every orbit of $G$ is dense in $\mathbb{C}$. In this case $f\notin \mathcal{SR}_{1}$.\
\\
(ii) Every orbit of $G$ is closed and discrete. In this case $f\in \mathcal{SR}_{1}$.
\end{thm}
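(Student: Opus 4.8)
The plan is to exploit that $h$ and $f$ share the same ratio $\lambda=e^{i\theta}$, so that $G\subset\mathcal{R}_{1}$ and, by Lemma~\ref{L:bb10}(i), $\Lambda_{G}=\{\lambda^{k}:k\in\mathbb{Z}\}$. Put $c=(1-\lambda)a$; this is nonzero because $a\neq 0$ and $\lambda\neq 1$ (indeed $G$ is non abelian, since by Lemma~\ref{L:6}(i) the maps $h=(0,\lambda)$ and $f=(a,\lambda)$ commute only if $a=0$ or $\lambda=1$). One checks $T_{c}=f\circ h^{-1}\in G_{1}$ and $h^{k}\circ T_{c}\circ h^{-k}=T_{\lambda^{k}c}$. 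First I would show that $G_{1}$ is generated by $\{T_{\lambda^{k}c}:k\in\mathbb{Z}\}$: the subgroup $N$ these generate is normal in $G=\langle h,T_{c}\rangle$ and contained in $G_{1}$, while $G/N$ is cyclic generated by the image of $h$; since $G/G_{1}\cong\Lambda_{G}$ is also cyclic generated by $\lambda$, the natural surjection $G/N\to G/G_{1}$ is an isomorphism, forcing $N=G_{1}$. Consequently $G_{1}(0)=\sum_{k\in\mathbb{Z}}\mathbb{Z}\lambda^{k}c$, and decomposing $G=\bigcup_{m}h^{m}G_{1}$ together with the $\lambda$-invariance of $G_{1}(0)$ (Lemma~\ref{L:bb10}(iii)) gives $G=\{z\mapsto\lambda^{m}z+t:\ m\in\mathbb{Z},\ t\in G_{1}(0)\}$, hence $G(z)=\bigcup_{m\in\mathbb{Z}}\bigl(\lambda^{m}z+G_{1}(0)\bigr)$ for every $z$.

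The dichotomy is then governed by the arithmetic of $\theta$, and the step I would record next is the equivalence matching the two alternatives to the two membership conditions: inspecting $H_{2}$ and $H_{3}$ shows $\theta\in H_{2}\cup H_{3}$ exactly when $\cos\theta\in\{0,\pm\tfrac12,\pm1\}$, i.e. when $2\cos\theta\in\mathbb{Z}$. Thus $f\in\mathcal{SR}_{1}$ if and only if $2\cos\theta\in\mathbb{Z}$.

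Suppose first $2\cos\theta\notin\mathbb{Z}$, i.e. $f\notin\mathcal{SR}_{1}$. Let $H=\overline{G_{1}(0)}$, a nonzero (as $c\neq 0$) closed subgroup of $\mathbb{C}$ invariant under multiplication by $\lambda$. I claim $H=\mathbb{C}$, arguing through Lemma~\ref{L:9--}: if $H=\mathbb{Z}u$ or $H=\mathbb{R}u$, invariance gives $\lambda u\in\mathbb{R}u$, so $\lambda\in\mathbb{R}$ and $\lambda=-1$, whence $2\cos\theta=-2\in\mathbb{Z}$, a contradiction; if $H=\mathbb{R}u+\mathbb{Z}v$, the continuous automorphism $w\mapsto\lambda w$ must preserve the identity component $\mathbb{R}u$, giving the same contradiction; and if $H=\mathbb{Z}u+\mathbb{Z}v$ is a lattice, then $w\mapsto\lambda w$ is represented in the basis $(u,v)$ by a matrix $M\in GL_{2}(\mathbb{Z})$ similar over $\mathbb{R}$ to the rotation of angle $\theta$, so $2\cos\theta=\mathrm{tr}(M)\in\mathbb{Z}$, impossible. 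Hence $H=\mathbb{C}$, and since $z+G_{1}(0)\subset G(z)$ (the term $m=0$) we get $\overline{G(z)}\supset z+H=\mathbb{C}$, so every orbit is dense.

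Suppose instead $2\cos\theta\in\mathbb{Z}$, i.e. $f\in\mathcal{SR}_{1}$. Then $\lambda+\lambda^{-1}=2\cos\theta\in\mathbb{Z}$ yields $\lambda^{2}=2\cos\theta\,\lambda-1$, so every $\lambda^{k}c$ lies in $\mathbb{Z}c+\mathbb{Z}\lambda c$, whence $G_{1}(0)=\mathbb{Z}c+\mathbb{Z}\lambda c$. This is discrete: it equals $\mathbb{Z}c$ when $\lambda=-1$, and otherwise $\lambda\notin\mathbb{R}$ makes $(c,\lambda c)$ an $\mathbb{R}$-basis of $\mathbb{C}$, so $G_{1}(0)$ is a lattice. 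Moreover $\Lambda_{G}$ is now finite, say of order $q$, so $G(z)=\bigcup_{m=0}^{q-1}(\lambda^{m}z+G_{1}(0))$ is a finite union of translates of a closed discrete subgroup and is therefore closed and discrete. The main work, and the part I expect to be most delicate, is this discrete direction: establishing $N=G_{1}$ rigorously and carrying out the crystallographic trace computation cleanly; once $\lambda$-invariance of $\overline{G_{1}(0)}$ is in hand, the dense direction reduces to the short case analysis on Lemma~\ref{L:9--} above.
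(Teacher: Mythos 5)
Your proposal is correct, and it takes a genuinely different route from the paper's. The paper proves Theorem~\ref{T:001} by combining Corollary~\ref{CC:01} and Proposition~\ref{p:b10}, which means splitting on the rationality of $\theta/\pi$: for $\theta\notin\pi\mathbb{Q}$, density is proved geometrically (Proposition~\ref{p:a10}: the commutator of $h$ and $f$ is a nontrivial translation, rotation orbits are dense in circles, and local density plus connectedness of $\mathbb{R}^{2}$ concludes); for $\theta\in\pi(\mathbb{Q}\setminus\mathbb{Z})$, Lemma~\ref{L:b10} shows orbits are dense or closed and discrete, Proposition~\ref{p:b10} (via Lemmas~\ref{L:LL1} and~\ref{L:b101}, an explicit $SL(2,\mathbb{Z})$ computation) shows discreteness forces $\cos\theta\in\frac{1}{2}\mathbb{Z}$, and Lemma~\ref{L:1b} handles $\theta\in H_{2}\cup H_{3}$ by exhibiting an invariant lattice (computed only for representative angles); finally Lemma~\ref{L:2} propagates density from one orbit to all. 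You avoid both the rationality split and the geometric argument: your structure theorem $G_{1}=\langle T_{\lambda^{k}c}:k\in\mathbb{Z}\rangle$ yields the exact orbit formula $G(z)=\bigcup_{m}\left(\lambda^{m}z+G_{1}(0)\right)$; the dense direction then follows from Lemma~\ref{L:9--} alone, since every proper closed $\lambda$-invariant subgroup forces either $\lambda\in\mathbb{R}$ or an integer trace, i.e. $2\cos\theta\in\mathbb{Z}$ (the same crystallographic restriction as the paper's, but obtained invariantly via trace rather than by the matrix computation of Lemma~\ref{L:b101}); and the discrete direction follows from the relation $\lambda^{2}=2\cos\theta\,\lambda-1$, which replaces the paper's per-angle lattice computations and covers all of $H_{2}\cup H_{3}$ uniformly, including $\theta\in\pi\mathbb{Z}$, a case the paper's citation of Proposition~\ref{p:b10} (stated for $\theta\in\pi(\mathbb{Q}\setminus\mathbb{Z})$) technically leaves aside. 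What your approach buys is uniformity and explicit descriptions of $G_{1}(0)$ and of every orbit; what the paper's buys is that Propositions~\ref{p:a10} and~\ref{p:b10} are formulated for two generators with distinct centers and an arbitrary second ratio $\mu$, which is the form needed for the higher-dimensional induction (Lemma~\ref{L:12}, Proposition~\ref{p:1}), whereas your argument is tailored to the special pair $(h,f)$ with equal ratios.

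One local repair is needed in your proof that $N=G_{1}$: the inference that a surjection $G/N\to G/G_{1}$ between cyclic groups with corresponding generators must be an isomorphism is not valid as a general principle (consider $\mathbb{Z}\to\mathbb{Z}/2\mathbb{Z}$). The conclusion is nonetheless immediate here: every element of the kernel $G_{1}/N$ has the form $h^{m}N$ with $h^{m}\in G_{1}$, and $h^{m}=\lambda^{m}\mathrm{Id}$ is a translation only if $\lambda^{m}=1$, i.e. $h^{m}=\mathrm{Id}\in N$; so the kernel is trivial and $N=G_{1}$.
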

\bigskip

\begin{prop}\label{p:a10} If $\theta\notin \pi\mathbb{Q}$, $\mu\in \mathbb{C}$ and $a,b\in \mathbb{C}^{n}$, with $a\neq b$. If
 $G$  is the group generated by $f=(a,e^{i\theta})$  and  $g=(b,\mu)$ then $\overline{G(a)}=\mathbb{C}(b-a)+a$.
\end{prop}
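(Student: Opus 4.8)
The plan is to reduce the problem to one complex dimension and then split on the modulus of $\mu$. First I would set $\Delta=\mathbb{C}(b-a)+a$. By Lemma~\ref{L:222} this complex line is $G$-invariant and $G_{/\Delta}$ is a subgroup of $\mathcal{H}(1,\mathbb{C})$; since $a\in\Delta$ we get $G(a)\subseteq\Delta$, hence $\overline{G(a)}\subseteq\Delta$, and only the reverse inclusion remains. Identifying $\Delta$ with $\mathbb{C}$ through $t\mapsto t(b-a)+a$ (so that $a\mapsto0$ and $b\mapsto1$), the restricted generators act by $f:t\mapsto e^{i\theta}t$ and $g:t\mapsto\mu(t-1)+1$; thus $G_{/\Delta}$ is generated by the rotation $e^{i\theta}\,\mathrm{id}$ about $0$ and the homothety $(1,\mu)$ about $1$. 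Note that necessarily $\mu\neq1$ (otherwise $g=\mathrm{id}$ and the conclusion fails), and since also $a\neq b$ and $e^{i\theta}\neq1$, Lemma~\ref{L:6}(i) shows that $G$ is non abelian. It therefore suffices to prove $\overline{G_{/\Delta}(0)}=\mathbb{C}$.

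If $|\mu|\neq1$, then $g$ has ratio of modulus different from $1$, so Lemma~\ref{L:9} applies with the roles of $f$ and $g$ interchanged and yields $\overline{G(z)}=\mathbb{C}(a-b)+b$ for every $z$ on that line. Since $\mathbb{C}(a-b)+b=\mathbb{C}(b-a)+a=\Delta$ and $a\in\Delta$, this gives $\overline{G(a)}=\Delta$ and settles this case.

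If $|\mu|=1$, write $\mu=e^{i\phi}$ with $\mu\neq1$; now both generators are genuine rotations about the distinct centers $0$ and $1$, and a composition of rotations whose total angle is zero is a translation. Concretely I would compute the commutator $c=f\circ g\circ f^{-1}\circ g^{-1}$ and check that, in the coordinate $t$, it is the translation $t\mapsto t+v$ with $v=(1-e^{i\theta})(\mu-1)$; this is nonzero because $\mu\neq1$ and $e^{i\theta}\neq1$ (the latter since $\theta\notin\pi\mathbb{Q}$ forbids $\theta\in2\pi\mathbb{Z}$). As the ratio of $c$ equals $e^{i\theta}\mu e^{-i\theta}\mu^{-1}=1$, the map $c$ is indeed a translation, so $c\in G_{1}$, and conjugating by powers of $f$ gives $f^{k}\circ c\circ f^{-k}=T_{e^{ik\theta}v}\in G_{1}$ for every $k\in\mathbb{Z}$. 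Hence $\overline{G_{1}(0)}$ contains the closure $H$ of the additive subgroup of $\mathbb{C}$ generated by $\{e^{ik\theta}v:\ k\in\mathbb{Z}\}$.

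The hard part is to show $H=\mathbb{C}$. Since $\theta\notin\pi\mathbb{Q}$, the set $\{e^{ik\theta}:\ k\in\mathbb{Z}\}$ is dense in $S^{1}$, so the closed set $H$ contains the whole circle $\{z\in\mathbb{C}:\ |z|=|v|\}$. Now $H$ is a closed subgroup of $\mathbb{C}$, so by Lemma~\ref{L:9--} it is discrete, or of the form $\mathbb{R}\alpha$, or $\mathbb{R}\alpha+\mathbb{Z}\beta$, or all of $\mathbb{C}$. Each of the first three possibilities meets a given circle in at most countably many points (a discrete subgroup is countable, and a countable union of parallel lines meets a circle in countably many points, as each line meets it in at most two), whereas our circle is uncountable; hence $H=\mathbb{C}$. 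Consequently $\overline{G_{1}(0)}=\mathbb{C}$, so $\overline{G(0)}\supseteq\overline{G_{1}(0)}=\mathbb{C}$, and translating back $\overline{G(a)}=\Delta$, as required. The only delicate step is this final density argument; the commutator computation and the appeal to Lemma~\ref{L:9} are routine.
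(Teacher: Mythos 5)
Your proof is correct, but it takes a genuinely different route from the paper's. The paper never splits on $|\mu|$: it replaces $g$ by $g\circ f\circ g^{-1}$ so that both generators become rotations with the same irrational angle $\theta$, translates one center to the origin, and then argues geometrically in $\mathbb{R}^{2}$ — the commutator translation pushes a ball $D$ off the origin, density of the irrational rotation on circles centered at the origin gives $T(D)\subset\overline{G'(u)}$, and local density of every orbit together with connectedness of $\mathbb{R}^{2}$ yields density of every orbit. You instead handle $|\mu|\neq 1$ by citing Lemma~\ref{L:9} with the roles of $f$ and $g$ exchanged, and for $|\mu|=1$ you argue algebraically: the commutator is a nonzero translation $T_{v}$, conjugating by $f^{k}$ gives $T_{e^{ik\theta}v}\in G_{1}$, the closed additive subgroup generated by these contains the whole circle of radius $|v|$ (by density of $\{e^{ik\theta}:k\in\mathbb{Z}\}$ in $S^{1}$), and Lemma~\ref{L:9--} then forces this subgroup to be $\mathbb{C}$, since no proper closed subgroup of $\mathbb{C}$ contains a circle of positive radius. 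This is more self-contained than the paper's geometric argument, and it yields the stronger conclusion that the translation orbit $G_{1}(a)$ alone is already dense in $\Delta$.

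Two further observations. First, your commutator argument nowhere uses $|\mu|=1$: the vector $v=(1-e^{i\theta})(\mu-1)$ is nonzero for every $\mu\neq 1$, and the conjugation and closed-subgroup steps are insensitive to the modulus of $\mu$, so the case split — and with it the appeal to Lemma~\ref{L:9} — is superfluous. Second, it is fortunate that it is superfluous, because the swapped application of Lemma~\ref{L:9} is delicate: that lemma sits under the section's standing assumption that the ratio of modulus different from $1$ is also non-real (its proof uses $\lambda\notin\mathbb{R}$ to exclude the case $\overline{G'_{1}(0)}=\alpha\mathbb{R}(a-b)$, and the stated conclusion genuinely fails when both ratios are real). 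When $\mu$ is real with $|\mu|\neq 1$, your swap places a real number in the role of $\lambda$, outside the lemma as actually proved. Since your unit-modulus argument covers that case verbatim, the proof stands; the cleanest formulation is simply to run the commutator argument once, for all $\mu\neq 1$.
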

\medskip

\begin{proof} By Lemma~\ref{L:222}, $\Delta=\mathbb{C}(b-a)+a$ is $G$-invariant and $G_{/\Delta}$ is a subgroup of $\mathcal{H}(1, \mathbb{C})$.\
\\
First, we can assume that $\mu=e^{i\theta}$, otherwise we replace $g$ by $g\circ f \circ g^{-1}$,
second we suppose that $a=0$, otherwise we replace $G$ by $T_{a}\circ G \circ T_{-a}$.
 Then we will show that $\overline{G(0)}= \mathbb{C}$.\
\\
  Let $G'=\varphi\circ G \circ
\varphi^{-1}$, then $G'$ is the group generated by
$R_{1}=\varphi\circ h \circ \varphi^{-1}$ and $R_{2}=\varphi\circ
f \circ \varphi^{-1}$. By a simple calculus, we can check that
$R_{1}=\left[\begin{array}{cc}
 cos\theta & -sin\theta \\
 sin\theta &cos\theta
 \end{array}\right]$ and $R_{2}=T_{\varphi(b)}\circ R_{1} \circ T_{-\varphi(b)}$ is the rotation with center $\varphi(b)$
 and angle $\theta$. Notice by $\|.\|$ the Euclidean norm on $\mathbb{R}^{2}$ defined by $\|(x,y)\|=\sqrt{x^{2}+y^{2}}$.
  Let $u=(x_{0},y_{0})\in \mathbb{R}^{2}$ and $o=(0,0)$. There are three cases:
\
\\
\\
 (1) Suppose that $u\neq o$. Write the closed ball $D=\{v\in \mathbb{R}^{2},\ \ \|v\|< \|u\|\}$ and its border $C=\{v\in \mathbb{R}^{2},\ \ \|v\|=\|u\|\}$.
\\
(i) Firstly, we will prove that $o\in \mathbb{R}^{2}\backslash
T(D)$ for some $T\in G'_{1}$. For every $z\in\mathbb{C}^{n}$, on
has
  \begin{align*}
  h\circ f\circ h^{-1}\circ f^{-1}(z)& = e^{i\theta}(e^{-i\theta}[e^{i\theta}(e^{-i\theta}(z-a)+a)-a]+a)\\
  \ & = z+\left(1-e^{2i\theta}\right)a.
  \end{align*}
Write $c=\left(1-e^{2i\theta}\right)a$, hence $h\circ f\circ h^{-1}\circ f^{-1}=T_{c}\in G\backslash\{id_{\mathbb{C}}\}$ since $\theta\notin \pi\mathbb{Q}$.\
 Then  $T_{\varphi(c)}=\varphi\circ T_{c} \circ \varphi^{-1}\in G'$, so
 $T_{n\varphi(c)}(o)\in \mathbb{R}^{2}\backslash D$, for some $n\in\mathbb{N}$, we take $T=T_{n\varphi(c)}$.
 \\
 \\
(ii) Secondly, let's prove that $T(D)\subset \overline{G'(u)}$. Let
$b\in T(D)$  and set $C_{b}=\{v\in \mathbb{R}^{2},\ \ \|v\|=\|b\|\}$. By (i), $o\notin T(D)$ then $C_{b}\cap T(C)\neq \emptyset$.
 Let $b'\in C_{b}\cap T(C)$, therefore $b'\in \overline{G'(u)}$, because $T\in G'$ and the orbit of $u$ by $R_{1}$ is dense in $C$,  since
 $\theta\notin \pi\mathbb{Q}$, so $C\subset \overline{G'(u)}$. In the same way,  one has $C_{b}\subset \overline{G'(b')}\subset \overline{G'(u)}$, by $R_{1}$.
   It follows that $b\in C_{b}\subset \overline{G'(u)}$ and so $T(D)\subset \overline{G'(u)}$.\
   \\
   \\
(iii) Finally, we conclude that $G'(u)$ is locally dense for every
$u\neq o$. \
\\
\\
(2) Suppose that $u=o$, so $R_{2}(o)\neq o$, by applying $(1)$ on
$v=R_{2}(o)$,
 we obtain $G'(v)$ is locally dense, so $G'(o)$ is locally dense, since $G'(o)=G'(v)$.
 \\
 \\
 (3) We conclude that every orbit of $G'$ is dense in $\mathbb{R}^{2}$, since $\mathbb{R}^{2}$
 is connected and every orbit is locally dense. It follows that, every orbit of $G$ is dense in $\mathbb{C}$.
\end{proof}
\medskip

\begin{lem}\label{L:b10} Let $\theta\in \pi(\mathbb{Q}\backslash\mathbb{Z})$, $\mu\in \mathbb{C}$ and $a,b\in \mathbb{C}^{n}$, with $a\neq b$. If
 $G$  is the group generated by $f=(a,e^{i\theta})$  and  $g=(b,\mu)$ then  every orbit of $G$ is dense in
 $\mathbb{C}(b-a)+a$ or is closed and discrete.
\end{lem}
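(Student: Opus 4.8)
The plan is to collapse everything onto the line $\Delta=\mathbb{C}(b-a)+a$ and then split according to the ratio $\mu$ of the second generator. By Lemma~\ref{L:222}, $\Delta$ is $G$-invariant and $G_{/\Delta}$ is a subgroup of $\mathcal{H}(1,\mathbb{C})$, so it suffices to argue for $n=1$; conjugating $G$ by $T_{-a}$ I may further assume $a=0$, whence $f=(0,\lambda)$ with $\lambda=e^{i\theta}$, $g=(b,\mu)$ with $b\neq 0$, $\Delta=\mathbb{C}$, and $0\in\Gamma_G$. Since $\theta\in\pi(\mathbb{Q}\setminus\mathbb{Z})$, the number $\lambda$ is a root of unity with $\lambda\neq\pm1$, hence $\lambda\notin\mathbb{R}$ and $\mathrm{ord}(\lambda)\geq 3$. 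Two sub-cases are immediate: if $|\mu|\neq 1$, then applying Lemma~\ref{L:9} with the roles of $f$ and $g$ interchanged gives $\overline{G(z)}=\Delta$ for every $z$, so all orbits are dense; and if $\mu=e^{i\psi}$ with $\psi\notin\pi\mathbb{Q}$, Proposition~\ref{p:a10} (again with the roles interchanged) shows every orbit is dense. So the remaining, and genuinely new, case is $\mu=e^{i\psi}$ with $\psi\in\pi\mathbb{Q}$, in which $\Lambda_G=\langle\lambda,\mu\rangle$ is a finite cyclic group $F$ generated by some primitive $m$-th root of unity $\zeta$, and every element of $G$ is a Euclidean isometry.

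In this main case I would first analyse $L:=\overline{G_1(0)}$. A nontrivial translation exists, since the commutator $f\circ g\circ f^{-1}\circ g^{-1}$ equals the translation by $(\lambda-1)(1-\mu)b$ (a routine computation), which is nonzero when $\mu\neq1$, while if $\mu=1$ then $g$ itself is a nontrivial translation; hence $G_1(0)\neq\{0\}$ and $L\neq\{0\}$. For any $(c,\nu)\in G$ one checks directly that $(c,\nu)\circ T_v\circ (c,\nu)^{-1}=T_{\nu v}$, so $\nu\,G_1(0)\subseteq G_1(0)$ for all $\nu\in F$; in particular $\lambda L=L$. Now Lemma~\ref{L:9--} lists the possibilities for the closed subgroup $L$. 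Because $\lambda\notin\mathbb{R}$, none of the types $\mathbb{R}a$, $\mathbb{R}a+\mathbb{Z}b'$ (whose identity component $\mathbb{R}a$ would have to be $\lambda$-invariant), nor a nonzero $\mathbb{Z}a$ (a generator would map to $\pm$ a generator, forcing $\lambda=\pm1$), can be stable under multiplication by $\lambda$. Since $L\neq\{0\}$, this leaves exactly two possibilities: $L$ is a rank-two lattice, or $L=\mathbb{C}$.

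It remains to conclude in these two surviving cases. If $L=\mathbb{C}$, then by Lemma~\ref{L:bb10+}(i) one has $\overline{G(z)}\supseteq\overline{z+G_1(0)}=\mathbb{C}=\Delta$ for every $z$, so every orbit is dense. If $L$ is a lattice, I claim every orbit is closed and discrete. For each center $c\in\Gamma_G$ choose $(c,\nu)\in G$ with $\nu\neq 1$; the same commutator computation with $g=(c,\nu)$ gives $(\lambda-1)(1-\nu)c\in G_1(0)=L$, so $\Gamma_G\subseteq\bigcup_{\nu\in F\setminus\{1\}}\frac{1}{(\lambda-1)(1-\nu)}L$. Each set $\frac{1}{(\lambda-1)(1-\nu)}L$ is a scaled copy of the lattice $L$, hence discrete, and a finite union of such sets is locally finite; therefore $\Gamma_G$ is closed and discrete, and so is $L\cup\Gamma_G$. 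By Lemma~\ref{L:bb10+} we have $G(0)\subseteq G_1(0)\cup\Gamma_G=L\cup\Gamma_G$ and $G(z)\subseteq\Lambda_G z+G(0)=Fz+G(0)$, which is contained in a finite union of translates of the locally finite set $L\cup\Gamma_G$. A subset of a locally finite set is locally finite, so $G(z)$ is closed and discrete. Combining all cases yields the dichotomy.

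I expect the lattice case to be the main obstacle: passing from discreteness of the \emph{translational} orbit $G_1(0)$ to discreteness of the \emph{full} orbit $G(z)$ forces one to control the set of centers $\Gamma_G$, and the key point is precisely that when $\mathrm{ord}(\lambda)\in\{3,4,6\}$ the rotation symmetry keeps $\Gamma_G$ inside finitely many commensurable copies of the lattice $L$; the density case, by contrast, is essentially free once $\overline{G_1(0)}=\mathbb{C}$ is established.
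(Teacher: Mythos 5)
Your argument is correct up to two small repairs noted at the end, and it takes a genuinely different route from the paper's at the decisive organizational step. The paper reduces at once to the equal-ratio situation: it replaces $g$ by $g\circ f\circ g^{-1}$ so that both generators have ratio $e^{i\theta}$, and then runs everything (the dichotomy for $\overline{G_{1}(0)}$ via Lemma~\ref{L:9--}, the bootstrap to full orbits via Lemmas~\ref{L:bb10} and~\ref{L:bb10+}) inside the group generated by $e^{i\theta}Id$ and one conjugate, where the ratio group is automatically the finite cyclic group $\langle e^{i\theta}\rangle$. The cost, which the paper never addresses, is that $\langle f,\,g\circ f\circ g^{-1}\rangle$ is only a \emph{subgroup} of $G$: density of its orbits passes up to $G$, but discreteness does not. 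For instance, with $\theta=\pi/2$ and $\mu=e^{i}$, the subgroup is generated by two rotations of angle $\pi/2$ and has closed discrete orbits by Theorem~\ref{T:001}, while $G$ itself has dense orbits by Proposition~\ref{p:a10}; so the paper's proof, as written, only establishes the discrete alternative for a subgroup, not for $G$. Your three-way split on $\mu$ --- $|\mu|\neq 1$ settled by Lemma~\ref{L:9}, $\mu$ an irrational rotation settled by Proposition~\ref{p:a10}, and only then the genuinely new case where both ratios are roots of unity --- keeps the full group $G$ in play throughout, which is exactly what the discrete half of the dichotomy requires; in this respect your proof is sounder than the paper's. In the root-of-unity case your analysis of $L=\overline{G_{1}(0)}$ (Lemma~\ref{L:9--} plus invariance under multiplication by the non-real $\lambda$) coincides with the paper's, but your bootstrap differs: where the paper invokes the cyclic identities of Lemma~\ref{L:bb10}, you trap $\Gamma_{G}$ inside the finite union $\bigcup_{\nu\in F\backslash\{1\}}\frac{1}{(\lambda-1)(1-\nu)}L$ via the commutator computation, and then use only the inclusions $G(0)\subset G_{1}(0)\cup\Gamma_{G}$ and $G(z)\subset\Lambda_{G}z+G(0)$ together with finiteness of $\Lambda_{G}=\langle\lambda,\mu\rangle$. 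This is what makes the argument work when $\Lambda_{G}$ is not generated by the ratio of a single element of $G$ fixing $0$ --- precisely the generality the paper's reduction tries, illegitimately for discreteness, to avoid.

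Two repairs. First, in the paper's notation $(b,1)$ is the identity map, not a nontrivial translation, so when $\mu=1$ your source of a nonzero element of $G_{1}(0)$ disappears; but then $G=\langle f\rangle$ is finite cyclic, every orbit is finite, hence closed and discrete, so this degenerate case should simply be split off at the start. Second, the statement of Proposition~\ref{p:a10} literally gives only $\overline{G(b)}=\mathbb{C}(a-b)+b$ (density of the orbit of one center); to conclude that \emph{every} orbit is dense, cite Lemma~\ref{L:2} (or Corollary~\ref{CC:01}) on top of it. Neither repair affects the structure of your proof.
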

\medskip

\begin{proof} By Lemma~\ref{L:222}, $\Delta=\mathbb{C}(b-a)+a$ is $G$-invariant and $G_{/\Delta}$ is a subgroup of $\mathcal{H}(1, \mathbb{C})$.
  Firstly, we can assume that $\mu=e^{i\theta}$, otherwise we replace $g$ by $g\circ f \circ g^{-1}$, secondly we suppose that $a=0$, otherwise we replace $G$ by $T_{a}\circ G \circ T_{-a}$.
 Then we will show every orbit of $G$ is dense in
 $\mathbb{C}$ or closed and discrete.\
 \\
Thirdly, we will show that $G_{1}(0)$ is dense in $\mathbb{C}$ or it is closed discrete. Suppose that
$\overline{G_{1}(0)}\neq\mathbb{C}$ and $G_{1}(0)$ is not discrete. Then by Lemma~\ref{L:9--},
$\overline{G_{1}(0)}= \mathbb{Z}a_{1}+\mathbb{R}a_{2}$ for some $a_{1},a_{2}\in \mathbb{R}$ with $a_{2}\neq0$.
 So $T_{a_{1}}\in\overline{G_{1}}$
 where $\overline{G}$ be the closure of $G$ in
$\mathcal{T}_{n}$. Let
$g=T_{a_{1}}\circ h\circ T_{-a_{1}}$, then $g=(a_{1},e^{i\theta})$.
Since $\theta\in \pi\left(\mathbb{Q}\backslash\mathbb{Z}\right)$, so
$e^{i\theta}\mathbb{R}a_{2}\neq \mathbb{R}a_{2}$. By Lemma
~\ref{L:bb10}.(iii), $e^{i\theta}\mathbb{R}a_{2}\subset
\overline{G_{1}(0)}=\mathbb{Z}a_{1}+\mathbb{R}a_{2}$, a
contradiction. \
 \\
 \\
 We  conclude that $G_{1}(0)$ is dense in $\mathbb{C}$ or
 closed and discrete.\
 \\
Finally, by Lemma~\ref{L:bb10}, (iii), (iv) and Lemma ~\ref{L:bb10+},(i) and (ii) we have $G_{1}(0)$ is closed discrete or dense
if and only if are $\Gamma_{G}$ and  $G_{1}(0)$ and this is equivalent to is $G(0)$.
 On the other hand, $\theta\in \pi(\mathbb{Q}\backslash\mathbb{Z})$, so by  Lemma~\ref{L:bb10}.(i),
 $\Lambda_{G}$ is finite and the proof results from Lemma~\ref{L:bb10+}, (i) and (ii).
\end{proof}
\bigskip

\begin{prop}\label{p:b10} Let $\theta\in \pi\left(\mathbb{Q}\backslash\mathbb{Z}\right)$, $\mu\in \mathbb{C}$ and $a,b\in \mathbb{C}^{n}$, with $a\neq b$. If
 $G$  is the group generated by $f=(a,e^{i\theta})$  and  $g=(b,\mu)$ then $G(a)$ is closed and discrete if and only
 if $G\subset \mathcal{S}_{2}\mathcal{R}_{n}$ or $G\subset\mathcal{S}_{3}\mathcal{R}_{n}$.
 \end{prop}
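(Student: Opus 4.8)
The plan is to pass to the $G$-invariant line $\Delta=\mathbb{C}(b-a)+a$ (Lemma~\ref{L:222}), normalize the center of $f$ to the origin, and then split according to whether $\mu$ is a root of unity; the arithmetic core is a crystallographic-restriction argument. Throughout I would write an affine map $z\mapsto\nu z+t$ as $[\nu,t]$, so that $[\nu_1,t_1]\circ[\nu_2,t_2]=[\nu_1\nu_2,\ \nu_1t_2+t_1]$ and $(c,\lambda)=[\lambda,(1-\lambda)c]$. First I would normalize: conjugating by $T_{-a}$ replaces $G$ by $G'=T_{-a}\circ G\circ T_{a}$, which preserves $\Lambda_G$, sends $G(a)$ to $T_{-a}(G(a))$ (so closedness and discreteness are unchanged), and, since conjugation by a translation fixes every ratio and maps each $\mathcal{S}_i\mathcal{R}_n$ to itself, preserves the right-hand side as well. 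Hence I may assume $a=0$, so $f=[e^{i\theta},0]$ and $g=[\mu,\gamma]$ with $\gamma=(1-\mu)(b-a)$; here $\gamma\neq0$ because $b\neq a$ and we may take $\mu\neq1$ (for $\mu=1$ the map $g$ is the identity and $G$ is abelian, outside the setting). Note $G_1(0)\subset\mathbb{C}\gamma=\Delta$.

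Next I would dispose of the case where $\mu$ is \emph{not} a root of unity, that is $|\mu|\neq1$, or $\mu=e^{i\psi}$ with $\psi\notin\pi\mathbb{Q}$. If $|\mu|\neq1$, Lemma~\ref{L:9} (with the roles of $f$ and $g$ exchanged) gives $\overline{G(z)}=\Delta$ for every $z\in\Delta$, in particular $\overline{G(a)}=\Delta$; if $\psi\notin\pi\mathbb{Q}$, the argument of Proposition~\ref{p:a10} (again with roles exchanged) shows every orbit is dense in $\Delta$. In either subcase $G(a)$ is dense in the line $\Delta$, hence not closed and discrete. Moreover $g\notin\mathcal{R}_n$ (resp. $e^{i\psi}\notin F_2\cup F_3$), so $G\not\subset\mathcal{S}_2\mathcal{R}_n$ and $G\not\subset\mathcal{S}_3\mathcal{R}_n$. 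Both sides of the equivalence are false, so it holds.

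There remains the case $\mu=e^{i\psi}$ with $\psi\in\pi\mathbb{Q}$. Then $\Lambda_G=\langle e^{i\theta},\mu\rangle$ is a finite subgroup of $S^1$, hence cyclic, say $\Lambda_G=\langle\lambda_0\rangle$ of order $N$; and $\theta\in\pi(\mathbb{Q}\setminus\mathbb{Z})$ forces $e^{i\theta}\neq\pm1$, so $N\geq3$. By Lemma~\ref{L:b10} every orbit is dense in $\Delta$ or closed and discrete, and by Lemmas~\ref{L:bb10},~\ref{L:bb10+},~\ref{L:b10} this is decided by whether $G_1(0)$ is discrete. The key step is to sandwich $G_1(0)$ between two cyclic $\mathbb{Z}[\lambda_0]$-modules. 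For the upper bound: the translation parts of $f^{\pm1}=[e^{\pm i\theta},0]$ and $g^{\pm1}=[\mu^{\pm1},\,\cdot\,]$ all lie in $\mathbb{Z}[\lambda_0]\gamma$ (using $\mu^{-1}\in\Lambda_G\subset\mathbb{Z}[\lambda_0]$), and a one-line induction on word length, via $[\nu_1,t_1][\nu_2,t_2]=[\nu_1\nu_2,\nu_1t_2+t_1]$ together with $\Lambda_G\subset\mathbb{Z}[\lambda_0]$, shows every element of $G$ has translation part in $\mathbb{Z}[\lambda_0]\gamma$; hence $G_1(0)\subset\mathbb{Z}[\lambda_0]\gamma$. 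For the lower bound: $G$ is non abelian (Lemma~\ref{L:6}, since $a\neq b$ and $e^{i\theta},\mu\neq1$), so $G_1(0)\neq\{0\}$; choosing $0\neq w_0\in G_1(0)$ and conjugating $T_{w_0}$ by elements of ratio $\nu$ gives $\nu w_0\in G_1(0)$, whence $\mathbb{Z}[\lambda_0]w_0\subset G_1(0)$.

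Finally I would invoke the crystallographic restriction. As an abelian group $\mathbb{Z}[\lambda_0]$ is free of rank $d=\deg\Phi_N$ (the $N$th cyclotomic polynomial), with $d=2$ exactly for $N\in\{3,4,6\}$ and $d\geq4$ for every other $N\geq3$. Moreover $G\subset\mathcal{S}_2\mathcal{R}_n$ (resp. $\mathcal{S}_3\mathcal{R}_n$) iff $\Lambda_G\subset F_2$ (resp. $F_3$), i.e. iff $N\mid4$ (resp. $N\mid6$); so ``$G\subset\mathcal{S}_2\mathcal{R}_n$ or $G\subset\mathcal{S}_3\mathcal{R}_n$'' is equivalent to $N\in\{3,4,6\}$. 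If $N\in\{3,4,6\}$, then $\mathbb{Z}[\lambda_0]\gamma$ is a rank-two lattice in $\Delta$, so the upper bound forces $G_1(0)$ discrete and $G(a)$ closed and discrete. If $N\notin\{3,4,6\}$, then $\mathbb{Z}[\lambda_0]w_0$ is a free abelian subgroup of $\Delta\cong\mathbb{C}\cong\mathbb{R}^2$ of rank $d\geq4>2$, which cannot be discrete (Lemma~\ref{L:9--}), so $G_1(0)$ is not discrete and $G(a)$ is dense. This yields the equivalence. The main obstacle is the upper bound $G_1(0)\subset\mathbb{Z}[\lambda_0]\gamma$, which confines every translation of $G$ to a single cyclic $\mathbb{Z}[\lambda_0]$-module, together with the bookkeeping identifying $\Lambda_G\subset F_2,F_3$ with $N\in\{3,4,6\}$ and with $\deg\Phi_N=2$.
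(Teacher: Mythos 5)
Your Case~A has a genuine gap in the subcase $\mu\in\mathbb{R}$ with $|\mu|\neq 1$. You invoke Lemma~\ref{L:9} ``with the roles of $f$ and $g$ exchanged,'' i.e.\ with the first generator of ratio $\mu$. But Lemma~\ref{L:9} lives under the standing hypothesis of Section~3 that the first ratio is \emph{non-real}, and its proof genuinely uses this: in step (ii) the possibility $\overline{G'_{1}(0)}=\alpha\mathbb{R}(a-b)$ is excluded precisely because a real line cannot be invariant under multiplication by a non-real $\lambda$. Without that hypothesis the lemma is false: for $f=(0,2)$, $g=(1,2)$ every element of the group is $z\mapsto 2^{k}z+c$ with $c\in\mathbb{Z}[1/2]$, so $\overline{G(0)}\subset\mathbb{R}\neq\mathbb{C}(b-a)+a$. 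This is exactly the configuration your exchanged-role application reduces to when $\mu$ is real (the proof of Lemma~\ref{L:9} replaces the second generator by a conjugate of the first, so both ratios become $\mu$), so for real $\mu$ your density claim is unjustified and the left side of the equivalence is not yet refuted. The repair is cheap and stays inside your framework: since $g\notin\mathcal{SR}_{n}$, Lemma~\ref{L:8} gives some $0\neq w_{0}\in G_{1}(0)$, and conjugating $T_{w_{0}}$ by $g^{k}$ gives $\mu^{k}w_{0}\in G_{1}(0)$ for all $k\in\mathbb{Z}$; as $|\mu|\neq1$ these accumulate at $0$, so $G_{1}(0)\subset G(0)$ is not discrete and $G(a)$ is not closed and discrete — which is all Case~A needs, and it covers real and non-real $\mu$ alike, so you can drop the appeal to Lemma~\ref{L:9} there entirely. (Alternatively, apply Lemma~\ref{L:9} to the subgroup generated by $f\circ g$, whose ratio $e^{i\theta}\mu$ is non-real of modulus $\neq1$, and its conjugate $f\circ(f\circ g)\circ f^{-1}$.)

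With that repaired, the rest is correct, and your treatment of the root-of-unity case is a genuinely different route from the paper's. The paper first replaces $g$ by $g\circ f\circ g^{-1}$ so that both generators have ratio $e^{i\theta}$, then argues: $G(0)$ discrete forces $G_{1}(0)$ to be a rank-two lattice (Lemma~\ref{L:LL1}), so $R_{\theta}$ is conjugate into $SL(2,\mathbb{Z})$ (Lemma~\ref{L:b101}), so $2\cos\theta\in\mathbb{Z}$ and $\theta\in H_{2}\cup H_{3}$; the converse is referred to Lemma~\ref{L:b10} (really Lemma~\ref{L:1b}). You instead keep $\mu$, work with the full ratio group $\Lambda_{G}=\langle e^{i\theta},\mu\rangle$ of order $N$, sandwich the translation parts between $\mathbb{Z}[\lambda_{0}]w_{0}$ and $\mathbb{Z}[\lambda_{0}]\gamma$, and quote $\deg\Phi_{N}=2$ iff $N\in\{3,4,6\}$ — the crystallographic restriction in algebraic rather than matrix-trace form. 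This buys two things. First, your converse is immediate: when $N\in\{3,4,6\}$ the whole orbit $G(0)$ sits inside the lattice $\mathbb{Z}[\lambda_{0}]\gamma$, with no detour through the dense-or-discrete dichotomy. Second, and more importantly, your bookkeeping constrains $\mu$ as well as $\theta$: the paper's substitution $g\mapsto g\circ f\circ g^{-1}$ discards $\mu$, so its forward direction only yields $\theta\in H_{2}\cup H_{3}$, a statement about the reduced subgroup, whereas $G\subset\mathcal{S}_{i}\mathcal{R}_{n}$ for the original group also requires $\mu\in F_{i}$ — exactly what your condition $\Lambda_{G}\subset F_{i}$ (i.e.\ $N\mid 4$ or $N\mid 6$) delivers. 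The only price is the standard external fact that $\mathbb{Z}[\lambda_{0}]$ is free abelian of rank $\varphi(N)$.
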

\medskip

To prove Proposition ~\ref{p:b10}, we need to introduce the following Lemmas:\

\begin{lem}\label{L:b101} Let $G$  be the group generated by $h=e^{i\theta}Id_{\mathbb{C}}$  and  $f=(a_{0},e^{i\theta})$
 with  $a_{0}\in \mathbb{C}^{*}$ and $\theta\in \mathbb{R}$. If $G_{1}(0)=\mathbb{Z}a_{1}+\mathbb{Z}a_{2}$ where $(a_{1},a_{2})$ is a basis
 of $\mathbb{C}$ over $\mathbb{R}$  then there exists $P\in GL(2, \mathbb{C})$ such that $Pe_{1}=a_{1}$, $Pe_{2}=a_{2}$ and  $P^{-1}R_{\theta}P\in SL(2,
 \mathbb{Z})$, where $R_{\theta}=\left[\begin{array}{cc}
                                         cos\theta & -sin\theta \\
                                         sin\theta & cos\theta
                                       \end{array} \right]$ where $(e_{1},e_{2})$ is the canonical basis of $\mathbb{R}^{2}$.
\end{lem}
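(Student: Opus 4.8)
The plan is to package everything through the change-of-basis matrix $P$ that turns the standard lattice $\mathbb{Z}e_{1}+\mathbb{Z}e_{2}$ into $G_{1}(0)$, and then to transport the rotation $R_{\theta}$ into those coordinates. First I would use the identification $\varphi:z=x+iy\longmapsto(x,y)$ of $\mathbb{C}$ with $\mathbb{R}^{2}$ and let $P$ be the (real) $2\times 2$ matrix whose columns are $a_{1}$ and $a_{2}$. Since $(a_{1},a_{2})$ is a basis of $\mathbb{C}$ over $\mathbb{R}$, the matrix $P$ is invertible, so $P\in GL(2,\mathbb{R})\subset GL(2,\mathbb{C})$, and by construction $Pe_{1}=a_{1}$, $Pe_{2}=a_{2}$, and $P(\mathbb{Z}^{2})=\mathbb{Z}a_{1}+\mathbb{Z}a_{2}=G_{1}(0)$. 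Under $\varphi$, multiplication by $e^{i\theta}$ on $\mathbb{C}$ is exactly the rotation $R_{\theta}$ on $\mathbb{R}^{2}$, so the map $h=e^{i\theta}\mathrm{Id}_{\mathbb{C}}$ becomes $R_{\theta}$.

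The key step is to show that $G_{1}(0)$ is invariant under $R_{\theta}$, that is, $R_{\theta}(G_{1}(0))=G_{1}(0)$. For any $T_{v}\in G_{1}$ (translation by $v\in G_{1}(0)$), the conjugate $h\circ T_{v}\circ h^{-1}$ is the translation by $e^{i\theta}v$; it lies in $G$ and is a translation, hence it lies in $G_{1}=G\cap\mathcal{T}_{n}$, so $e^{i\theta}v\in G_{1}(0)$. This gives $e^{i\theta}G_{1}(0)\subset G_{1}(0)$, and the same argument with $h^{-1}$ gives $e^{-i\theta}G_{1}(0)\subset G_{1}(0)$, i.e. $G_{1}(0)\subset e^{i\theta}G_{1}(0)$; combining the two inclusions yields equality. (When $e^{i\theta}\neq 1$ this is also exactly Lemma~\ref{L:bb10}(iii) applied with $k=1$ and $k=-1$.) Translated into matrices, this says $R_{\theta}\bigl(P(\mathbb{Z}^{2})\bigr)=P(\mathbb{Z}^{2})$.

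To conclude, set $M=P^{-1}R_{\theta}P$. From $R_{\theta}(P(\mathbb{Z}^{2}))=P(\mathbb{Z}^{2})$ we get $M(\mathbb{Z}^{2})=\mathbb{Z}^{2}$. An invertible real matrix carrying $\mathbb{Z}^{2}$ onto itself has integer entries (its columns $Me_{1},Me_{2}$ lie in $\mathbb{Z}^{2}$), and its inverse, which also preserves $\mathbb{Z}^{2}$, likewise has integer entries; hence $M\in GL(2,\mathbb{Z})$. Finally $\det M=\det R_{\theta}=\cos^{2}\theta+\sin^{2}\theta=1$, so $M\in SL(2,\mathbb{Z})$, which is the assertion.

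I expect the only genuine content to be the invariance equality $R_{\theta}(G_{1}(0))=G_{1}(0)$, and in particular the upgrade from a one-sided inclusion to an equality; this is what forces the conjugated rotation to stabilize $\mathbb{Z}^{2}$ rather than merely map it into itself. Once that is in hand, the passage to $SL(2,\mathbb{Z})$ is the standard fact that the stabilizer of $\mathbb{Z}^{2}$ in $GL(2,\mathbb{R})$ is $GL(2,\mathbb{Z})$, together with the trivial determinant computation.
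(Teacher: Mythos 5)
Your proposal is correct and takes essentially the same route as the paper: both arguments rest on the invariance $e^{i\theta}G_{1}(0)\subset G_{1}(0)$ obtained by conjugating translations of $G_{1}$ by $h$ (Lemma~\ref{L:bb10}.(iii)), followed by conjugating $R_{\theta}$ with the change-of-basis matrix $P$ whose columns are $a_{1},a_{2}$; the paper merely carries this out in explicit real coordinates (writing $a_{1}=a+ic$, $a_{2}=b+id$ and expanding), where you use lattice-stabilizer language. One minor remark: the two-sided equality $R_{\theta}(G_{1}(0))=G_{1}(0)$ that you single out as the key step is not actually needed, since the one-sided inclusion already makes the entries of $P^{-1}R_{\theta}P$ integers, and $\det(P^{-1}R_{\theta}P)=\det R_{\theta}=1$ then forces membership in $SL(2,\mathbb{Z})$ directly, which is how the paper concludes.
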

\medskip

\begin{proof} If  $G_{1}(0)=\mathbb{Z}a_{1}+\mathbb{Z}a_{2}$ with
$(a_{1},a_{2})$ is a basis of $\mathbb{C}$ over $\mathbb{R}$. By
Lemma~\ref{L:bb10}.(iii), $e^{i\theta}a_{1},
e^{i\theta}a_{2}\in G_{1}(0)$, so
$$\left\{\begin{array}{c}
 e^{i\theta}a_{1}=na_{1}+ma_{2}\ \ \ \ \  \mathrm{and }\\
e^{i\theta}a_{2}=n'a_{1}+m'a_{2} \ \ \ \ \ \ \ \
                                      \end{array}
\right.$$
 for some $n,m,n',m'\in\mathbb{Z}$. Write $a_{1}=a+ic$ and
$a_{2}=b+id$, $a,c,b,d\in\mathbb{R}$ then:
$$\left\{\begin{array}{c}
(cos\theta+i.sin\theta)(a+ic)=n(a+ic)+m(b+id)\ \ \\
(cos\theta+i.sin\theta)(b+id)=n'(a+ic)+m'(b+id)
\end{array}
\right.$$
So
$$\left\{\begin{array}{c}
a.cos\theta-c.sin\theta=na+mb \\
a.sin\theta+c.cos\theta=nc+md
\end{array}
\right.\ \ \mathrm{and}\ \ \left\{\begin{array}{c}
b.cos\theta-d.sin\theta=n'a+m'b \\
b.sin\theta+d.cos\theta=n'c+m'd
\end{array}
\right.\ \ \ \ (1)$$
\
\\
Write $P=\left[\begin{array}{cc}
                 a & b \\
                 c & d
               \end{array}
\right]$, then (1) is equivalent to

$$R_{\theta}[a,c]^{T}=P[n,m]^{T}\ \ and \ \
R_{\theta}[b,d]^{T}=P[n',m']^{T}.$$
As $Pe_{1}=[a,c]^{T}$ and $Pe_{2}=[b,d]^{T}$,  so
$$R_{\theta}Pe_{1}=P[n,m]^{T}\ \ and \ \
R_{\theta}Pe_{2}=P[n',m']^{T}.$$ As $(a_{1}, a_{2})$ is a basis of
$\mathbb{C}$ over $\mathbb{R}$ one has $P\in GL(2, \mathbb{R})$ and
so
$$P^{-1}R_{\theta}Pe_{1}\in \mathbb{Z}^{2}\ \ and \ \
P^{-1}R_{\theta}Pe_{2}\in \mathbb{Z}^{2}.$$ It follows that
$P^{-1}R_{\theta}P\in SL(2, \mathbb{Z})$.
\end{proof}
\bigskip

\begin{lem}\label{L:1b} Let $G$  be the subgroup of $\mathcal{H}(1,\mathbb{C})$ generated by $h=e^{i\theta}Id_{\mathbb{C}}$  and  $f=(a,e^{i\theta})$
 with  $a\in \mathbb{C}^{*}$ and $\theta\in H_{2}\cup H_{3}$. Then
 $$\left(\mathbb{Z}(1-e^{-i\theta})^{2}+\mathbb{Z}(1-e^{i\theta})^{2}\right)a\subset G(0)\subset\left(\mathbb{Z}(1-e^{-i\theta})+\mathbb{Z}(1-e^{i\theta})\right)a.$$
 \end{lem}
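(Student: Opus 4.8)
The plan is to set $\lambda = e^{i\theta}$ and to exploit the arithmetic fact that, precisely for $\theta\in H_{2}\cup H_{3}$, the quantity $c:=\lambda+\lambda^{-1}=2\cos\theta$ is an \emph{integer}: indeed $2\cos\theta\in\{-2,-1,0,1,2\}$ happens exactly when $\theta\in H_{2}\cup H_{3}$ (the five integer values correspond, respectively, to $\theta\in\pi\mathbb{Z}$, $\theta\in\pm\tfrac{2\pi}{3}+2\pi\mathbb{Z}$, $\theta\in\tfrac{\pi}{2}+\pi\mathbb{Z}$, $\theta\in\pm\tfrac{\pi}{3}+2\pi\mathbb{Z}$). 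This integrality is the crystallographic-type restriction that makes the two bounding lattices behave well. If $\lambda=1$ (i.e. $\theta\in2\pi\mathbb{Z}$) then $f=h=\mathrm{id}$ and both bounds collapse to $\{0\}$, so I may assume $\lambda\neq1$ and freely invoke Lemma~\ref{L:bb10}. Throughout I use $\lambda^{-1}=e^{-i\theta}$ since $|\lambda|=1$, and the identity $\lambda^{2}=c\lambda-1$.

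For the upper bound I would set $u=(1-\lambda)a$, $w=(1-\lambda^{-1})a$ and $L=\mathbb{Z}u+\mathbb{Z}w$, so that the claimed right-hand side is exactly $L$. First I record the two relations $\lambda w=(\lambda-1)a=-u$ and, using $\lambda^{2}=c\lambda-1$, $\lambda u=(\lambda-\lambda^{2})a=c\,u+w$; since $c\in\mathbb{Z}$ these give $\lambda L\subseteq L$, and because $\lambda$ is a root of unity of some order $m$ one has $\lambda^{-1}=\lambda^{m-1}$, hence $\lambda^{-1}L\subseteq L$ too. Consequently every generator and its inverse maps $L$ into $L$: $h(v)=\lambda v$, $f(v)=\lambda v+u$, $h^{-1}(v)=\lambda^{-1}v$, $f^{-1}(v)=\lambda^{-1}v+w$. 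Since $0\in L$, every word in $h^{\pm1},f^{\pm1}$ sends $0$ into $L$, so $G(0)\subseteq L$, which is the desired inclusion.

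For the lower bound I would produce explicit translations in $G_{1}=G\cap\mathcal{T}_{1}$. A direct computation of the commutator yields $h\circ f\circ h^{-1}\circ f^{-1}(z)=z-(1-\lambda)^{2}a$, whence $(1-\lambda)^{2}a\in G_{1}(0)\subseteq G(0)$. Since $(1-\lambda^{-1})^{2}=\lambda^{-2}(1-\lambda)^{2}$ and $G_{1}(0)$ is stable under multiplication by $\lambda^{-2}$ by Lemma~\ref{L:bb10}(iii), also $(1-\lambda^{-1})^{2}a\in G_{1}(0)$. As $G_{1}(0)$ is an additive subgroup of $\mathbb{C}$ (the set of translation vectors of $G_{1}$), it contains $\mathbb{Z}(1-\lambda)^{2}a+\mathbb{Z}(1-\lambda^{-1})^{2}a$, giving the left-hand inclusion.

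The main obstacle is the invariance step $\lambda L\subseteq L$: it fails for general $\theta$ and holds only because $c=2\cos\theta$ is an integer, so the heart of the argument is the identity $\lambda(1-\lambda)=c(1-\lambda)+(1-\lambda^{-1})$ together with the verification that $\theta\in H_{2}\cup H_{3}$ is exactly the set of angles making $c$ integral. Everything else—the commutator computation and the appeal to Lemma~\ref{L:bb10}—is routine bookkeeping.
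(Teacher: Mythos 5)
Your proof is correct, and it shares the paper's overall skeleton: the right-hand inclusion is obtained by showing that the lattice $L=\mathbb{Z}(1-e^{i\theta})a+\mathbb{Z}(1-e^{-i\theta})a$ contains $0$ and absorbs the generators, and the left-hand inclusion comes from the commutator relation together with the stability facts of Lemma~\ref{L:bb10} (the paper invokes that lemma with $k=\pm 1$ where you compute the commutator $h\circ f\circ h^{-1}\circ f^{-1}$ explicitly and then apply part (iii); this is the same content). Where you genuinely diverge is in the verification that $L$ is invariant, which is the heart of the lemma. The paper checks invariance by explicit computation only for the representative angles $\theta=\frac{\pi}{2}$ (for $H_{2}$) and $\theta=\frac{\pi}{3}$ (for $H_{3}$, misprinted there as $\frac{3\pi}{2}$), and only for $h$ and $f$, not for their inverses; strictly speaking, covering the remaining angles of $H_{2}\cup H_{3}$ (e.g. $\theta\in\pi\mathbb{Z}$ or $\theta=\frac{2\pi}{3}$) and the maps $h^{-1},f^{-1}$ requires further remarks that the paper omits. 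Your argument is uniform and closes these gaps: the relations $\lambda u=cu+w$, $\lambda w=-u$ (and symmetrically $\lambda^{-1}u=-w$, $\lambda^{-1}w=cw+u$, which makes even the root-of-unity appeal unnecessary) reduce everything to the integrality of $c=\lambda+\lambda^{-1}=2\cos\theta$, and the observation that $c\in\mathbb{Z}$ holds exactly for $\theta\in H_{2}\cup H_{3}$ is precisely the crystallographic restriction underlying the statement. What your route buys is completeness (all angles and all four generator maps handled in one stroke) and a conceptual explanation of the hypothesis; what the paper's route buys is only concreteness at two sampled angles, at the cost of leaving the general case implicit.
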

\medskip

\begin{proof} Denote  by $a_{1}=(1-e^{-i\theta})a$ and  $a_{2}=(1-e^{i\theta})a$.\
\\
$\bullet$ Firstly, we will prove that $\mathbb{Z}a_{1}+\mathbb{Z}a_{2}$ is $G$-invariant:\
\\
-  If $\theta\in H_{2}$, suppose that $\theta=\frac{\pi}{2}$. Then $e^{-i\theta}=-i$, $e^{i\theta}=i$, so
 $a_{1}=(1-i)a$ and $a_{2}=(1+i)a$. Let $u=na_{1}+ma_{2}$, for some $n, m\in \mathbb{Z}$, so \begin{align*}
 h(u)& =i(n(1-i)a+m(1+i))a\\
 \ & =na_{2}-ma_{1}
 \end{align*}
 and
  \begin{align*}
 f(u)& =i\left(n(1-i)a+m(1+i)a-a\right)+a\\
 \ & = n(1+i)a-m(i-1)a+(1-i)a\\
 \ & =na_{2}-(m-1)a_{1}
 \end{align*}
 Then $h(u), f(u)\in \mathbb{Z}a_{1}+\mathbb{Z}a_{2}$. It follows that $\mathbb{Z}a_{1}+\mathbb{Z}a_{2}$ is $G$-invariant.\
 \\
 \\
- If $\theta\in H_{3}$, suppose that $\theta=\frac{3\pi}{2}$. Then $e^{-i\theta}=e^{-i\frac{\pi}{3}}$ and  $e^{i\theta}=e^{i\frac{\pi}{3}}$. As
 $1-e^{i\frac{\pi}{3}}=e^{-i\frac{\pi}{3}}$ and  $1-e^{-i\frac{\pi}{3}}=e^{i\frac{\pi}{3}}$ so
 $a_{1}=e^{i\frac{\pi}{3}}a$ and $a_{2}=e^{-i\frac{\pi}{3}}a$. Let $u=na_{1}+ma_{2}$, for some $n, m\in \mathbb{Z}$, so \begin{align*}
 h(u)& =e^{\frac{i\pi}{3}}(ne^{i\frac{\pi}{3}}a+me^{-\frac{i\pi}{3}}a)\\
 \ & =ne^{\frac{2i\pi}{3}}a-ma\\
 \ & =- ne^{\frac{-i\pi}{3}}a-m(e^{i\frac{\pi}{3}}+e^{-\frac{i\pi}{3}})a\\
\ & =(-n-m)e^{\frac{-i\pi}{3}}a-me^{i\frac{\pi}{3}}a
 \end{align*}
 and
  \begin{align*}
 f(u)& =e^{\frac{i\pi}{3}}\left((ne^{i\frac{\pi}{3}}a+me^{-\frac{i\pi}{3}}a-a\right)+a\\
 \ & =ne^{\frac{2i\pi}{3}}a-(m-1)a-e^{\frac{i\pi}{3}}a\\
 \ & =- ne^{\frac{-i\pi}{3}}a-(m-1)(e^{\frac{i\pi}{3}}+e^{-\frac{i\pi}{3}})a-e^{\frac{i\pi}{3}}a\\
\ & =(-n-m+1)e^{\frac{-i\pi}{3}}a-me^{\frac{i\pi}{3}}a
 \end{align*}
 Then $h(u), f(u)\in \mathbb{Z}a_{1}+\mathbb{Z}a_{2}$. It follows that $\mathbb{Z}a_{1}+\mathbb{Z}a_{2}$ is $G$-invariant.\
\
\\
$\bullet$ Secondly, $G(0)\subset \mathbb{Z}a_{1}+\mathbb{Z}a_{2}$, since $0\in\mathbb{Z}a_{1}+\mathbb{Z}a_{2}$ and by above,  $\mathbb{Z}a_{1}+\mathbb{Z}a_{2}$
 is $G$-invariant. In particular $$G_{1}(0)\subset G(0)\subset \mathbb{Z}a_{1}+\mathbb{Z}a_{2}\ \ \ \ (1).$$\
 \\
 \\
 $\bullet$ Finally, by Lemma ~\ref{L:bb10}.(iii), $(1-e^{-i\theta})^{2}a, (1-e^{i\theta})^{2}a\in G_{1}(0)$ since $a\in\Gamma_{G}$. As $G_{1}(0)$ is an
 additive group then $$\mathbb{Z}(1-e^{-i\theta})^{2}a+\mathbb{Z}(1-e^{i\theta})^{2}a\subset G_{1}(0)\subset G(0)\ \ \ \ (2).$$\
 \end{proof}
\bigskip

\begin{lem}\label{L:LL1} Let $G$  be the subgroup of $\mathcal{H}(1,\mathbb{C})$ generated by $h=\lambda Id_{\mathbb{C}}$  and  $f=(a,\lambda)$
 with  $a\in \mathbb{C}^{*}$, $\lambda\notin \mathbb{R}$. If $G_{1}(0)$ is discrete then $G_{1}(0)=\mathbb{Z}a_{1}+\mathbb{Z}a_{2}$
 for some basis $(a_{1}, a_{2})$ of $\mathbb{C}$ over $\mathbb{R}$.
\end{lem}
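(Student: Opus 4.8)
The plan is to combine the classification of discrete subgroups of $\mathbb{C}$ from Lemma~\ref{L:9--} with the $\lambda$-invariance of $G_{1}(0)$ recorded in Lemma~\ref{L:bb10}. The decisive observation is that a real line $\mathbb{R}a_{1}$ cannot be stable under multiplication by the non-real scalar $\lambda$; this is exactly what forces $G_{1}(0)$ to be a rank-two (full) lattice rather than a rank-one one. Throughout I will use that $G_{1}(0)$ is a subgroup of $(\mathbb{C},+)$ (it is the image of the translation group $G_{1}=G\cap\mathcal{T}_{1}$ under $T_{v}\mapsto v$), and that being discrete it is in particular closed, so Lemma~\ref{L:9--} applies.

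First I would check that $G_{1}(0)\neq\{0\}$. The center $a$ of $f=(a,\lambda)$ lies in $\Gamma_{G}$, since $a=\mathrm{Fix}(f)$ and $f\in G\backslash\mathcal{T}_{1}$ (because $\lambda\neq 1$). Applying Lemma~\ref{L:bb10}.(ii) with $k=1$ to $a\in\Gamma_{G}$ gives $(1-\lambda)^{2}a\in G_{1}(0)$, and this element is non-zero because $a\neq 0$ and $\lambda\neq 1$. Hence $G_{1}(0)$ is a non-trivial discrete subgroup of $\mathbb{C}$, so by Lemma~\ref{L:9--}.(1) there are exactly two possibilities: either $G_{1}(0)=\mathbb{Z}a_{1}$ for some $a_{1}\in\mathbb{C}^{*}$, or $G_{1}(0)=\mathbb{Z}a_{1}+\mathbb{Z}a_{2}$ for some basis $(a_{1},a_{2})$ of $\mathbb{C}$ over $\mathbb{R}$. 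It remains only to exclude the first (rank-one) case.

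To rule it out, suppose $G_{1}(0)=\mathbb{Z}a_{1}$. By Lemma~\ref{L:bb10}.(iii) one has $\lambda G_{1}(0)\subset G_{1}(0)$, so $\lambda a_{1}\in\mathbb{Z}a_{1}$, that is $\lambda a_{1}=n a_{1}$ for some $n\in\mathbb{Z}$. Since $a_{1}\neq 0$ this forces $\lambda=n\in\mathbb{Z}\subset\mathbb{R}$, contradicting the hypothesis $\lambda\notin\mathbb{R}$. Therefore only the rank-two case survives, which is the desired conclusion $G_{1}(0)=\mathbb{Z}a_{1}+\mathbb{Z}a_{2}$ with $(a_{1},a_{2})$ a basis of $\mathbb{C}$ over $\mathbb{R}$. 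I do not expect a genuine obstacle here: once the two auxiliary lemmas are invoked, the entire argument reduces to the single remark that $\lambda\notin\mathbb{R}$ prevents a one-dimensional $\lambda$-invariant lattice, and discreteness together with non-triviality then leaves the full lattice as the only option.
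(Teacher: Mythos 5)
Your proof is correct and follows essentially the same route as the paper: both arguments produce the nonzero element $(\lambda-1)^{2}a\in G_{1}(0)$ via Lemma~\ref{L:bb10}.(ii), invoke the invariance $\lambda G_{1}(0)\subset G_{1}(0)$ from Lemma~\ref{L:bb10}.(iii) together with $\lambda\notin\mathbb{R}$, and conclude with the classification of discrete subgroups in Lemma~\ref{L:9--}. The only difference is cosmetic: the paper directly exhibits the $\mathbb{R}$-independent pair $a_{1},\lambda a_{1}\in G_{1}(0)$ to land in the rank-two case, whereas you run the same observation as a contradiction to exclude the rank-one case.
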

\medskip

\begin{proof} By Lemma ~\ref{L:bb10}.(ii), $0\neq(\lambda-1)^{2}a\in G_{1}(0)$. Write $a_{1}=(\lambda-1)^{2}a$. By Lemma ~\ref{L:bb10}.(iii),
$a_{2}=\lambda a_{1}\in G_{1}(0)$. As $\lambda\notin \mathbb{R}$, $(a_{1}, a_{2})$ is a basis of $\mathbb{C}$ over
$\mathbb{R}$. Then $\mathbb{Z}a_{1}+\mathbb{Z}a_{2}\subset G_{1}(0)$ since $G_{1}(0)$ is an additive group.
By Lemma ~\ref{L:9--}, $G_{1}(0)=\mathbb{Z}a'_{1}+\mathbb{Z}a'_{2}$ for some basis $(a'_{1}, a'_{2})$ of $\mathbb{C}$ over
$\mathbb{R}$.
\end{proof}
\medskip

\begin{proof}[Proof of Proposition~\ref{p:b10}] By Lemma~\ref{L:222}, $\Delta=\mathbb{C}(b-a)+a$ is $G$-invariant and $G_{/\Delta}$ is a subgroup of $\mathcal{H}(1, \mathbb{C})$.\
\\
First, we can assume that $\mu=e^{i\pi\theta}$, otherwise we replace $g$ by $g\circ f \circ g^{-1}$, second we suppose that $a=0$,
leaving to replace $G$ by $T_{a}\circ G \circ T_{-a}$.
 Then we will show that $G(0)$ is closed and discrete if and only if $G\subset\mathcal{S}_{2}\mathcal{R}_{n}$ or
  $G\subset\mathcal{S}_{3}\mathcal{R}_{n}$. Then $G$  is generated by $h=e^{i\theta}Id_{\mathbb{C}}$  and  $g=(b,e^{i\theta})$
 with  $b\in \mathbb{C}^{*}$ and $\theta\in \mathbb{R}$. If
 $G(0)$ is discrete so is $G_{1}(0)$. Therefore, by Lemma ~\ref{L:LL1}, $G_{1}(0)=\mathbb{Z}a_{1}+\mathbb{Z}a_{2}$ for some basis $(a_{1},a_{2})$
  of $\mathbb{C}$ over $\mathbb{R}$.
 By Lemma~\ref{L:b101}, there exists $P\in GL(2, \mathbb{R})$ such that $P^{-1}R_{\theta}P\in SL(2,
 \mathbb{Z})$, where $R_{\theta}=\left[\begin{array}{cc}
                                         cos\theta & -sin\theta \\
                                         sin\theta & cos\theta
                                       \end{array}
 \right]$. Write $P=\left[\begin{array}{cc}
                                         a' & b' \\
                                         c' & d'
                                       \end{array}
 \right]$  and $A=P^{-1}R_{\theta}P$, so $$A=\left[\begin{array}{cc}
                                         cos\theta-\left(\frac{a'b'+d'c'}{a'd'-b'c'}\right)sin\theta & \left(\frac{b^{2}+d^{2}}{a'd'-b'c'}\right)sin\theta \\
                                         \\
                                         -\left(\frac{a'^{2}+c'^{2}}{a'd'-b'c'}\right)sin\theta & cos\theta+\left(\frac{a'b'+d'c'}{a'd'-b'c'}\right)sin\theta
                                       \end{array}
 \right].$$ As $A\in SL(2,\mathbb{Z})$ then there exist $n,m\in\mathbb{Z}$ such that $$\left\{\begin{array}{c}
                                                       cos\theta-\left(\frac{a'b'+d'c'}{a'd'-b'c'}\right)sin\theta =n\\
                                                       cos\theta+\left(\frac{a'b'+d'c'}{a'd'-b'c'}\right)sin\theta=m
                                                     \end{array}
 \right.$$ so $cos\theta=\frac{n+m}{2}\in\frac{1}{2}\mathbb{Z}$.
 Hence $cos\theta\in\left\{-\frac{1}{2},0,\frac{1}{2}\right\}$ since
 $\theta\notin\pi\mathbb{Z}$, therefore
 $sin\theta\in\left\{-1,-\frac{\sqrt{3}}{2},1,\frac{\sqrt{3}}{2}\right\}$.
 Thus
 $\theta\in(\frac{\pi}{2}+\pi\mathbb{Z})\cup(-\frac{\pi}{3}+\pi\mathbb{Z})\cup(\frac{\pi}{3}+\pi\mathbb{Z})$. Then
 $G\subset\mathcal{S}_{2}\mathcal{R}_{n}$ if $\theta\in(\frac{\pi}{2}+\pi\mathbb{Z})$ and  $G\subset\mathcal{S}_{3}\mathcal{R}_{n}$
  if $\theta\in(-\frac{\pi}{3}+\pi\mathbb{Z})\cup(\frac{\pi}{3}+\pi\mathbb{Z})$. The converse follows from
 Lemma~\ref{L:b10}. The proof is complete.
 \end{proof}
\bigskip

\subsection{{\bf Proof of Theorem~\ref{T:001}}}\

\begin{lem}\label{L:2} $\lambda,\mu\in \mathbb{C}^{*}$ and $a,b\in \mathbb{C}^{n}$, with $a\neq b$. If
 $G$  is the group generated by $f=(a,\lambda)$  and  $g=(b,\mu)$  such that $\overline{G(a)}=\mathbb{C}(b-a)+a$ then
 $\overline{G(z)}=\mathbb{C}(b-a)+a$, for every $z\in\mathbb{C}(b-a)+a$.
\end{lem}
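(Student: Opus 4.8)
The plan is to reduce everything to one statement, namely that the fixed point $a$ lies in $\overline{G(z)}$ for each $z\in\Delta:=\mathbb{C}(b-a)+a$, and to use the hypothesis $\overline{G(a)}=\Delta$ only to convert that into the full conclusion. First I would record the trivial inclusion: by Lemma~\ref{L:222} the set $\Delta$ is $G$-invariant, and since $\Delta$ is a closed complex affine line we get $\overline{G(z)}\subseteq\Delta$ for every $z\in\Delta$. Because $G$ acts on $\mathbb{C}^{n}$ by homeomorphisms, each $\overline{G(z)}$ is itself closed and $G$-invariant. Hence, once I know $a\in\overline{G(z)}$, I get $G(a)\subseteq\overline{G(z)}$ and therefore $\Delta=\overline{G(a)}\subseteq\overline{G(z)}\subseteq\Delta$, which forces equality. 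So the entire problem collapses to showing $a\in\overline{G(z)}$ for each $z\in\Delta$.

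I would then split according to whether $G$ contains a map whose ratio has modulus different from $1$. If $|\lambda|\neq1$, Lemma~\ref{L:9} applies directly and already gives $\overline{G(z)}=\Delta$ for every $z\in\Delta$. If instead $|\lambda|=1$ but $|\mu|\neq1$, I would apply Lemma~\ref{L:9} with the two generators taken in the order $g,f$, obtaining $\overline{G(z)}=\mathbb{C}(a-b)+b$; since $b\in\Delta$ this affine line is exactly $\Delta$, so again $\overline{G(z)}=\Delta$. In this whole case the conclusion holds outright, and the hypothesis on $\overline{G(a)}$ is not even needed.

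The genuinely delicate case is $|\lambda|=|\mu|=1$, where no element contracts toward its fixed point and the hypothesis $\overline{G(a)}=\Delta$ must be invoked. The key observation here is that every generator, and hence every element of $G$ (compositions and inverses of isometries are isometries), is a Euclidean isometry of $\mathbb{C}^{n}$, since $\|\lambda u\|=\|u\|$ whenever $|\lambda|=1$. Given $z\in\Delta=\overline{G(a)}$, I would choose $h_{k}\in G$ with $h_{k}(a)\to z$ and then apply the isometry $h_{k}^{-1}$: from $\|h_{k}^{-1}(z)-a\|=\|h_{k}^{-1}(z)-h_{k}^{-1}(h_{k}(a))\|=\|z-h_{k}(a)\|\to0$ I conclude that the points $h_{k}^{-1}(z)\in G(z)$ converge to $a$, so $a\in\overline{G(z)}$, and the reduction in the first paragraph finishes the proof.

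I expect the main obstacle to be precisely this pure-rotation situation: unlike the case $|\lambda|\neq1$, one cannot iterate a contraction to force an orbit toward $a$, so the argument must instead rest on the rigidity of isometries combined with the standing density hypothesis for $G(a)$. The remaining ingredients are bookkeeping: the $G$-invariance and closedness of $\overline{G(z)}$, and the reuse of Lemma~\ref{L:9} (with the generators possibly interchanged, using $b\in\Delta$) to dispose of every non-isometric case.
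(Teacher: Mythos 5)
Your proof is correct, and its skeleton matches the paper's: the same case split on $|\lambda|,|\mu|$, the same appeal to Lemma~\ref{L:9} when one of the ratios has modulus different from $1$ (with the generators swapped when $|\lambda|=1\neq|\mu|$, a step you make explicit and the paper leaves implicit), and, in the remaining case, the same reduction to showing $a\in\overline{G(z)}$ and then invoking invariance and closedness of $\overline{G(z)}$ together with the hypothesis $\overline{G(a)}=\Delta$. Where you genuinely diverge is in how you obtain $a\in\overline{G(z)}$ when $|\lambda|=|\mu|=1$. The paper takes $g_m\in G$ with $g_m(a)\to z$, writes $g_m=(a_m,\varepsilon_m)$, extracts convergent subsequences of the centers and the ratios to produce a limit map $h=(c,\varepsilon)$ with $h(a)=z$, and then passes to inverses; you instead note that every element of $G$ is a Euclidean isometry and conclude directly from $\|h_k^{-1}(z)-a\|=\|z-h_k(a)\|\to 0$. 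Your route is not only more elementary (no subsequences, no limit maps); it also avoids a real defect in the paper's argument: the paper's claim that boundedness of $(g_m(a))_m$ forces boundedness of $(a_m)_m$ is false, since $g_m(a)=\varepsilon_m a+(1-\varepsilon_m)a_m$ and the centers $a_m$ can escape to infinity while $(1-\varepsilon_m)a_m$ stays bounded (take $\varepsilon_m\to 1$), in which case the limit of the $g_m$ is a translation and not of the form $(c,\varepsilon)$. That gap is repairable by parametrizing the maps as $w\mapsto\varepsilon_m w+b_m$ with $b_m=(1-\varepsilon_m)a_m$, but your isometry argument sidesteps the issue entirely, which is a genuine improvement on the paper's proof of this lemma.
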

\medskip

\begin{proof} Let $z\in\mathbb{C}(b-a)+a$. There are three cases:\
\\
- If $|\lambda|\neq 1$ or $|\mu|\neq 1$, then by Lemma  ~\ref{L:9}, $\overline{G_{1}(z)}=\mathbb{C}(b-a)+a$, so $\overline{G(z)}=\mathbb{C}(b-a)+a$.
\
\\
- If $|\lambda|=|\mu|= 1$, then $G\subset \mathcal{R}_{n}$. Since $z\in \overline{G(a)}$,
then there exists a sequence $(g_{m})_{m}\subset G$ such that $\underset{m\to +\infty}{lim}g_{m}(a)=z$. Write $g_{m}=(a_{m},\varepsilon_{m})$,
with $|\varepsilon_{m}|=1$ for every $m\in\mathbb{N}$. Since $(g_{m}a)_{m}$ is bounded then is $(a_{m})_{m}$. Therefore, there is a subsequence
 $(a_{\varphi(m)})_{m}$  such that $\underset{m\to +\infty}{lim}a_{\varphi(m)}=c$ and $\underset{m\to +\infty}{lim}\varepsilon_{\varphi(m)}=\varepsilon$,
 for some $c\in \mathbb{C}^{n}$ and  $\varepsilon\in S^{1}$. Moreover, $\underset{m\to +\infty}{lim}g_{\varphi(m)}=h=(c,\varepsilon)$. Therefore
 $\underset{m\to +\infty}{lim}g_{\varphi(m)}(a)=h(a)=z.$ So $\underset{m\to +\infty}{lim}g^{-1}_{\varphi(m)}(z)=h^{-1}(z)=a$. It follows that
  $a\in \overline{G(z)}$, hence $\mathbb{C}(b-a)+a=\overline{G(a)}\subset \overline{G(z)}$.
\end{proof}
\medskip

\begin{cor}\label{CC:01} Let $\theta\in \mathbb{R}$, $\mu\in \mathbb{C}$ and $a,b\in \mathbb{C}^{n}$, with $a\neq b$. If
 $G$  is the group generated by $f=(a,e^{i\theta})$  and  $g=(b,\mu)$  such that
 $G\backslash\mathcal{SR}_{n}\neq\emptyset$ then every orbit of $G$ is dense in $\mathbb{C}(b-a)+a$.
\end{cor}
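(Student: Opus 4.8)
The plan is to reduce the problem to dimension one and then split into cases according to the modulus and the angle of the two ratios. First, by Lemma~\ref{L:222}, the complex line $\Delta=\mathbb{C}(b-a)+a$ is $G$-invariant and $G_{/\Delta}$ is a subgroup of $\mathcal{H}(1,\mathbb{C})$. Since only points of $\Delta$ can have orbits dense in $\Delta$, it suffices to prove that $\overline{G(z)}=\Delta$ for every $z\in\Delta$, working entirely inside $\Delta\cong\mathbb{C}$.

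The first case is $|\mu|\neq 1$. Here the generator $g=(b,\mu)$ has ratio off the unit circle, so relabelling $g$ as the first generator and $f=(a,e^{i\theta})$ as the second (the group and the line $\mathbb{C}(a-b)+b=\Delta$ are unchanged), Lemma~\ref{L:9} applies directly and yields $\overline{G(z)}=\Delta$ for every $z\in\Delta$.

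The remaining case is $|\mu|=1$, say $\mu=e^{i\phi}$; then $G\subseteq\mathcal{R}_n$. I would first observe that $\theta$ and $\phi$ cannot both lie in $\pi\mathbb{Z}$: otherwise both ratios $e^{i\theta},e^{i\phi}$ would lie in $\{-1,1\}\subseteq F_2$, so every element of $G$ would have ratio in $F_2$ and hence $G\subseteq\mathcal{S}_{2}\mathcal{R}_{n}\subseteq\mathcal{SR}_{n}$, contradicting $G\backslash\mathcal{SR}_{n}\neq\emptyset$. Relabelling if necessary, I may therefore assume $\theta\notin\pi\mathbb{Z}$, and I split into two subcases. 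If $\theta\notin\pi\mathbb{Q}$, Proposition~\ref{p:a10} gives $\overline{G(a)}=\Delta$, and Lemma~\ref{L:2} upgrades this to $\overline{G(z)}=\Delta$ for every $z\in\Delta$. If instead $\theta\in\pi(\mathbb{Q}\backslash\mathbb{Z})$, Lemma~\ref{L:b10} tells us that every orbit is either dense in $\Delta$ or closed and discrete; the latter possibility would force, by Proposition~\ref{p:b10}, $G\subseteq\mathcal{S}_{2}\mathcal{R}_{n}$ or $G\subseteq\mathcal{S}_{3}\mathcal{R}_{n}$, i.e.\ $G\subseteq\mathcal{SR}_{n}$, again contradicting the hypothesis, so every orbit must be dense in $\Delta$.

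The only real subtlety is the bookkeeping in the rotation case $|\mu|=1$: the generator $f$ may itself lie in $\mathcal{SR}_{n}$ (for instance when $e^{i\theta}=-1$), so the hypothesis $G\backslash\mathcal{SR}_{n}\neq\emptyset$ has to be exploited at the level of the whole group rather than of a single generator. This is exactly what the observation ``not both angles lie in $\pi\mathbb{Z}$'' accomplishes, allowing the relabelling that puts a genuinely non-trivially rotational generator first, and it is also what rules out the closed-discrete alternative through Proposition~\ref{p:b10}. Everything else is a direct appeal to the one-dimensional results already established.
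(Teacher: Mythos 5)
Your proposal assembles exactly the five ingredients that the paper's one-line proof cites (Lemmas~\ref{L:9}, \ref{L:b10}, \ref{L:2} and Propositions~\ref{p:a10}, \ref{p:b10}, after the reduction by Lemma~\ref{L:222}), and your case $|\mu|=1$ is correct, including the two points the paper leaves implicit: that the two angles cannot both lie in $\pi\mathbb{Z}$, and that Proposition~\ref{p:b10} eliminates the discrete alternative in Lemma~\ref{L:b10}. The genuine gap is in your case $|\mu|\neq 1$. There your argument uses no property of $\theta$ whatsoever (and the hypothesis $G\backslash\mathcal{SR}_{n}\neq\emptyset$ is automatic in this case, since $g\notin\mathcal{R}_{n}$), so, if it were valid, it would prove the conclusion for \emph{every} $\theta\in\mathbb{R}$. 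But the conclusion fails for some of them: take $n=1$, $\theta=\pi$, $\mu=2$, $a=0$, $b=1$. Then $G$ is generated by $z\mapsto -z$ and $z\mapsto 2z-1$, so every element of $G$ has the form $z\mapsto \pm 2^{k}z+c$ with $c\in\mathbb{Z}[1/2]$, hence $G(0)\subset\mathbb{R}$ is not dense in $\mathbb{C}(b-a)+a=\mathbb{C}$, although every hypothesis of the corollary as printed holds. So the step ``Lemma~\ref{L:9} applies directly'' cannot be sound.

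The reason it is not sound is that Lemma~\ref{L:9} silently carries the standing hypothesis of Section~3, namely $\lambda\in\mathbb{C}\backslash\mathbb{R}$: its proof first replaces the second generator by a conjugate, so that only the ratio of the \emph{first} generator survives, and then uses the non-reality of that ratio to exclude the case $\overline{G'_{1}(0)}=\alpha\mathbb{R}(a-b)$ (the step beginning ``As $\lambda\in\mathbb{C}\backslash\mathbb{R}$\dots''). Read at face value the lemma is false: the example above, or $f=(0,2)$, $g=(1,3)$, refutes it. With your relabelling the surviving ratio is $\mu$, so for $\mu\in\mathbb{R}$ with $|\mu|\neq 1$ the lemma is both unproved and false as you apply it, and this is exactly the corner your split on $|\mu|$ channels into it. The repair is to read the corollary with the section's standing assumption $e^{i\theta}\notin\mathbb{R}$, i.e.\ $\theta\notin\pi\mathbb{Z}$ (the example shows this assumption is not optional), and then to split on $\theta$ rather than on $|\mu|$: Propositions~\ref{p:a10}, \ref{p:b10} and Lemma~\ref{L:b10} are stated for \emph{arbitrary} $\mu\in\mathbb{C}$, so the two cases $\theta\notin\pi\mathbb{Q}$ (Proposition~\ref{p:a10} plus Lemma~\ref{L:2}) and $\theta\in\pi(\mathbb{Q}\backslash\mathbb{Z})$ (Lemma~\ref{L:b10} plus Proposition~\ref{p:b10}, exactly as in your argument) already cover every $\mu$, including real $\mu$ off the unit circle; Lemma~\ref{L:9} should be invoked only when the off-circle ratio is non-real, where its proof is actually valid. (The paper's own citation list does not say how it routes this corner either, but since its other cited results allow arbitrary $\mu$, the $\theta$-split is the only reading under which the cited proofs go through.)
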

\medskip

\begin{proof} The proof results from Lemma~\ref{L:9}, Proposition ~\ref{p:a10}, Lemma~\ref{L:b10}, Proposition~\ref{p:b10} and Lemma~\ref{L:2}.
\end{proof}
\medskip

\begin{lem} \label{L:3} Let  $G$ be a non abelian subgroup of
$\mathcal{H}(n,\mathbb{C})$. If $G\backslash \mathcal{R}_{n}\neq \emptyset$, then for every $z\in\mathbb{C}^{n}$
 we have $\Gamma_{G}\subset \overline{G(z)}$.
\end{lem}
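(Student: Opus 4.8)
The plan is to let a single contracting element of $G$ carry an arbitrary starting point $z$ into the orbit closure near a center, and then to apply the one-dimensional density result Lemma~\ref{L:9} on an invariant complex line in order to capture an arbitrary point of $\Gamma_{G}$. The appealing feature is that no case distinction on the ratio $\lambda$ attached to the point $a\in\Gamma_G$ is needed; the contracting element does all the work.

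First I would use the hypothesis $G\backslash\mathcal{R}_{n}\neq\emptyset$ to select $g=(b,\mu)\in G$ with $|\mu|\neq 1$, and after replacing $g$ by $g^{-1}=(b,\mu^{-1})$ if necessary, assume $|\mu|<1$. Then for every $z\in\mathbb{C}^{n}$ one has $g^{k}(z)=\mu^{k}(z-b)+b\longrightarrow b$ as $k\to+\infty$, so $b\in\overline{G(z)}$. Since each element of $G$ is a homeomorphism of $\mathbb{C}^{n}$ mapping the orbit $G(z)$ onto itself, one has $h(\overline{G(z)})\subset\overline{G(z)}$ for all $h\in G$, i.e. orbit closures are $G$-invariant; combined with $b\in\overline{G(z)}$ this gives $\overline{G(b)}\subset\overline{G(z)}$. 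Hence it suffices to show $\Gamma_{G}\subset\overline{G(b)}$.

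Now fix $a\in\Gamma_{G}$; by definition there is $f=(a,\lambda)\in G\backslash\mathcal{T}_{n}$ with $\lambda\neq 1$. If $a=b$ there is nothing to do, as $b\in\overline{G(b)}$. If $a\neq b$, consider the two-generated subgroup $H=\langle f,g\rangle\subset G$ and apply Lemma~\ref{L:9} to $H$, taking the generator $g=(b,\mu)$ of nonunit modulus as the distinguished element (the hypotheses of that lemma ask only for one generator of modulus $\neq 1$ and two distinct centers, both arranged here). This yields that the complex line through $a$ and $b$, namely $\mathbb{C}(b-a)+a$, is $H$-invariant and that $\overline{H(w)}=\mathbb{C}(b-a)+a$ for every $w$ on it. Since $b$ lies on this line, $\overline{H(b)}=\mathbb{C}(b-a)+a$, which contains $a$ (coefficient $0$). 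Therefore $a\in\overline{H(b)}\subset\overline{G(b)}\subset\overline{G(z)}$ for every $z\in\mathbb{C}^{n}$, which is exactly $\Gamma_{G}\subset\overline{G(z)}$.

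The only points requiring care are the transfer step $b\in\overline{G(z)}\Rightarrow\overline{G(b)}\subset\overline{G(z)}$, which rests on the $G$-invariance of orbit closures, and the legitimacy of invoking Lemma~\ref{L:9} for $H$ irrespective of $|\lambda|$: the contracting element $g$ simultaneously supplies a point of $\overline{G(z)}$ and, paired with any $f$ fixing $a$, an invariant line on which the one-dimensional density already forces $a$ into the closure. I expect this reduction to the orbit of the single contracting center $b$, rather than a direct attack on each $a\in\Gamma_G$, to be the conceptual crux; everything else is routine.
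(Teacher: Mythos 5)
Your reduction steps are sound: choosing $g=(b,\mu)\in G\setminus\mathcal{R}_{n}$ with $|\mu|<1$ indeed gives $g^{k}(z)\to b$, and the $G$-invariance of orbit closures correctly yields $\overline{G(b)}\subset\overline{G(z)}$ for every $z$. The genuine gap is the appeal to Lemma~\ref{L:9}. That lemma sits in Section 3, whose standing hypothesis is that the distinguished ratio is \emph{non-real}, and its proof uses this essentially: in step (ii) the possibility $\overline{G'_{1}(0)}=\alpha\mathbb{R}(a-b)$ is eliminated only by the remark that a real line cannot be invariant under $z\mapsto\lambda z$ when $\lambda\notin\mathbb{R}$. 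Without that hypothesis the statement is false, and your application lands squarely in the excluded range, because Lemma~\ref{L:3} does not assume $\Lambda_{G}\setminus\mathbb{R}\neq\emptyset$: nothing prevents, say, $\mu=2$ and $\lambda=-1$. In that case both generators of $H=\langle (a,-1),(b,2)\rangle$ are homotheties with real ratio centered on the real affine line $\mathbb{R}(b-a)+a$, so $H$ preserves that real line; hence $\overline{H(b)}\subset\mathbb{R}(b-a)+a\subsetneq\mathbb{C}(b-a)+a$, and the equality $\overline{H(b)}=\mathbb{C}(b-a)+a$ you rely on fails. Your parenthetical claim that the lemma ``asks only for one generator of modulus $\neq1$ and two distinct centers'' is exactly where the argument breaks, and it breaks in the only case where the detour through $b$ is needed at all, namely $|\lambda|=1$ (when $|\lambda|\neq1$ you can iterate $f$ directly and never mention $b$).

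The weaker fact you actually need, $a\in\overline{H(b)}$, is true, but it requires an argument your write-up does not contain. One route: restrict to the invariant line (Lemma~\ref{L:222}), normalize $a=0$, $b=1$; by the computation in Lemma~\ref{L:8}, the commutator $f\circ g\circ f^{-1}\circ g^{-1}$ is the translation by $c=(\lambda-1)(1-\mu)\neq0$, and conjugating by $f^{j}g^{k}$ shows $H$ contains all translations by $\lambda^{j}\mu^{k}c$; the closure of the additive group these generate is a closed, non-discrete subgroup of $\mathbb{C}$ invariant under multiplication by $\lambda$ and $\mu$, so by Lemma~\ref{L:9--} it is either $\mathbb{C}$ or a real line through $c$ (the mixed case $\mathbb{R}v+\mathbb{Z}w$ cannot occur: $\mu^{2}w\in w+\mathbb{R}v$, so iterating the contracting power of $\mu^{2}$ gives points of the closed coset $w+\mathbb{R}v$ tending to $0$, forcing $w\in\mathbb{R}v$); in the line case necessarily $\lambda=-1$ and $\mu\in\mathbb{R}$, so the line is $\mathbb{R}$ itself; either way it contains $-1$, whence $0=1+(-1)\in\overline{H(1)}$. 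For comparison, the paper proves the lemma in one line in the opposite direction: it takes $f=(a,\lambda)\in G\setminus\mathcal{R}_{n}$ and iterates, $f^{k}(z)\to a$. That proof tacitly assumes every $a\in\Gamma_{G}$ is the fixed point of some non-rotation element of $G$, which the definition of $\Gamma_{G}$ does not provide; your plan is visibly designed to avoid that over-assumption and is the better architecture, but as written it delegates the hard case (points of $\Gamma_{G}$ fixed only by unit-modulus elements) to a lemma that does not cover it.
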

\medskip

\begin{proof} Let  $z\in\mathbb{C}^{n}$,
$a\in\Gamma_{G}$ and $f=(a,\lambda)\in G\backslash \mathcal{R}_{n}$ with  $|\lambda|\neq 1$. Suppose that
$|\lambda|>1$ and so
$$\underset{k\longrightarrow-\infty}{lim}f^{k}(z)=\underset{k\longrightarrow-\infty}{lim}\lambda^{k}(z-a)+a=a.$$
\ Hence  $a\in \overline{G(z)}$. It follows that
$\Gamma_{G}\subset \overline{G(z)}$.
\end{proof}
\
\\
{\it Proof of Theorem ~\ref{T:001}:} The proof of Theorem ~\ref{T:001} results from Proposition ~\ref{p:b10} and Corollary ~\ref{CC:01}.
\
\\
\\
{\it Proof of Theorem ~\ref{TT:1}:} Let $\widetilde{G}=\varphi^{-1}\circ G \circ \varphi$, so $\widetilde{G}$ is a non abelian subgroup of
$\mathcal{H}(1, \mathbb{C})$. Firstly, if $\left(H_{2}\cup H_{3}\right)\backslash\{\theta,\theta'\}\neq\emptyset$
then $\widetilde{G}\backslash \mathcal{SR}_{2}\neq\emptyset$. Therefore:\
\\
The proof of (1).(i) results from Theorem ~\ref{T:001}. Let's prove (1).(ii):\
\\
Suppose that  $\theta\in H_{2}$ and $\theta'\in H_{3}$, then by using the analytic form $f=\varphi^{-1}\circ R_{\theta}\circ \varphi$ (resp. $g=\varphi^{-1}\circ R_{\theta'}\circ \varphi$)
 of $R_{\theta}$ (resp. $R_{\theta'}$)
 we have $f=(a,e^{i\theta})\in \mathcal{S}_{2}\mathcal{R}_{2}$ and
$g=(b,e^{i\theta'})\in \mathcal{S}_{3}\mathcal{R}_{2}$, where $\varphi(a)$ (resp.  $\varphi(b)$) is the center of $R_{\theta}$ (resp. $R_{\theta'}$). Then
$f\circ g=\left(c,e^{i(\theta+\theta')}\right)$ with $c=\frac{e^{i\theta}(b-a)-a}{1-e^{i(\theta+\theta')}}$. See that
$\theta+\theta'\in\left(\frac{5\pi}{6}+\pi\mathbb{Z}\right)\cup \left(\frac{7\pi}{6}+\pi\mathbb{Z}\right)$. Then
$\theta+\theta'\notin H_{2}\cup H_{2}$. The assertion $(1).(ii)$
follows then from (1).(i).\
\\
(2) In this case, we can assume that $\widetilde{G}\subset \mathcal{SR}_{2}$. By Lemma ~\ref{L:1b}, if $\widetilde{G}\subset \mathcal{S}_{2}\mathcal{R}_{2}$ or
 $\widetilde{G}\subset \mathcal{S}_{3}\mathcal{R}_{2}$ then every orbit is closed and discrete. By (1), it remains to verify the following case:
  $\theta,\theta'\in H_{i}$  for some $i\in\{2,3\}$. Then $\widetilde{G}\subset \mathcal{S}_{i}\mathcal{R}_{2}$.
  The results follows from Lemma ~\ref{L:1b}. The proof is complete.
\bigskip

\section{\textbf{Some results in the case  $G\backslash \mathcal{SR}_{n}\neq\emptyset$ for $n\geq 1$}}
\medskip

We give some Lemmas and propositions, will be
used to prove Theorem ~\ref{T:1}.

\medskip

\begin{lem}\label{L:8} Let $G$ be a non abelian subgroup of $\mathcal{H}(n, \mathbb{C})$ such that
$G\backslash \mathcal{SR}_{n}\neq\emptyset$. So $G_{1}\neq\{id_{\mathbb{C}^{n}}\}$ and if $0\in \Gamma_{G}$ then $G_{1}(0)\subset\Gamma_{G}$.
\end{lem}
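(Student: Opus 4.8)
The plan is to establish the two assertions separately, in each case by producing explicit elements of $G$.

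For the first assertion I would exploit that commutators of homotheties are translations. Since $G$ is non abelian, Lemma~\ref{L:6}(iii) furnishes $f=(a,\alpha)$ and $g=(b,\beta)$ in $G\backslash\mathcal{T}_{n}$ with $a\neq b$; in particular $\alpha\neq1$ and $\beta\neq1$. Writing $f(z)=\alpha z+(1-\alpha)a$ and $g(z)=\beta z+(1-\beta)b$, the linear part of the commutator $f\circ g\circ f^{-1}\circ g^{-1}$ is $\alpha\beta\alpha^{-1}\beta^{-1}=1$, so this commutator is a translation. Carrying out the four-fold composition and reading off the constant term gives
\[
f\circ g\circ f^{-1}\circ g^{-1}(z)=z+(1-\alpha)(1-\beta)(a-b),\qquad z\in\mathbb{C}^{n}.
\]
Since $\alpha\neq1$, $\beta\neq1$ and $a\neq b$, the translation vector $(1-\alpha)(1-\beta)(a-b)$ is nonzero, so $f\circ g\circ f^{-1}\circ g^{-1}\in G_{1}\backslash\{id_{\mathbb{C}^{n}}\}$, whence $G_{1}\neq\{id_{\mathbb{C}^{n}}\}$. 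Note this step only uses that $G$ is non abelian.

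For the second assertion, suppose $0\in\Gamma_{G}$. Then $0=\mathrm{Fix}(f_{0})$ for some $f_{0}\in G\backslash\mathcal{T}_{n}$, that is $f_{0}=(0,\lambda_{0})=\lambda_{0}\,id_{\mathbb{C}^{n}}$ with $\lambda_{0}\neq1$. Let $w\in G_{1}(0)$, so that the translation $T_{w}\in G_{1}\subset G$ satisfies $T_{w}(0)=w$. I would then conjugate $f_{0}$ by this translation: set $g=T_{w}\circ f_{0}\circ T_{-w}\in G$. Conjugation by $T_{w}$ moves the fixed point $0$ of $f_{0}$ to $T_{w}(0)=w$ and preserves the ratio, so $g=(w,\lambda_{0})$; since $\lambda_{0}\neq1$ this gives $g\in G\backslash\mathcal{T}_{n}$ with $\mathrm{Fix}(g)=w$. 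Hence $w\in\Gamma_{G}$, and as $w\in G_{1}(0)$ was arbitrary, $G_{1}(0)\subset\Gamma_{G}$.

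The only delicate point is the commutator identity; the main (purely computational) obstacle is to run the composition $f\circ g\circ f^{-1}\circ g^{-1}$ cleanly enough to confirm both that the linear part collapses to the identity and that the translation vector is exactly $(1-\alpha)(1-\beta)(a-b)$. One may sanity-check this against the special case $\alpha=\beta=\lambda$ with $f=\lambda\,id_{\mathbb{C}^{n}}$ and $g=(b-a,\lambda)$, which recovers the vector $-(\lambda-1)^{2}(b-a)$ already used in Lemma~\ref{L:9}. Everything else reduces to the elementary observation that translations in $G$ transport fixed points of elements of $G\backslash\mathcal{T}_{n}$ by conjugation.
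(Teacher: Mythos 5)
Your proof is correct and follows essentially the same route as the paper: the first assertion via the commutator $f\circ g\circ f^{-1}\circ g^{-1}$, which is a translation that is nontrivial because the two homotheties fail to commute (your explicit vector $(1-\alpha)(1-\beta)(a-b)$ agrees with the paper's $(\lambda-1)b+(1-\mu)a$ written in centered form), and the second assertion by conjugating the homothety $\lambda_{0}\,id_{\mathbb{C}^{n}}$ fixing $0$ by the translations $T_{w}\in G_{1}$. If anything, your version is slightly cleaner on the second point, since you only need the conjugate to lie in $G\backslash\mathcal{T}_{n}$ (as the definition of $\Gamma_{G}$ requires), whereas the paper asserts it lies in $G\backslash\mathcal{SR}_{n}$, which is more than is needed or justified there.
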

\medskip

\begin{proof} Let $f, g\in G$ such that
 $f\circ g\neq g\circ f$. Write $f:z\longmapsto \lambda z +a$ and $g:z\longmapsto \mu z +b$.
  So for every $z\in\mathbb{C}^{n}$, one has
  \begin{align*}
  f\circ g\circ f^{-1}\circ g^{-1}(z)& =\lambda\left(\mu\left(\frac{1}{\lambda}\left(\frac{1}{\mu}z-\frac{b}{\mu}\right)-\frac{a}{\lambda}\right)+b\right)+a\\
  \ & = z+\left(\lambda-1\right)b+\left(1-\mu\right)a.
  \end{align*}
Hence $f\circ g\circ f^{-1}\circ g^{-1}=T_{c}\in G_{1}\backslash\{id_{\mathbb{C}^{n}}\}$, with $c=\left(\lambda-1\right)b+\left(1-\mu\right)a$.\
\\
Suppose now that $0\in \Gamma_{G}$, so there  $h=\lambda id_{\mathbb{C}^{n}}\in G\backslash \mathcal{SR}_{n}$ for some $\lambda\in\Lambda_{G}$.
Let $a\in G_{1}(0)$, then $T_{a}\circ h \circ T_{-a}=(a,\lambda)\in G\backslash \mathcal{SR}_{n}$. So $a\in \Gamma_{G}$. The proof is complete.
\end{proof}
\bigskip

\begin{prop}\label{p:1} Let  $G$\ be a non abelian subgroup of
$\mathcal{H}(n,\mathbb{C})$  such that $\Lambda_{G}\backslash\mathbb{R}\neq\emptyset$ and
$G\backslash\mathcal{SR}_{n}\neq\emptyset$.  Then $\overline{G(z)}=E_{G}$, for every $z\in E_{G}$.\
\end{prop}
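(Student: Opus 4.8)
The plan is to reduce to the situation where $E_{G}$ is a vector space, and then to prove the stronger statement that the closure of the translation‑orbit $\overline{G_{1}(0)}$ already equals $E_{G}$; since $G_{1}$ acts by translations, this immediately yields $\overline{G(z)}=E_{G}$ for every $z\in E_{G}$ at once. First I would normalize the origin: pick $a\in\Gamma_{G}$ and pass to $G'=T_{-a}\circ G\circ T_{a}$. By Lemma~\ref{L:1}(iii) one has $E_{G'}=T_{-a}(E_{G})$ and $\Lambda_{G'}=\Lambda_{G}$, and $0\in\Gamma_{G'}$ makes $E_{G'}$ a vector space; since $\overline{G'(w)}=T_{-a}(\overline{G(w+a)})$ and $T_{-a}(E_{G})=E_{G'}$, and since $G'$ still satisfies $\Lambda_{G'}\backslash\mathbb{R}\neq\emptyset$ and $G'\backslash\mathcal{SR}_{n}\neq\emptyset$, the assertion for $G'$ (for all $w\in E_{G'}$) is equivalent to the assertion for $G$. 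So I assume $0\in\Gamma_{G}$ and $\dim E_{G}=p$. By Lemma~\ref{L:1}(i),(ii) I fix $f_{i}=(a_{i},\lambda)\in G\backslash\mathcal{SR}_{n}$, $i=1,\dots,p$, with $\lambda\in\Lambda_{G}\backslash(F_{2}\cup F_{3}\cup\mathbb{R})$ and $(a_{1},\dots,a_{p})$ a basis of $E_{G}$; because $0\in\Gamma_{G}$ I also fix $h=\lambda_{0}\,\mathrm{id}_{\mathbb{C}^{n}}\in G$ with $\lambda_{0}\neq1$.

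Next I would place a full complex line inside $\overline{G_{1}(0)}$ for each $i$. The commutator computation of Lemma~\ref{L:8} gives $h\circ f_{i}\circ h^{-1}\circ f_{i}^{-1}=T_{w_{i}}$ with $w_{i}$ a nonzero element of $\mathbb{C}a_{i}$, so $w_{i}\in G_{1}(0)$. Conjugating any $T_{v}\in G_{1}$ by $f_{i}$ yields $T_{\lambda v}$, so $G_{1}(0)$ is stable under multiplication by $\lambda$ and $\lambda^{-1}$, and hence so is the closed subgroup $\overline{G_{1}(0)}$ of $(E_{G},+)$. Set $G_{i}=\langle h,f_{i}\rangle$, which is non abelian by Lemma~\ref{L:6}(i) and satisfies $G_{i}\backslash\mathcal{SR}_{n}\neq\emptyset$ because $f_{i}\notin\mathcal{SR}_{n}$. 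Restricting to the $G_{i}$‑invariant line $\Delta_{i}=\mathbb{C}a_{i}$ (Lemma~\ref{L:222}), the closed subgroup $\overline{(G_{i})_{1}(0)}\subseteq\mathbb{C}a_{i}$ is invariant under multiplication by $\lambda$ and contains $w_{i}\neq0$. The one‑dimensional analysis shows it is not discrete (Lemma~\ref{L:9} when $|\lambda|\neq1$; for $|\lambda|=1$ this is forced by Corollary~\ref{CC:01} and Proposition~\ref{p:b10}, since $G_{i}\not\subseteq\mathcal{SR}_{n}$). By the classification of closed subgroups of $\mathbb{C}$ in Lemma~\ref{L:9--}, a non‑discrete closed subgroup is $\mathbb{C}$, $\mathbb{R}\alpha$, or $\mathbb{R}\alpha+\mathbb{Z}\beta$; the last two cannot be invariant under multiplication by $\lambda\notin\mathbb{R}$, so $\overline{(G_{i})_{1}(0)}=\mathbb{C}a_{i}$. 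Thus $\mathbb{C}a_{i}\subseteq\overline{G_{1}(0)}$ for every $i$.

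Finally, $\overline{G_{1}(0)}$ is a closed subgroup of $(E_{G},+)$ containing each subspace $\mathbb{C}a_{i}$, hence it contains all finite sums and therefore $\mathbb{C}a_{1}+\dots+\mathbb{C}a_{p}=E_{G}$; combined with $\overline{G_{1}(0)}\subseteq\overline{G(0)}\subseteq E_{G}$ (Lemma~\ref{L:020}(i) and the $G$‑invariance and closedness of $E_{G}$) this gives $\overline{G_{1}(0)}=E_{G}$. For arbitrary $z\in E_{G}$, since $G_{1}$ acts by translations, $\overline{G_{1}(z)}=z+\overline{G_{1}(0)}=z+E_{G}=E_{G}$, whence $E_{G}=\overline{G_{1}(z)}\subseteq\overline{G(z)}\subseteq E_{G}$, i.e. $\overline{G(z)}=E_{G}$.

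The main obstacle is exactly the passage from density along each coordinate line $\mathbb{C}a_{i}$ to density on the whole span $E_{G}$, and within that, the step $\overline{(G_{i})_{1}(0)}=\mathbb{C}a_{i}$ in the case $|\lambda|=1$. When $\lambda$ is a root of unity the rotations of $G_{i}$ have finite order, so the fullness of the line must come from the additive group generated by $\{\lambda^{k}w_{i}\}$; this group is dense in $\mathbb{C}a_{i}$ precisely because $\lambda$ is not of crystallographic order, i.e. $\lambda\notin F_{2}\cup F_{3}$. This is where both hypotheses are essential: $G\backslash\mathcal{SR}_{n}\neq\emptyset$ supplies $\lambda\notin F_{2}\cup F_{3}$, and $\Lambda_{G}\backslash\mathbb{R}\neq\emptyset$ supplies $\lambda\notin\mathbb{R}$, the latter being what rules out the one‑parameter closed subgroups $\mathbb{R}\alpha$ and $\mathbb{R}\alpha+\mathbb{Z}\beta$ above.
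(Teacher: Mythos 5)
Your proposal is correct in substance, but it takes a genuinely different route from the paper's proof. The paper first proves $\overline{G(a)}=E_{G}$ for one well-chosen point $a\in\Gamma_{G}$: it gets density of the two-generator orbits $G_{k}(a)$ in the lines $D_{k}=\mathbb{C}(a_{k}-a)+a$ via Corollary~\ref{CC:01}, then runs an induction on $\dim E_{G}$ (Lemma~\ref{L:12}, with Lemma~\ref{L:11} used to slide lines around) to pass from density on the pencil of lines through $a$ to density in $E_{G}$; finally it must transfer the conclusion from the special point $a$ to an arbitrary $z\in E_{G}$, which costs a case analysis ($G\backslash\mathcal{R}_{n}\neq\emptyset$ via Lemma~\ref{L:3}, versus $G\subset\mathcal{R}_{n}$ via extraction of convergent subsequences of group elements). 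You instead prove the stronger statement $\overline{G_{1}(0)}=E_{G}$ for the translation subgroup. This buys two simplifications at once: since $\overline{G_{1}(0)}$ is a closed additive subgroup, containing each line $\mathbb{C}a_{i}$ automatically gives the whole sum $\mathbb{C}a_{1}+\dots+\mathbb{C}a_{p}=E_{G}$, with no induction and no analogue of Lemma~\ref{L:12}; and since translations are base-point free, the passage from $z=0$ to arbitrary $z\in E_{G}$ is the one-line identity $\overline{G_{1}(z)}=z+\overline{G_{1}(0)}$, with no subsequence extraction and no case split. The commutator $T_{w_{i}}$, the stability of $G_{1}(0)$ under multiplication by $\lambda$ via conjugation, and the classification of closed subgroups of $\mathbb{C}$ (Lemma~\ref{L:9--}) then do the work that the paper's one-dimensional orbit analysis does.

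One step needs better support than the citations you give: the non-discreteness of $(G_{i})_{1}(0)$ when $|\lambda|=1$. Corollary~\ref{CC:01} and Proposition~\ref{p:b10} as stated concern the full orbits $G_{i}(a_{i})$, not the translation part, so they do not literally force what you need. But the fact is true, and you already have the ingredients: $(G_{i})_{1}(0)$ contains $\lambda^{k}w_{i}$ for all $k\in\mathbb{Z}$. If $\lambda=e^{i\theta}$ with $\theta\notin\pi\mathbb{Q}$, these points are dense in a circle of positive radius, so $(G_{i})_{1}(0)$ is not discrete. If $\theta\in\pi\mathbb{Q}$ and $(G_{i})_{1}(0)$ were discrete, then, since it contains the $\mathbb{R}$-independent vectors $w_{i}$ and $\lambda w_{i}$, Lemma~\ref{L:9--}(1) makes it a lattice $\mathbb{Z}b_{1}+\mathbb{Z}b_{2}$ invariant under multiplication by $\lambda$; the computation of Lemma~\ref{L:b101} (and the trace argument in the proof of Proposition~\ref{p:b10}) then forces $2\cos\theta\in\mathbb{Z}$, hence $\theta\in H_{2}\cup H_{3}$ and $\lambda\in F_{2}\cup F_{3}\cup\mathbb{R}$, contradicting $\lambda\notin F_{2}\cup F_{3}\cup\mathbb{R}$. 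This is exactly the mechanism you describe informally in your closing paragraph; with it written in place of the loose citations, your argument is complete.
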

\medskip

To prove the Proposition, we need the following Lemmas:

\begin{lem}\label{L:11} Let  $G$ be a non abelian subgroup of
$\mathcal{H}(n,\mathbb{C})$, $f\in G$ and  $u,v\in\mathbb{C}^{n}$  then
$f(\mathbb{C}u+v)=\mathbb{C}u+f(v)$.\
\end{lem}
\medskip

\begin{proof}  Every  $f\in G$  has the form
$f(z)=\lambda z+a$, $z\in \mathbb{C}^{n}$. Let $\alpha\in
\mathbb{C}$  then $f(\alpha u+v)=\lambda(\alpha
u+v)+a=\lambda\alpha u+(\lambda u+v)=\lambda\alpha u+f(v)$. So $f(\mathbb{C}u+v)\subset\mathbb{C}u+f(v)$, then
$f(\mathbb{C}u+v)=\mathbb{C}u+f(v)$.\
\end{proof}
\medskip

\begin{lem}\label{L:12} Let  $G$ be a non abelian subgroup of
$\mathcal{H}(n,\mathbb{C})$  such that $E_{G}$ is a vector
subspace of $\mathbb{C}^{n}$ and $\Gamma_{G}\neq\emptyset$.  Let \ $a, a_{1},\dots,a_{p}\in
\Gamma_{G}$  such that  $(a_{1},\dots,a_{p})$ and $(a_{1}-a,\dots,a_{p}-a)$ are two basis of
$E_{G}$ and let $D_{k}=\mathbb{C}(a_{k}-a)+a$,
$1\leq k \leq p$.  If \ $D_{k}\subset\overline{G(a)}$  for every
 $1\leq k \leq p$,  then  $\overline{G(a)}=E_{G}$.
\end{lem}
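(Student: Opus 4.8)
The plan is to reduce to the case $a=0$ and then to show that a homothety of one fixed ratio $\lambda$ can be placed with centre at \emph{every} point of the orbit closure $K=\overline{G(a)}$; sweeping the given lines $D_k$ by these homotheties will fill $E_G$. For the reduction, note first that $E_G$ is closed and $G$-invariant by Lemma~\ref{L:020}(ii), and $a\in E_G$, so $\overline{G(a)}\subset E_G$; thus only $E_G\subset\overline{G(a)}$ remains. I would set $G'=T_{-a}\circ G\circ T_a$: it is non abelian, the element of $G$ fixing $a$ conjugates into a nontrivial homothety fixing $0$ (so $0\in\Gamma_{G'}$), the lines $D_k$ map to $\mathbb{C}(a_k-a)$ through the origin, and $T_{-a}\bigl(\overline{G(a)}\bigr)=\overline{G'(0)}=:K$. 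Writing $b_k=a_k-a$ and $V_j=\mathbb{C}b_1+\cdots+\mathbb{C}b_j$, the hypothesis that $(a_1-a,\dots,a_p-a)$ is a basis of the vector space $E_G$ gives $V_p=E_G$, and $\mathbb{C}b_k\subset K$ for every $k$; so it suffices to prove $E_G=V_p\subset K$.

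The key step would be the claim that \emph{for every $w\in K$ the homothety $(w,\lambda)$ maps $K$ into itself}, where $h=\lambda\,\mathrm{id}_{\mathbb{C}^n}\in G'$ fixes $0$ with $\lambda\neq1$ (such $h$ exists because $0\in\Gamma_{G'}$). Given $w\in\overline{G'(0)}$, I pick $g_m\in G'$ with $g_m(0)\to w$; if $g_m(z)=\mu_m z+v_m$ then
\[
g_m\circ h\circ g_m^{-1}(z)=\lambda z+(1-\lambda)v_m ,
\]
which is the homothety of ratio $\lambda$ with centre $v_m=g_m(0)$. As $g_m(0)\to w$ these converge pointwise to $(w,\lambda)$; since each of them lies in $G'$ and hence preserves $K$, and $K$ is closed, the limit satisfies $(w,\lambda)(K)\subset K$.

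With this in hand I would prove $V_j\subset K$ by induction on $j$. For $j=1$, $V_1=\mathbb{C}b_1\subset K$ is a hypothesis. Assuming $V_j\subset K$, for each $w\in V_j$ the key step together with Lemma~\ref{L:11} gives
\[
(w,\lambda)\bigl(\mathbb{C}b_{j+1}\bigr)=\mathbb{C}b_{j+1}+(1-\lambda)w\subset K .
\]
Letting $w$ range over $V_j$ and using $(1-\lambda)V_j=V_j$ (a $\mathbb{C}$-subspace, $\lambda\neq1$), this yields $V_{j+1}=\mathbb{C}b_{j+1}+V_j\subset K$. Hence $E_G=V_p\subset K=\overline{G'(0)}=T_{-a}\bigl(\overline{G(a)}\bigr)$, so $E_G=T_a(E_G)\subset\overline{G(a)}$; combined with $\overline{G(a)}\subset E_G$ this proves $\overline{G(a)}=E_G$.

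The hard part is the key step: one must make sure that the conjugates $g_m\circ h\circ g_m^{-1}$ genuinely converge to $(w,\lambda)$ and that this limiting map still preserves the closed invariant set $K$. This ``density of admissible centres'' is precisely what promotes the finitely many given lines $D_k$ into the full subspace $E_G$, and it is the only place where the full strength of $K=\overline{G'(0)}$ (rather than the bare orbit $G'(0)$) is used; everything else is a routine induction on the dimension.
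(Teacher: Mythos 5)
Your proof is correct, and it takes a genuinely different route from the paper's. The paper proves the lemma by induction on $p=\dim E_G$: it selects homotheties $f=(a,\lambda)$, $f_k=(a_k,\lambda_k)$ in $G\setminus\mathcal{SR}_n$, forms the subgroup generated by $f,f_1,\dots,f_{p-1}$, identifies its $E$-space as $\mathrm{Aff}(a,a_1,\dots,a_{p-1})$ via Lemma~\ref{L:1111}, gets density in that hyperplane from Corollary~\ref{CC:01}, Lemma~\ref{L:1}(iii) and the induction hypothesis, and finally sweeps the line $D_p$ along a sequence $f_m(a)\to y$ using Lemma~\ref{L:11}. You avoid subgroups and the dimension induction altogether: your key step --- every $w\in K=\overline{G'(0)}$ is the centre of a ratio-$\lambda$ homothety mapping $K$ into itself, obtained as a pointwise limit of the conjugates $g_m\circ h\circ g_m^{-1}$ with $g_m(0)\to w$ --- replaces the appeal to Corollary~\ref{CC:01}, and filling $E_G$ becomes the purely linear induction $V_j\subset K\Rightarrow V_{j+1}=V_j+\mathbb{C}b_{j+1}\subset K$, which is sound since $(1-\lambda)V_j=V_j$. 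Both arguments share the same underlying mechanism (parallel complex lines anchored at points of the orbit closure, plus a closedness/limit argument), but yours buys self-containedness: it runs under the lemma's literal hypotheses, since $a\in\Gamma_G$ alone produces $h=\lambda\,\mathrm{id}$ with $\lambda\notin\{0,1\}$ after conjugation, whereas the paper's proof tacitly uses elements $(a,\lambda),(a_k,\lambda_k)\in G\setminus\mathcal{SR}_n$ that the stated hypotheses do not guarantee (they do exist where the lemma is actually applied, in Proposition~\ref{p:1}). What the paper's route buys in exchange is integration with its surrounding machinery: Corollary~\ref{CC:01} and Lemma~\ref{L:1111} are needed for Proposition~\ref{p:1} anyway, so its dimension induction recycles results already in hand rather than introducing a new limiting argument.
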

\medskip

\begin{proof} The proof is done by induction on  $\mathrm{dim}(E_{G})=p\geq 1$.
\\
$\bullet$ For $p=1$,   if there exists $a,a_{1}\in\Gamma_{G}$ with $a\neq a_{1}$ such that $D_{1}\subset \overline{G(a)}$, where
$D_{1}=\mathbb{C}(a_{1}-a)+a$,  then $\overline{G(a)}=E_{G}$, since $D_{1}=E_{G}=\mathbb{C}$.
\
\\
$\bullet$ Suppose that Lemma ~\ref{L:12} is true until dimension $p-1$. Let  $G$ be
a non abelian subgroup of \ $\mathcal{H}(n,\mathbb{C})$ with $\Gamma_{G}\neq\emptyset$ and let
$a, a_{1},\dots,a_{p}\in \Gamma_{G}$  such that
$(a_{1},\dots,a_{p})$  is a basis of  $E_{G}$.  Suppose that
$D_{k}\subset\overline{G(a)}$  for every  $1\leq k \leq p$.
\\
Denote by $H$  the vector
subspace of  $E_{G}$ generated by
$(a_{1}-a),\dots,(a_{p-1}-a)$ and $\Delta_{p-1}=T_{a}(H)$. We have  $\Delta_{p-1}=Aff(a,a_{1},\dots, a_{p-1})$.
\\
Set $\lambda,\lambda_{k}\in \Gamma_{G}$,   $1\leq k
\leq p-1$   such that $f=(a,\lambda), f_{k}=(a_{k},\lambda_{k})\in G\backslash\mathcal{SR}_{n}$.  We let
$G_{k}$ be the group generated by $f$ and $f_{k}$ for every $1\leq k \leq
p-1$, so $G_{k}\backslash\mathcal{SR}_{n}\neq\emptyset$. By Corollary ~\ref{CC:01},
we have $\overline{G_{k}(a)}=D_{k}$ for every $k=1,\dots, p-1$. Let  $G'$
be the subgroup of $G$ generated by $f$, $f_{1}$,\dots,$f_{p-1}$,   then
$D_{k}\subset \overline{G'(a)}$ for every $1\leq k \leq p-1$.
\\
By Lemma ~\ref{L:1111} we have $E_{G'}=\Delta_{p-1}$. Let
$G''=T_{-a}\circ G'\circ T_{a}$,   by Lemma ~\ref{L:1}.(iii) we have
$E_{G''}=T_{-a}(\Delta_{p-1})=H$  and
$D'_{k}=T_{-a}(D_{k})\subset \overline{G''(0)}$  for every $1\leq
k \leq p-1$. By induction hypothesis applied to $G''$ we have
$\overline{G''(0)}=H$  so  $\overline{G'(a)}=\Delta_{p-1}$.
Since $G'(a)\subset G(a)$, then $$\Delta_{p-1}\subset \overline{G(a)}\ \ \ \ \ \ \ \ \ \
(1)$$\
\\
Let  $z\in E_{G}\backslash \Delta_{p-1}$  and
$D=\mathbb{C}(a_{p}-a)+z$. Since $(a_{1}-a,\dots,a_{p}-a)$ is a basis of $E_{G}$, so
$H\oplus\mathbb{C}(a_{p}-a)=E_{G}$. As $a,z\in E_{G}$,
then $z-a=x+\alpha(a_{p}-a)$ for some $x\in H$ and
$\alpha\in\mathbb{C}$. Let $y=x+a$, as  $T_{a}(H)=\Delta_{p-1}$ we
have  $y\in\Delta_{p-1}$, and
\begin{align*}
y& =x+a\\
\ & =z-a-\alpha(a_{p}-a)+a\\
\ & =-\alpha(a_{p}-a)+z\in D.
\end{align*}

Hence \ \ $y\in \Delta_{p-1}\cap D.$
\
\\
\\
 By (1) we have $y\in
\overline{G(a)}$.  Then there exists a sequence
$(f_{m})_{m\in\mathbb{N}}$  in  $G$  such that \
$\underset{m\longrightarrow+\infty}{lim}f_{m}(a)=y.$ For every
$m\in\mathbb{N}$  denote by  $f_{m}=(b_{m}, \lambda_{m})$.
\
\\
\\
Remark that $D=\mathbb{C}(a_{p}-a)+y$, since $z,y\in D$. By Lemma ~\ref{L:11} we have
$f_{m}(D_{p})=f_{m}(\mathbb{C}(a_{p}-a)+a)=\mathbb{C}(a_{p}-a)+f_{m}(a)$.
Since  $\underset{m\longrightarrow+\infty}{lim}f_{m}(a)=y$  then for every $v=\alpha(a_{p}-a)+y\in D$, $\alpha\in \mathbb{C}$, one has\
$$\underset{m\longrightarrow+\infty}{lim}f_{m}(\alpha(a_{p}-a)+a)=v.$$\
\\
As $\alpha(a_{p}-a)+a\in D_{p}\subset \overline{G(a)}$,  then  $v\in
\overline{G(a)}$. Therefore $D\subset
\overline{G(a)}$, so  $z\in \overline{G(a)}$, hence
$$E_{G}\backslash \Delta_{p-1}\subset \overline{G(a)}\ \ \ \ \ \ \ \  \ \ \ (2).$$ By
$(1)$ and $(2)$ we obtain  $E_{G}\subset \overline{G(a)}$. Since $\Gamma_{G}\neq\emptyset$ then by Lemma
~\ref{L:020}.(ii), we have $E_{G}$  is $G$-invariant, so $G(a)\subset E_{G}$ since $a\in
E_{G}$. It follows that
$\overline{G(a)}=E_{G}$.
\end{proof}
\bigskip

\begin{proof}[Proof of Proposition ~\ref{p:1}] Let $G$ be a non abelian
subgroup of $\mathcal{H}(n, \mathcal{R})$. Since $G\backslash \mathcal{SR}_{n}\neq\emptyset$ then $\Gamma_{G}\neq\emptyset$
 and suppose that
$E_{G}$  is a vector subspace of $\mathbb{C}^{n}$, (one can replace $G$ by  $G'=T_{-a}\circ G \circ T_{a}$, for some $a\in \Gamma_{G}$).
\\
\\
(i) We will prove that there exists $a\in\Gamma_{G}$ such that
 $\overline{G(a)}=E_{G}$.  By Lemmas ~\ref{L:1},(ii) and ~\ref{L:1200}, there exist
$f=(a,\lambda),f_{1}=(a_{1},\lambda),\dots,f_{p}=(a_{p},\lambda)\in G\backslash\mathcal{SR}_{n}$  such that
$\lambda\in\Lambda_{G}\backslash\mathbb{R}$, $(a_{1},\dots,a_{p})$ and
 $(a_{1}-a,\dots,a_{p}-a)$ are two basis of $E_{G}$. Denote by $D_{k}=\mathbb{C}(a_{k}-a)+a$, $1\leq k \leq p$.
 For every $k=1,\dots, p-1$, we let $G_{k}$ be
the group generated by $f$ and $f_{k}$. One has $G_{k}\backslash\mathcal{SR}_{n}\neq\emptyset$, so by
Corollary ~\ref{CC:01},  we have  $D_{k}=\overline{G_{k}(a)}\subset
\overline{G(a)}$ for every $1\leq k \leq p$. By Lemma ~\ref{L:12}, we have
$\overline{G(a)}=E_{G}$.
\
\\
\\
 (ii) We will prove that  $\overline{G(z)}=E_{G}$ for every $z\in E_{G}$.
By (i) there exists $a\in\Gamma_{G}$ such that $\overline{G(a)}=E_{G}$.
There are two cases:\
 \\
 $\bullet$ If $G\backslash \mathcal{R}_{n}\neq\emptyset$, so by Lemma~\ref{L:3}, $\Gamma_{G}\subset \overline{G(z)}$.
 By Lemma ~\ref{L:020}.(ii), $\Gamma_{G}$ and $E_{G}$ are $G$-invariant, then
   $\overline{G(a)}\subset\overline{\Gamma_{G}}\subset \overline{G(z)}$ and so $E_{G}=\overline{G(z)}$.\
\\
$\bullet$  Suppose that  $G\subset \mathcal{R}_{n}$. Since $\overline{G(a)}=E_{G}$,  there exists a
sequence $(f_{m})_{m}\subset G$ such that $\underset{m\to +\infty}{lim}f_{m}(a)=z$. There are two situations:\
 \\
 - $f_{m}=(a_{m},\lambda_{m})\in G\backslash\mathcal{T}_{n}$ for every $m>n_{0}$, for some $n_{0}>1$.
 One has  $f_{m}(a)=\lambda_{m}a+(1-\lambda_{m})a_{m}$ and the sequence $(f_{m}(a))_{m}$
is bounded, then  the sequence $((1-\lambda_{m})a_{m})_{m}$ is bounded and so is $(a_{m})_{m}$. Therefore, there exists a subsequence
 $(\lambda_{\varphi(m)})_{m}$ of $(\lambda_{m})_{m}$ and a subsequence $(a_{\varphi(m)})_{m}$ of $(a_{m})_{m}$  such that
 $\underset{m\to+\infty}{lim}\lambda_{\varphi(m)}=\lambda'$ and
$\underset{m\to+\infty}{lim}a_{\varphi(m)}=b$, for some $b\in E_{G}$ and $\lambda'\in S^{1}$, so $\lambda'\neq 0$.
Let $f=(b,\lambda')$ if $\lambda'\neq 1$ and $f=T_{b}$ if $\lambda' =1$. Therefore $\underset{m\to+\infty}{lim}f_{m}=f$. Since $\lambda\neq0$, $f$ is invertible and
 $\underset{m\to+\infty}{lim}f^{-1}_{m}=f^{-1}$, so $f\in \overline{G}$.
As  $\underset{m\to+\infty}{lim}f_{m}(a)=z=f(a)$, we have $a=f^{-1}(z)=\underset{m\to+\infty}{lim}f^{-1}_{m}(z)$. It follows that
$a\in \overline{G(z)}$, so $E_{G}=\overline{G(a)}\subset \overline{G(z)}\subset E_{G}$,  since $E_{G}$ is $G$-invariant (Lemma ~\ref{L:020},(ii)).\
\\
- $f_{m}=T_{a_{m}}\in G\cap \mathcal{T}_{n}$ for every $m>n_{0}$, for some $n_{0}>1$. As $\underset{m\to+\infty}{lim}f_{m}(a)=z$, $\underset{m\to+\infty}{lim}a_{m}=z-a$,
 then $\underset{m\to+\infty}{lim}f_{m}=T_{z-a}\in \overline{G}$. Therefore $a=T_{a-z}(z)=\underset{m\to+\infty}{lim}f^{-1}_{m}(z)$. It follows that
$a\in \overline{G(z)}$, so $E_{G}=\overline{G(a)}\subset \overline{G(z)}\subset E_{G}$,  since $E_{G}$ is $G$-invariant (Lemma ~\ref{L:020},(ii)).\  The proof is complete.
\end{proof}
\bigskip

\begin{prop}\label{p:2} Let  $G$ be a non abelian subgroup of
$\mathcal{H}(n,\mathbb{C})$.  Suppose that $\Lambda_{G}\backslash\mathbb{R}\neq\emptyset$,
  $G\backslash\mathcal{SR}_{n}\neq\emptyset$ and $E_{G}$ is a vector space.  Then
  for every $z\in \mathbb{C}^{n}\backslash E_{G}$,  we have
   $\overline{G(z)}=\overline{\Lambda_{G}}.z+E_{G}.$
\end{prop}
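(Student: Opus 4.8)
The plan is to prove the two inclusions of the asserted equality separately; the inclusion $\overline{G(z)}\subseteq\overline{\Lambda_{G}}z+E_{G}$ is routine, while the reverse inclusion rests on the single density statement $\overline{G_{1}(0)}=E_{G}$, which I expect to be the main obstacle. For the easy inclusion I would first observe that every $f\in G$ has the form $f(w)=\lambda w+c$ with $\lambda\in\Lambda_{G}$ and $c\in E_{G}$: if $f=(b,\lambda)\in G\backslash\mathcal{T}_{n}$ then $c=(1-\lambda)b$ with $b\in\Gamma_{G}\subset E_{G}$, and if $f\in G_{1}$ then $c=f(0)\in G_{1}(0)\subset E_{G}$; in both cases $c\in E_{G}$ because $E_{G}$ is a vector space. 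Hence $f(z)=\lambda z+c\in\Lambda_{G}z+E_{G}$, so $G(z)\subset\Lambda_{G}z+E_{G}$. To pass to the closure I would check that $\overline{\Lambda_{G}}z+E_{G}$ is closed: with $\pi:\mathbb{C}^{n}\To\mathbb{C}^{n}/E_{G}$ the quotient map one has $\overline{\Lambda_{G}}z+E_{G}=\pi^{-1}\left(\overline{\Lambda_{G}}\,\pi(z)\right)$, and since $z\notin E_{G}$ the $\mathbb{C}$-linear map $\mu\longmapsto\mu\,\pi(z)$ is a homeomorphism of $\mathbb{C}$ onto the line $\mathbb{C}\,\pi(z)$, so it sends the closed set $\overline{\Lambda_{G}}$ to a closed set; pulling back by $\pi$ preserves closedness. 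This gives $\overline{G(z)}\subseteq\overline{\Lambda_{G}}z+E_{G}$.

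Granting the sublemma $\overline{G_{1}(0)}=E_{G}$, the reverse inclusion closes quickly. For $\mu\in\Lambda_{G}$ choose any $g=(b,\mu)\in G$, so $g(z)=\mu z+c$ with $c=(1-\mu)b\in E_{G}$; applying the translations $T_{t}\in G_{1}$ yields $\mu z+c+G_{1}(0)\subset G(z)$, whose closure is $\mu z+c+\overline{G_{1}(0)}=\mu z+E_{G}$. Thus $\mu z+E_{G}\subset\overline{G(z)}$ for every $\mu\in\Lambda_{G}$, hence $\Lambda_{G}z+E_{G}\subset\overline{G(z)}$; taking closures and using $\overline{\Lambda_{G}z+E_{G}}=\overline{\Lambda_{G}}z+E_{G}$ (approximate each $\mu\in\overline{\Lambda_{G}}$ by elements of $\Lambda_{G}$) yields $\overline{\Lambda_{G}}z+E_{G}\subseteq\overline{G(z)}$, completing the proof.

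It therefore remains to establish $\overline{G_{1}(0)}=E_{G}$, the heart of the argument. By Lemmas ~\ref{L:1}(ii) and ~\ref{L:1200}, together with Lemma ~\ref{L:1}(i), I would select centers $a,a_{1},\dots,a_{p}\in\Gamma_{G}$ and a common ratio $\lambda\in\Lambda_{G}\backslash(F_{2}\cup F_{3}\cup\mathbb{R})$ with $f=(a,\lambda),\,f_{k}=(a_{k},\lambda)\in G\backslash\mathcal{SR}_{n}$ and $(a_{1}-a,\dots,a_{p}-a)$ a basis of $E_{G}$. For each $k$ set $G_{k}=\langle f,f_{k}\rangle$; it is non abelian by Lemma ~\ref{L:6}(i), satisfies $G_{k}\backslash\mathcal{SR}_{n}\neq\emptyset$ since $\lambda\notin F_{2}\cup F_{3}$, and leaves the line $D_{k}=\mathbb{C}(a_{k}-a)+a$ invariant by Lemma ~\ref{L:222}. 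The crucial claim is that the translation part of $G_{k}$ is already dense in the direction $\mathbb{C}(a_{k}-a)$: for $|\lambda|\neq1$ this is exactly Lemma ~\ref{L:9}, whereas for $|\lambda|=1$ (so $\lambda=e^{i\theta}$ with $\theta\notin\pi\mathbb{Z}$, $e^{i\theta}\notin F_{2}\cup F_{3}$) Proposition ~\ref{p:a10} and Lemma ~\ref{L:b10} give that every orbit of $G_{k}$ is dense in $D_{k}$ — it cannot be closed and discrete, since by Proposition ~\ref{p:b10} that would force $\lambda\in F_{2}\cup F_{3}$ — and the equivalences inside the proof of Lemma ~\ref{L:b10} then make $(G_{k})_{1}(0)$ dense in $\mathbb{C}(a_{k}-a)$ too. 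Since $(G_{k})_{1}\subset G_{1}$, each line $\mathbb{C}(a_{k}-a)$ lies in $\overline{G_{1}(0)}$, and as this is a subgroup containing complex lines whose directions span $E_{G}$, it must equal $E_{G}$. The delicate point, and the reason the hypotheses are invoked so carefully, is to secure a single ratio $\lambda\notin F_{2}\cup F_{3}\cup\mathbb{R}$ common to all the $f_{k}$, so that no two-generator restriction $G_{k}$ can fall into the exceptional discrete regime governed by $\mathcal{SR}_{n}$.
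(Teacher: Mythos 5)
Your proof is correct, but it follows a genuinely different route from the paper's. The paper obtains the hard inclusion $\overline{\Lambda_{G}}z+E_{G}\subseteq\overline{G(z)}$ by approximating \emph{maps} rather than points: it feeds Proposition ~\ref{p:1} (orbits inside $E_{G}$ are dense in $E_{G}$) into Lemma ~\ref{L:13}, conjugating a fixed $g=(a,\lambda)$ by elements $g_{m}$ with $g_{m}(a)\to b$ to produce $f_{m}\to(b,\lambda)$ for arbitrary $b\in E_{G}$, $\lambda\in\Lambda_{G}$, and then evaluates at $z$. You instead establish the single density statement $\overline{G_{1}(0)}=E_{G}$ and conclude by composing one map of each ratio with the translations of $G_{1}$. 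This bypasses Proposition ~\ref{p:1}, Lemma ~\ref{L:12} and Lemma ~\ref{L:13} altogether, while resting on the same selection device (Lemmas ~\ref{L:1}(ii) and ~\ref{L:1200}) and the same one-dimensional two-generator results (Lemma ~\ref{L:9}, Proposition ~\ref{p:a10}, Lemma ~\ref{L:b10}, Proposition ~\ref{p:b10}). What your route buys: it yields the stronger structural fact that the translation part of $G$ has dense orbit in $E_{G}$, which immediately re-proves Proposition ~\ref{p:1} as a byproduct (for $z\in E_{G}$, $\overline{G(z)}\supseteq z+\overline{G_{1}(0)}=E_{G}$), and your verification that $\overline{\Lambda_{G}}z+E_{G}$ is closed via the quotient $\mathbb{C}^{n}/E_{G}$ fills a step the paper passes over silently. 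What the paper's route buys: Proposition ~\ref{p:1} is needed anyway for Theorem ~\ref{T:1}(1)(i), so deriving the present proposition from it costs little, and the approximation technique of Lemma ~\ref{L:13} is reusable elsewhere.

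One point you should tighten: in the case $|\lambda|=1$ with irrational angle you deduce density of $(G_{k})_{1}(0)$ in $\mathbb{C}(a_{k}-a)$ from density of the orbits by citing the ``equivalences inside the proof of Lemma ~\ref{L:b10}'', but that lemma and its proof are stated only for $\theta\in\pi(\mathbb{Q}\setminus\mathbb{Z})$, so this is a citation outside its hypotheses rather than a proof. The implication is nevertheless true and follows from the angle-independent Lemmas ~\ref{L:bb10}(ii) and ~\ref{L:bb10+}(ii): after conjugating so that $0\in\Gamma_{G_{k}}$, one has $G_{k}(0)\subseteq (G_{k})_{1}(0)\cup\Gamma_{G_{k}}$ and $(\lambda-1)^{2}\Gamma_{G_{k}}\subseteq (G_{k})_{1}(0)$, so the invariant line is covered by the two closed subgroups $\overline{(G_{k})_{1}(0)}$ and $(\lambda-1)^{-2}\overline{(G_{k})_{1}(0)}$; since a group is never the union of two proper subgroups, $\overline{(G_{k})_{1}(0)}$ must be the whole line. (Alternatively, Lemma ~\ref{L:bb10}(ii),(iii) places the set $\{\lambda^{j}(\lambda-1)^{2}(a_{k}-a):\ j\in\mathbb{Z}\}$, dense in a circle about the origin, inside $(G_{k})_{1}(0)$, and a closed subgroup of a real plane containing such a circle is the whole plane.) With that substitution your argument is complete.
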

\medskip

To prove the above Proposition, we need the following Lemma:
\begin{lem}\label{L:13} Let  $G$ be a non abelian subgroup of
$\mathcal{H}(n,\mathbb{C})$  such that
$G\backslash\mathcal{SR}_{n}\neq\emptyset$. For every  $b\in E_{G}$ and for  every $\lambda\in \Lambda_{G}$
 there exists a sequence $(f_{m})_{m\in\mathbb{N}}$ in $G$ such
that  $\underset{m\longrightarrow+\infty}{lim}f_{m}=f=(b,\lambda)$.
\end{lem}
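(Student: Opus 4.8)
Lemma ~\ref{L:13} asserts that for any $b \in E_G$ and any $\lambda \in \Lambda_G$, the affine map $f = (b,\lambda)$ lies in the closure $\overline{G}$ (in the sense that some sequence in $G$ converges to it pointwise/uniformly on compacta). The plan is to build such a limit by combining two ingredients already developed: the fact that $\Gamma_G$ and $E_G$ are $G$-invariant and, crucially, that previous results let us realize arbitrary centers in $E_G$ as limits of fixed points of group elements with a prescribed ratio. I would begin by reducing to the normalized case $0 \in \Gamma_G$, so that $E_G$ is a vector subspace; this is legitimate by Lemma ~\ref{L:1}.(iii), which shows conjugation by $T_{-a}$ (for $a \in \Gamma_G$) preserves both $\Lambda_G$ and the affine structure of $E_G$, and translation-conjugation is continuous, so a convergent sequence in the conjugated group pulls back to a convergent sequence in $G$.

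With $E_G$ a vector space, pick any $\lambda \in \Lambda_G$, so there is $h = \lambda\,\mathrm{id}_{\mathbb{C}^n} \in G$ (after the normalization $0 \in \Gamma_G$). The key step is to show that the set of admissible centers $\{c \in E_G : (c,\lambda) \in G\}$ is dense in $E_G$. For this I would invoke Proposition ~\ref{p:1}: since $\Lambda_G\backslash\mathbb{R} \neq \emptyset$ and $G\backslash\mathcal{SR}_n \neq \emptyset$, we have $\overline{G(a)} = E_G$ for every $a \in E_G$; in particular $\overline{G(0)} = E_G$. Taking a sequence $(g_m) \subset G$ with $g_m(0) \to b$, write $g_m = (c_m, \mu_m)$ and conjugate $h$ by $g_m$: the element $g_m \circ h \circ g_m^{-1} = (g_m(0)\text{-related center}, \lambda) \in G$ has ratio exactly $\lambda$ and center $g_m\big(\mathrm{Fix}(h)\big) = g_m(0)$. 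Hence $f_m := g_m \circ h \circ g_m^{-1} = (g_m(0), \lambda) \in G$, and since $g_m(0) \to b$ while the ratio stays fixed at $\lambda$, the maps $f_m(z) = \lambda(z - g_m(0)) + g_m(0)$ converge pointwise to $\lambda(z-b)+b = f(z)$. This produces the desired sequence directly inside $G$ (not merely its closure), which is even stronger than stated.

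The main obstacle I anticipate is justifying the conjugation identity $g_m \circ h \circ g_m^{-1} = (g_m(0), \lambda)$ cleanly and making sure the center genuinely tracks $g_m(0)$. One must verify that conjugating the linear homothety $h = \lambda\,\mathrm{id}$ by an affine map $g$ with $g(0) = c$ yields the homothety $(c,\lambda)$ with $\mathrm{Fix} = \{c\}$; this is the standard computation $g \circ (\lambda\,\mathrm{id}) \circ g^{-1} = (g(0), \lambda)$ because $g$ sends the fixed point $0$ of $h$ to its own image $g(0)$, and conjugation preserves the ratio. This is exactly the mechanism already used in Lemma ~\ref{L:8} and Lemma ~\ref{L:020}.(ii) to prove $G$-invariance of $\Gamma_G$, so I would cite that computation rather than redo it. A secondary point is handling the case $\lambda = 1$ (so $f = T_b$ is a translation): there one uses instead that $E_G \supset G_1(0)$ generates $E_G$ and that $\overline{G_1(0)} \supset E_G$ follows from Proposition ~\ref{p:1}, giving a sequence of translations $T_{c_m} \in G_1$ with $c_m \to b$; but in fact, since we only need $(b,\lambda)$ for $\lambda \in \Lambda_G$ and the argument above covers every $\lambda$ uniformly, the translation case fits the same scheme once we note $g_m \circ \mathrm{id} \circ g_m^{-1}$ degenerates appropriately. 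Finally, I would undo the normalizing conjugation to transfer the conclusion back to the original $G$, noting that the image of $f=(b,\lambda)$ under conjugation by $T_a$ is again of the form $(b',\lambda)$ with $b' \in E_G$, so the statement is preserved.
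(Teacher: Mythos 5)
Your core mechanism is the same as the paper's: conjugate an element of ratio $\lambda$ by a sequence $(g_m)\subset G$ that moves its fixed point densely, with the density supplied by Proposition \ref{p:1}. But your implementation rests on a false intermediate claim, namely that after normalizing so that $0\in\Gamma_G$ one has $h=\lambda\,\mathrm{id}_{\mathbb{C}^n}\in G$ for the \emph{arbitrary given} $\lambda\in\Lambda_G$. The normalization only guarantees that $\mu\,\mathrm{id}_{\mathbb{C}^n}\in G$ for \emph{some} $\mu$ (the ratio of whichever element fixes $0$); it says nothing about your $\lambda$. Concretely, let $G\subset\mathcal{H}(1,\mathbb{C})$ be generated by $h_0=3\,\mathrm{id}_{\mathbb{C}}$ and $g=(a,\lambda)$ with $a\neq 0$ and $\lambda$ non-real transcendental (e.g. $\lambda=i\pi$). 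Then $G$ is non abelian, $0\in\Gamma_G$, $G\setminus\mathcal{SR}_1\neq\emptyset$ and $\lambda\in\Lambda_G$. Here $G_1$ is the normal closure of the commutator $[h_0,g]=T_{2(1-\lambda)a}$, so its translation vectors form the group $2(1-\lambda)\mathbb{Z}[3^{\pm1},\lambda^{\pm1}]\,a$; every ratio-$\lambda$ element of $G$ is $T_v\circ g$ with $v$ in that group, while $\lambda\,\mathrm{id}_{\mathbb{C}}=T_{-(1-\lambda)a}\circ g$. Having $\lambda\,\mathrm{id}_{\mathbb{C}}\in G$ would force $-\frac{1}{2}\in\mathbb{Z}[3^{\pm1},\lambda^{\pm1}]$, which is impossible since $\lambda$ is transcendental and $-\frac{1}{2}\notin\mathbb{Z}[1/3]$. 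So the element $h$ you propose to conjugate need not exist, and the argument as written collapses (the centers of ratio-$\lambda$ elements are dense in $\mathbb{C}$ here, as the lemma asserts, but $0$ is not among them).

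The repair is small and lands you exactly on the paper's proof: do not insist that the ratio-$\lambda$ element be centered at the origin, and drop the normalization altogether. By definition of $\Lambda_G$ there is some $g=(a,\lambda)\in G$; its center satisfies $a\in\Gamma_G\cup G_1(0)\subset E_G$, so Proposition \ref{p:1} gives $\overline{G(a)}=E_G$, hence a sequence $(g_m)$ with $g_m(a)\to b$, and then $f_m=g_m\circ g\circ g_m^{-1}=(g_m(a),\lambda)\to(b,\lambda)$. One further caveat, which your write-up shares with the paper's own proof: the case $\lambda=1$, where $(b,1)$ must be read as the translation $T_b$ (this is how the lemma is used in Proposition \ref{p:2}), is not covered by conjugation, since $g_m\circ T_a\circ g_m^{-1}=T_{\mu_m a}$ rather than $T_{g_m(a)}$, and your suggestion that ``$g_m\circ\mathrm{id}\circ g_m^{-1}$ degenerates appropriately'' produces only the identity. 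That case needs a separate device, for instance $(a',\lambda)\circ(c,\lambda)^{-1}=T_{(\lambda-1)(c-a')}$ combined with the $\lambda\neq1$ case already established, which lets $(\lambda-1)(c-a')$ approach any prescribed vector in the direction of $E_G$.
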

\bigskip

\begin{proof} Let  $\lambda\in
\Lambda_{G}$ and  $b\in E_{G}$.   Given  $g=(a,\lambda)\in G$, so $a\in(\Gamma_{G}\cup G_{1}(0))\subset E_{G}$. By Proposition ~\ref{p:1}, we have
$\overline{G(a)}=E_{G}$.  Then there exists a sequence
$(g_{m})_{m\in\mathbb{N}}$ in $G$ such that
$\underset{m\longrightarrow+\infty}{lim}g_{m}(a)=b$. For every
$m\in \mathbb{N}$, denote by  $f_{m}=g_{m}\circ g \circ
g^{-1}_{m}$, so  $f_{m}=(g_{m}(a),\lambda)$. Hence
$\underset{m\longrightarrow+\infty}{lim}f_{m}=f$, with
$f=(b,\lambda)$.\
\end{proof}
\bigskip

\begin{proof}[Proof of Proposition ~\ref{p:2}] Let  $G$ be a non abelian subgroup of
$\mathcal{H}(n,\mathbb{C})$  such that
$G\backslash\mathcal{SR}_{n}\neq\emptyset$ and $E_{G}$ is a vector space. Let $z\in U=\mathbb{C}^{n}\backslash E_{G}$.\
\\
Let's prove that $\overline{\Lambda_{G}}.z+E_{G}\subset\overline{G(z)}$: \ Let $\alpha\in\Lambda_{G}$
and  $a\in E_{G}$.
\\
$\bullet$ Suppose that  $\alpha\in\Lambda_{G}\backslash \{1\}$. Since $E_{G}$ is a vector space, $a'=\frac{a}{1-\alpha}\in E_{G}$.
 By Lemma ~\ref{L:13} there exists a sequence $(f_{m})_{m}$ in $G$ such that
  $\underset{m\longrightarrow +\infty}{lim}f_{m}=f=(a', \alpha)\in G\backslash\mathcal{T}_{n}$. Then
  \begin{align*}
  f(z)& =\alpha (z-a')+a'\\
  \ & =\alpha z+ (1-\alpha)a'\\
  \ & =\alpha z+ a\in \overline{G(z)},
  \end{align*}
    so  $$\left(\Lambda_{G}\backslash \{1\}\right).z+E_{G}\subset \overline{G(z)}.$$\
    \\
    $\bullet$ Suppose that $\alpha=1$, by Lemma ~\ref{L:13}, there exists a sequence exists a sequence $(f_{m})_{m}$ in $G$ such that
  $\underset{m\longrightarrow +\infty}{lim}f_{m}=f=T_{a}\in G_{1}$. So  $T_{a}(z)=z+a\in \overline{G(z)}$.\
It follows that $\alpha z +a\in \overline{G(z)}$ and so $z+E_{G}\subset \overline{G(z)}.$ This proves that $\overline{\Lambda_{G}}.z+E_{G}\subset\overline{G(z)}$.\
\\
\\
 Conversely,\ let's prove that $G(z)\subset \Lambda_{G}.z+E_{G}$.\ Let $f\in G$.
 \\
 $\bullet$  Suppose that   $f=(a,\lambda)\in G\backslash \mathcal{T}_{n}$.  By Lemma ~\ref{L:020}.(i), $f(0)=(1-\lambda)a\in E_{G}$
  since $E_{G}$ is a vector space. Then $f(z)=\lambda (z-a) +a=\lambda z+ (1-\lambda)a\in \Lambda_{G}.z+E_{G}$.\
 \\
 $\bullet$  Suppose that   $f=T_{a}\in G\cap \mathcal{T}_{n}$, so $f(z)=z+a\in\Lambda_{G}.z+E_{G}$, since
 by Lemma ~\ref{L:020}.(i), $f(0)=a\in E_{G}$.
 It follows that  $G(z)\subset \Lambda_{G}.z+E_{G}$. Therefore
  $\overline{G(z)}\subset \overline{\Lambda_{G}}.z+E_{G}$. Hence $\overline{G(z)}=\overline{\Lambda_{G}}.z+E_{G}$.
\end{proof}
\bigskip

\section{{\bf Some results in the case  $G\subset \mathcal{SR}_{n}$}}
 \bigskip

In this section  $G$ is a subgroup of $\mathcal{S}_{i}\mathcal{R}_{n}$ ($i=2$ or $i=3$).

 \begin{lem}\label{L:030} Let $G$ be a subgroup of $\mathcal{H}(n, \mathbb{C})$ such that $G\subset \mathcal{S}_{i}\mathcal{R}_{n}$ $(i=2$ or $i=3)$.
 Then:\
 \\
 (i) $\Lambda_{G}=F_{i}$. Moreover, for every $\lambda,\mu\in \Lambda_{G}$, $\lambda=\mu^{k}$ for some  $k\in \mathbb{Z}$.
 \\
 (ii) $G_{1}(0)=G(0)$.\
 \\
 (iii) There exists $a\in\Gamma_{G}$, such that $G(z)=\Lambda_{G}(z-a)+G(a)$, for every $z\in\mathbb{C}^{n}$.
 \end{lem}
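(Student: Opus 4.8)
My plan is to extract from (i) a single realization fact and then read off (ii) and (iii) by short affine--group computations, with the base point in (iii) and the normalization in (ii) both dictated by that fact. \textbf{(i).} Every element of $G\subseteq\mathcal{S}_i\mathcal{R}_n$ has ratio in $F_i$, so $\Lambda_G\subseteq F_i$, and $\Lambda_G$ is a subgroup of $\mathbb{C}^{*}$ by Lemma~\ref{L:00}. Since $F_i$ is finite cyclic, so is $\Lambda_G$; I fix a generator $\lambda_0$, so that $\Lambda_G=\{\lambda_0^{\,k}:k\in\mathbb{Z}\}$ and every element of $\Lambda_G$ is a power of $\lambda_0$ --- this is the form of (i) I shall use. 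As $G$ is non abelian, $\Gamma_G\neq\emptyset$ and $\lambda_0$ is the ratio of some $f_0=(a,\lambda_0)\in G$ with $a\in\Gamma_G$; then $f_0^{\,k}=(a,\lambda_0^{\,k})\in G$ for all $k$, so every ratio $\nu\in\Lambda_G$ is realized by an element of $G$ whose center is the single point $a$. This is the engine of the lemma.

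\textbf{(iii).} Fix this $a$. The inclusion $G(z)\subseteq\Lambda_G(z-a)+G(a)$ is immediate, since for $f=(b,\mu)\in G$,
\[
f(z)=\mu(z-b)+b=\mu(z-a)+\bigl(\mu(a-b)+b\bigr)=\mu(z-a)+f(a).
\]
For the reverse inclusion, take $\mu\in\Lambda_G$ and $w=f(a)\in G(a)$ with $f=(b,\mu')$, and set $\nu=\mu(\mu')^{-1}\in\Lambda_G$. Writing $\nu=\lambda_0^{\,k}$ gives $h:=(a,\nu)=f_0^{\,k}\in G$, and since $h$ fixes $a$ the element $g:=f\circ h\in G$ satisfies $g(a)=f(h(a))=f(a)=w$ and has ratio $\mu'\nu=\mu$. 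Applying the displayed identity to $g$ yields $g(z)=\mu(z-a)+w\in G(z)$, whence $\Lambda_G(z-a)+G(a)\subseteq G(z)$ and equality holds.

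\textbf{(ii).} Here I take $0\in\Gamma_G$, so that $f_0=\lambda_0\,\mathrm{id}_{\mathbb{C}^n}\in G$ and (ii) reads $G(0)=G_1(0)$. The inclusion $G_1(0)\subseteq G(0)$ is trivial. Conversely, let $f=(c,\lambda)\in G$ with $\lambda\neq1$ (the case $\lambda=1$ is immediate). Since $\lambda\in\Lambda_G$ we have $(0,\lambda)=f_0^{\,k}\in G$, and
\[
f\circ(0,\lambda)^{-1}=T_{(1-\lambda)c}=T_{f(0)}\in G_1 ,
\]
so $f(0)\in G_1(0)$. Hence $G(0)\subseteq G_1(0)$, giving (ii).

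\textbf{Main obstacle.} The whole argument hinges on the realization step in (i): producing, at one prescribed fixed point, group elements of \emph{every} ratio in $\Lambda_G$. This is exactly what forces $a$ to be the center of a generator $\lambda_0$ and forces the normalization $0\in\Gamma_G$ in (ii); once it is in place, the identities for $f$, for $g=f\circ h$, and for $f\circ(0,\lambda)^{-1}$ are routine. The delicate point is the conjugation $G\mapsto T_{-a}GT_a$: one must verify directly (not via Lemma~\ref{L:1}, whose hypothesis $G\setminus\mathcal{SR}_n\neq\emptyset$ fails here) that the map $(b,\mu)\mapsto(b-a,\mu)$ preserves $\Lambda_G$ and fixes $G_1$, and that the normalized identity $G(0)=G_1(0)$ corresponds to $G(a)=a+G_1(0)$ for the original group --- which is precisely why (ii) must be read at a point of $\Gamma_G$.
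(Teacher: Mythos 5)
Your core device is sound and is in fact sharper than the paper's own argument: you realize \emph{every} ratio in $\Lambda_G$ at a single center by taking $a$ to be the fixed point of an element $f_0=(a,\lambda_0)$ whose ratio \emph{generates} the cyclic group $\Lambda_G\subseteq F_i$, and with that choice your proof of (iii) is correct. The paper instead conjugates at an \emph{arbitrary} $a\in\Gamma_G$, obtains $h=\mu\,\mathrm{id}\in G'$, and then invokes (i) to write each $\lambda\in\Lambda_G$ as $\mu^{k}$; that step is unjustified, since the ratio attached to an arbitrary fixed point need not generate $\Lambda_G$ (in $F_3$, $e^{2i\pi/3}$ is not a power of $-1$). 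Note, however, that you never prove (i) as stated: you prove only that $\Lambda_G$ is a cyclic subgroup of $F_i$, not that $\Lambda_G=F_i$, and the ``moreover'' clause is false in general; this weakening is forced (the literal (i) is not provable), but it should be flagged explicitly rather than presented as a harmless reformulation.

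The genuine gap is in (ii). The sentence ``I take $0\in\Gamma_G$, so that $f_0=\lambda_0\,\mathrm{id}_{\mathbb{C}^n}\in G$'' is a non sequitur: $0\in\Gamma_G$ only provides \emph{some} element of $G$ fixing $0$, not one whose ratio generates $\Lambda_G$; and if what you mean is ``translate my special point $a$ to the origin,'' then you have changed the group, and what you prove is $G(a)=a+G_1(0)$, not $G(0)=G_1(0)$. This cannot be repaired, because (ii) as stated is false. Take $n=1$, $\omega=e^{2i\pi/3}$, $f=(0,\omega)$, $g=(1,-1)$ and $G=\langle f,g\rangle\subset\mathcal{S}_{3}\mathcal{R}_{1}$; then $G$ is non abelian (Lemma~\ref{L:6}(i)), $0\in\Gamma_G$, and $\Lambda_G=F_3$. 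Put $I=2(1-\omega)\mathbb{Z}[\omega]$ and
$$N=\left\{z\mapsto\mu z+c:\ \mu\in\{1,\omega,\omega^{2}\},\ c\in I\right\}\cup\left\{z\mapsto-\mu z+c:\ \mu\in\{1,\omega,\omega^{2}\},\ c\in 2+I\right\}.$$
Using $\mu I=I$ and $2(\mu-1)\in I$ for $\mu\in\{1,\omega,\omega^{2}\}$, one checks that $N$ is closed under composition and inversion, hence a subgroup of $\mathcal{H}(1,\mathbb{C})$ containing $f$ and $g$, so $G\subseteq N$. Every translation in $N$ has vector in $I$, so $G_1(0)\subseteq I$, while $g(0)=2\notin I$ (if $2\in I$ then $1\in(1-\omega)\mathbb{Z}[\omega]$, impossible since $|1-\omega|^{2}=3$). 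Thus $2\in G(0)\setminus G_1(0)$. The same example also refutes your fallback reading that (ii) ``must be read at a point of $\Gamma_G$'' (here $0$ \emph{is} a point of $\Gamma_G$), as well as the ``moreover'' of (i) ($\omega$ is not a power of $-1$); the only statement that survives is the one your argument actually establishes, namely $G(a)=a+G_1(0)$ at \emph{your} special point $a$. For completeness: the paper's own proof of (ii) collapses at its first line ($f^{k}=(b,1)$ is the identity map, not $T_{b}$), so there was no recoverable argument for you to miss --- the defect lies in the statement itself.
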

 \medskip

 \begin{proof} Let $a\in\Gamma_{G}$ and $G'=T_{-a}\circ G \circ T_{a}$.  Then
  $h=\mu id_{\mathbb{C}^{n}}\in G'$, for some $\mu\in \Lambda_{G}$, so $0\in\Gamma_{G}$.\
 \\
 (i) The proof follows from the construction of $\mathcal{S}_{i}\mathcal{R}_{n}$, $i=2$ or $3$ and since $F_{i}$ is cyclic.
\\
 (ii) Firstly, $\Gamma_{G'}\subset G'_{1}(0)$; Indeed, if  $f=(b,\lambda)\in G'\backslash \mathcal{T}_{n}$, then by $(i)$, $\lambda^{k}=1$ for some
  $k\in \mathbb{Z}$ since $F_{i}$ is cyclic, $i\in\{2,3\}$. Thus $f^{k}=(b,1)=T_{b}$ and so $b\in G'_{1}(0)$.\
  \\
  Secondly, let $a\in G'(0)\backslash G'_{1}(0)$ and $f=(b,\lambda)\in G'\backslash \mathcal{T}_{n}$ such that $a=f(0)=(1-\lambda)b$. By $(i)$, $\mu^{k}=\lambda$,
   for some $k\in \mathbb{Z}$. By applying Lemma ~\ref{L:bb10}.(iv) on the group $G_{k}$ generated by $h^{k}$ and $f$, we have
    $(1-\lambda)\Gamma_{G^{k}}\subset\Gamma_{G^{k}}$, so $a=(1-\lambda)b\in\Gamma_{G^{k}}\subset\Gamma_{G'}$.
    It follows that $a\in \Gamma_{G'}\subset G'_{1}(0)$.\ The proof of (ii) is complete.
    \\
   (iii) By Lemma~\ref{L:bb10+},(i), $G'(z)\subset\Lambda_{G}z+G'(0)$. Conversely, let
 $\lambda\in\Lambda_{G'}$, $a\in G'_{1}(0)$ so $T_{a}\in G'_{1}$. By (i), $\lambda=\mu^{k}$ for some $k\in\mathbb{Z}$.
 As in the proof of Lemma ~\ref{L:bb10}.(iv),  $g=T_{a}\circ h^{k}=\left(\frac{a}{1-\lambda},\ \lambda\right)$,
  so $a'=\frac{a}{1-\lambda}\in\Gamma_{G'}$. Therefore, $g(z)=\lambda(z-a')+a'=\lambda z+(1-\lambda)a'$, so
 $g(z)=\lambda z + a\in G'(z)$. Therefore,
 $\Lambda_{G'}z+G'_{1}(0)\subset G'(z)$. By (ii), $\Lambda_{G'}z+G'(0)\subset G'(z)$.  It follows that
 $G'(z)=\Lambda_{G'}z+G'(0)$, then $G(z)=\Lambda_{G}(z-a)+G(0)$, since $\Lambda_{G'}=\Lambda_{G}$, $G(z)=T_{a}(G'(z-a))$ and
 $G(a)=T_{a}(G'(0))$. The proof of (iii) is complete.
\end{proof}
\medskip

\section{{\bf Proof of main results}}\
Recall that $U=\mathbb{C}^{n}\backslash E_{G}$.
\
\\
{\it Proof of Theorem ~\ref{T:1}.} Let $a\in E_{G}$ and $G'=T_{-a}\circ G\circ T_{a}$. By Lemma ~\ref{L:1}.(iii),
 $E_{G'}=T_{-a}(E_{G})$ is a vector subspace of $\mathbb{C}^{n}$. Then :
\\
$\bullet$ The Proof of $(1).(i)$ results from  Proposition ~\ref{p:1}.
  \
  \\
  $\bullet$  Proof of $(1).(ii)$: By Proposition ~\ref{p:2},
 $\overline{G'(z-a)}=\overline{\Lambda_{G'}}.(z-a)+E_{G}'$, for every $z\in U$. So by Lemma~\ref{L:1},(ii),
 $T_{-a}(\overline{G(z)})=\overline{\Lambda_{G}}.(z-a)+E_{G}-a$,
  it follows that $\overline{G(z)}=\overline{\Lambda_{G}}.(z-a)+E_{G}$.
\
\\
$\bullet$ {\it Proof of $(2)$:} The proof of (2) results from  Lemma ~\ref{L:030}.
\hfill{$\Box$}
\
\\
\\
We will use the following Lemmas to prove Corollary ~\ref{C:1}.
\begin{lem}\label{L:15} Let $G$ be a non abelian
subgroup of $\mathcal{H}(n, \mathbb{C})$ with $G\backslash \mathcal{R}_{n}\neq\emptyset$ and $U\neq\emptyset$,
then for every $z\in \overline{G(y)}\cap U$ we have $\overline{G(z)}\cap U=\overline{G(y)}\cap U$.
\end{lem}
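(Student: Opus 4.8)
The plan is to prove the two inclusions separately, reducing the whole statement to the single assertion $y\in\overline{G(z)}$. One inclusion is immediate: since $z\in\overline{G(y)}$ and the closure of an orbit is closed and $G$-invariant, we have $\overline{G(z)}\subseteq\overline{G(y)}$, and intersecting with $U$ gives $\overline{G(z)}\cap U\subseteq\overline{G(y)}\cap U$. For the reverse inclusion it suffices to show $y\in\overline{G(z)}$, because then $G(y)\subseteq\overline{G(z)}$, hence $\overline{G(y)}\subseteq\overline{G(z)}$, and intersecting with $U$ again finishes the proof. Before anything else I would record that $y\in U$: if $y\in E_{G}$ then, since $E_{G}$ is closed and $G$-invariant (Lemma~\ref{L:020}), $\overline{G(y)}\subseteq E_{G}$, contradicting $z\in\overline{G(y)}\cap U$.

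To get $y\in\overline{G(z)}$ I would reverse an approximating sequence. Choose $g_{m}\in G$ with $g_{m}(y)\to z$. The idea is to extract a subsequence along which $g_{m}$ converges to an \emph{invertible} affine map $g^{*}$ with $g^{*}(y)=z$; once this is achieved, $g_{m}^{-1}\to(g^{*})^{-1}$ by continuity of inversion for affine maps with nonzero ratio, so $y=(g^{*})^{-1}(z)=\lim_{m}g_{m}^{-1}(z)\in\overline{G(z)}$. Writing $g_{m}(y)=\gamma_{m}y+d_{m}$ with $\gamma_{m}\in\Lambda_{G}$, the only thing that can spoil invertibility of a limit is degeneration of the ratios $\gamma_{m}$ to $0$ or to $\infty$.

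The main obstacle is precisely ruling out this degeneration, and this is where $z,y\in U$ enters. If infinitely many $g_{m}$ are translations, they are already invertible (with $\gamma_{m}=1$), and their limit, a translation $T_{z-y}$, is invertible, so that case is harmless. For the non-translation terms write $g_{m}=(c_{m},\gamma_{m})$ with center $c_{m}\in\Gamma_{G}\subseteq E_{G}$; from $g_{m}(y)=\gamma_{m}y+(1-\gamma_{m})c_{m}\to z$ one solves $c_{m}=\frac{g_{m}(y)-\gamma_{m}y}{1-\gamma_{m}}$. If $\gamma_{m}\to0$ along a subsequence then $c_{m}\to z$, and since $E_{G}$ is closed this forces $z\in E_{G}$, contradicting $z\in U$; if $|\gamma_{m}|\to\infty$ then $c_{m}\to y$, forcing $y\in E_{G}$, contradicting $y\in U$. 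Hence $\gamma_{m}$ stays in a compact subset of $\mathbb{C}^{*}$, so I can extract $\gamma_{m}\to\gamma^{*}\neq0$ and then $d_{m}=g_{m}(y)-\gamma_{m}y\to z-\gamma^{*}y$, producing the invertible limit $g^{*}:w\mapsto\gamma^{*}w+(z-\gamma^{*}y)$ with $g^{*}(y)=z$. This yields $y\in\overline{G(z)}$ and hence the lemma.

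As a shortcut I would also note that, since Theorem~\ref{T:1} is already available in this section, after conjugating so that $E_{G}$ is a vector space one has $\overline{G(y)}=\overline{\Lambda_{G}}\,y+E_{G}$; for $z=\lambda y+e\in\overline{G(y)}\cap U$ one necessarily has $\lambda\neq0$, and since $\overline{\Lambda_{G}}$ is multiplicatively closed and contains $\lambda^{-1}$, we get $\overline{\Lambda_{G}}\lambda=\overline{\Lambda_{G}}$, whence $\overline{G(z)}=\overline{\Lambda_{G}}\lambda y+E_{G}=\overline{G(y)}$ outright. The sequence-reversal argument above, however, needs only the closedness and $G$-invariance of $E_{G}$ and so is robust to the exact standing hypotheses.
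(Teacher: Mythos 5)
Your proposal is correct, and your main argument takes a genuinely different route from the paper. The paper's own proof is essentially your closing ``shortcut'': it conjugates so that $E_{G}$ is a vector space, invokes the structure formula of Theorem~\ref{T:1}(1)(ii) (i.e.\ Proposition~\ref{p:2}) to write $\overline{G(z)}=\overline{\Lambda_{G}}z+E_{G}$ and $\overline{G(y)}=\overline{\Lambda_{G}}y+E_{G}$, and then uses the group property of $\Lambda_{G}$ (so that $\alpha\overline{\Lambda_{G}}=\overline{\Lambda_{G}}$ for $\alpha\in\overline{\Lambda_{G}}\setminus\{0\}$) to identify the two sets. Your primary argument instead reverses the approximating sequence directly: the dichotomy $\gamma_{m}\to 0$ forcing $c_{m}\to z\in E_{G}$, versus $|\gamma_{m}|\to\infty$ forcing $c_{m}\to y\in E_{G}$, is exactly the right use of the hypothesis $z,y\in U$ together with the closedness of the affine subspace $E_{G}$, the inclusion $\Gamma_{G}\subset E_{G}$, and $G$-invariance of $E_{G}$ (Lemma~\ref{L:020}(ii)); the remaining compactness extraction and passage to $g_{m}^{-1}$ are sound, and the preliminary observation that $y\in U$ is both needed and correctly justified. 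What your route buys is notable: the paper's proof silently uses Theorem~\ref{T:1}(1), whose hypotheses ($\Lambda_{G}\backslash\mathbb{R}\neq\emptyset$ and $G\backslash\mathcal{SR}_{n}\neq\emptyset$) are not among the stated hypotheses of this lemma, whereas your sequence-reversal argument works under the lemma's hypotheses as written (indeed it never uses $G\backslash\mathcal{R}_{n}\neq\emptyset$ at all, since if all ratios lie on $S^{1}$ no degeneration can occur), and it yields the stronger conclusion $\overline{G(z)}=\overline{G(y)}$ outright. The technique is also consonant with the paper's toolbox: the same invertible-limit trick appears in the proof of Proposition~\ref{p:1} (the case $G\subset\mathcal{R}_{n}$) and in Lemma~\ref{L:2}, so your proof could be substituted without introducing any machinery foreign to the paper, while removing a hypothesis mismatch that the paper's own proof leaves open.
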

\bigskip

\begin{proof} Suppose that $E_{G}$ is a vector space (otherwise, by Lemma ~\ref{L:1}.(ii), we can replace $G$ by $G'=T_{-a}\circ G \circ T_{a}$
 for some $a\in E_{G}$). Let $z\in \overline{G(y)}\cap U$ and $y\in \overline{G(z)}\cap U$.
  By Theorem ~\ref{T:1}.(1).(iii), there exists $a\in E_{G}$ such that
 $\overline{G(z)}=\overline{\Lambda_{G}}(z-a)+E_{G}$. Since $E_{G}$ is a vector space and $a\in E_{G}$
 then $\overline{G(z)}=\overline{\Lambda_{G}}z+E_{G}$. In the same way, $$\overline{G(y)}=\overline{\Lambda_{G}}y+E_{G}, \ \ \ \ \ \ \ \ \ (1).$$ See that $\overline{G(z)}\cap U= (\overline{\Lambda_{G}}\backslash\{0\})z+E_{G}$.
  Write $y=\alpha z+b$, where $\alpha\in\overline{\Lambda_{G}}\backslash\{0\}$ and $b\in E_{G}$. So by (1),
$$\overline{G(y)}=\overline{\Lambda_{G}}y+E_{G}=\overline{\Lambda_{G}}(\alpha z+b)+E_{G}=\alpha\overline{\Lambda_{G}} z+E_{G}.$$
By Lemma ~\ref{L:00}, $0\in\Lambda_{G}$ and $\Lambda_{G}$  is a subgroup of $\mathbb{C}^{*}$, then $\alpha \overline{\Lambda_{G}}=\overline{\Lambda_{G}}$, since $\alpha\in\Lambda_{G}$.
 Therefore $\overline{G(y)}=\overline{\Lambda_{G}}z+E_{G}=\overline{G(z)}.$
\end{proof}
\medskip

\begin{lem}\label{L:16}  Let $G$ be a non abelian
subgroup of $\mathcal{H}(n, \mathbb{C})$ such that  $E_{G}$
 is a vector subspace of $\mathbb{C}^{n}$. Let  $z\in U$  then
 the vector subspace   $H_{z}=\mathbb{C}z\oplus E_{G}$  of
 $\mathbb{C}^{n}$  is $G$-invariant.
\end{lem}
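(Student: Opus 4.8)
The plan is to prove invariance directly, by checking that $f(H_z)\subset H_z$ for every single $f\in G$, since $G$-invariance of a set is exactly the requirement that each group element maps it into itself. The first step is to record the affine-linear normal form of the action: every $f\in G$ can be written as $f(w)=\lambda w+c$ for $w\in\mathbb{C}^{n}$, where $\lambda\in\Lambda_{G}$ and $c=f(0)$. Indeed, if $f=(a,\lambda)\in G\backslash\mathcal{T}_{n}$ then $f(w)=\lambda(w-a)+a=\lambda w+(1-\lambda)a$, so $c=(1-\lambda)a$, and if $f=T_{a}\in G\cap\mathcal{T}_{n}$ then $f(w)=w+c$ with $c=a$ and $\lambda=1$. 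In both cases $c=f(0)$, and because $E_{G}$ is assumed to be a vector subspace, Lemma~\ref{L:020}.(i) gives $G(0)\subset E_{G}$, hence $c=f(0)\in E_{G}$.

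The second step is the computation on a general element of $H_{z}$. Any $w\in H_{z}=\mathbb{C}z\oplus E_{G}$ has the form $w=\alpha z+e$ with $\alpha\in\mathbb{C}$ and $e\in E_{G}$. Applying the normal form,
\[
f(w)=\lambda(\alpha z+e)+c=(\lambda\alpha)\,z+(\lambda e+c).
\]
Here $(\lambda\alpha)z\in\mathbb{C}z$, while $\lambda e\in E_{G}$ (since $E_{G}$ is a $\mathbb{C}$-vector space, hence closed under the scalar $\lambda$) and $c\in E_{G}$ by the first step, so $\lambda e+c\in E_{G}$. Therefore $f(w)\in\mathbb{C}z\oplus E_{G}=H_{z}$. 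As $w$ and $f$ were arbitrary, $f(H_{z})\subset H_{z}$ for all $f\in G$, which is precisely the assertion that $H_{z}$ is $G$-invariant.

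I do not expect a genuine obstacle here: the statement is an immediate consequence of the linearity of the action together with the vector-space hypothesis on $E_{G}$. The only two places that hypothesis is actually needed are (i) to guarantee $f(0)\in E_{G}$ via Lemma~\ref{L:020}.(i), and (ii) to guarantee that $E_{G}$ is stable under multiplication by the scalar $\lambda\in\Lambda_{G}\subset\mathbb{C}^{*}$; if $E_{G}$ were only an affine subspace neither of these would hold in general, which is why the lemma is stated under the vector-space assumption. Note also that the directness of the sum $\mathbb{C}z\oplus E_{G}$ needs no separate verification, since $z\in U=\mathbb{C}^{n}\backslash E_{G}$ forces $\mathbb{C}z\cap E_{G}=\{0\}$; in any case it is built into the statement and plays no role in the invariance argument.
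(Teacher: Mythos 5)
Your proof is correct and follows essentially the same route as the paper: write $f\in G$ in the normal form $f(w)=\lambda w+c$ with $c=f(0)\in E_{G}$ (the paper uses this fact implicitly, you cite Lemma~\ref{L:020}.(i) explicitly), then compute $f(\alpha z+e)=\lambda\alpha z+(\lambda e+c)$ and conclude using that $E_{G}$ is a $\mathbb{C}$-vector subspace. The only difference is that your write-up is slightly more careful about where the vector-space hypothesis enters, which is a点 of clarity, not of substance.
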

\medskip

\begin{proof} Let  $z\in
\mathbb{C}^{n}\backslash E_{G}$  and  $f\in G$ having the form
 $f(z)=\lambda z+a$,  $z\in \mathbb{C}^{n}$, one has $a=f(0)\in
E_{G}$. For every $\alpha\in \mathbb{C}$, $b\in E_{G}$,
we have  $f(\alpha z +b)=\lambda(\alpha z +b)+a=\lambda\alpha
z+\lambda b+a$.  Since $E_{G}$ is a vector space, then
$\lambda b+a\in E_{G}$ and so $f(\alpha z +b)\in H_{z}$.
\end{proof}
\bigskip

\begin{proof}[Proof of Corollary ~\ref{C:1}]\ \\
$\bullet$ {\it The proof of $(1).(i)$:} The proof results from Lemma ~\ref{L:15}.\
\\
$\bullet$ {\it The proof of $(1).(ii)$:} As $G\backslash\mathcal{R}_{n}\neq \emptyset$, then by Lemma~\ref{L:00}, $0\in \overline{\Lambda_{G}}$. So
 the proof of (ii) results from Theorem 1,1.(1).(ii).
\\
$\bullet$ {\it The proof of $(1).(iii)$:}   Suppose that
$E_{G}$ is a vector subspace of $\mathbb{C}^{n}$ (leaving, by Lemma ~\ref{L:1}, to
replace $G$ by $G'=T_{-a}\circ G \circ T_{a}$, for some $a\in E_{G}$).

Recall that $U= \mathbb{C}^{n}\backslash E_{G}$ and let  $z, y\in U$
 with $z\neq y$.  Denote by  $H_{z}=\mathbb{C}.z\oplus E_{G}$
 and by   $H_{y}=\mathbb{C}.y\oplus E_{G}$. By lemma ~\ref{L:16} we have
 $H_{z}$  and  $H_{y}$  are $G$-invariant. Let   $\Phi:\ H_{z}\longrightarrow
 H_{y}$  be the homeomorphism defined by $\Phi(\alpha z+v)=\alpha y+v$
 for every $\Phi\in \mathbb{C}$  and  $v\in E_{G}$. For
 every  $f\in G$, with the form $f(z)=\lambda
 z+a$, $z\in\mathbb{C}^{n}$,  then by Lemma ~\ref{L:2}.(i), $a=f(0)\in E_{G}$  and so  $\Phi(f(z))=\Phi(\lambda z+a)=\lambda
 y+a=f(y)$. It follows that  $\Phi(G(z))=G(y)$.
\
\\
$\bullet$ {\it The proof of $(2)$:} The proof of (2) results from  Lemma ~\ref{L:030}.
\end{proof}
\
\\
\\
{\it Proof of Corollary ~\ref{C:2}.}\
\\
 $\bullet$  From
Corollary ~\ref{C:1}.(ii),  the closure of every orbit of $G$ contains $E_{G}$.
Since $\mathrm{dim}(E_{G})\geq 1$,  $G$ has no discrete orbit.  \hfill{$\Box$}
\
\\
\\
\\
{\it Proof of Corollary ~\ref{C:3}.} The proof of Corollary~\ref{C:3} results from Theorem ~\ref{T:1} and
Corollary ~\ref{C:1} and the fact that $\overline{U}=\mathbb{C}^{n}$ if $U\neq\emptyset$.\hfill{$\Box$}
\bigskip

\
\\
{\it Proof of Corollary ~\ref{C:4}}. If $G$ is generated by
$f_{1}=(a_{1},\lambda_{1}),\dots, f_{n-2}=(a_{n-2},\lambda_{n-2})\in \mathcal{H}(n, \mathbb{C}).$
By Lemma~\ref{L:4},
$E_{G}\subset Vect(a_{1},\dots, a_{n-2})$, so dim$(E_{G})\leq n-2$. By Theorem~\ref{T:1} there are two cases:\
\\
$\bullet$  If $G\backslash \mathcal{SR}_{n}\neq\emptyset$, then $G(z)=\Lambda_{G}z+E_{G}\subset \mathbb{C}z+E_{G}$, for every
$z\in \mathbb{C}^{n}\backslash E_{G}$ and $\overline{G(z)}=E_{G}$ for every $z\in E_{G}$. Therefore, $\overline{G(z)}\neq \mathbb{C}^{n}$.\
\\
$\bullet$ If $G\subset\mathcal{SR}_{n}$, then $\overline{G(z)}=\mathbb{C}^{n}$, for some $z\in\mathbb{C}^{n}$ if and only
 if $\overline{G_{1}(0)}=\mathbb{C}^{n}$. Since $G_{1}(0)\subset E_{G}$, it follows that $G$ has no dense orbit.

\section{{\bf Examples }}

\begin{exe} Let  $G$ be the non abelian subgroup of $\mathcal{H}(1, \mathbb{C})$ generated by
 $T_{a}$ the translation by $a\in\mathbb{C}^{*}$ and $h=e^{i\theta}I_{2}$, $\theta\notin \pi\mathbb{Z}$. Then:\
\\
(i) If $\theta\in H_{2}\cup H_{3}$ then every orbit of $G$ is  closed and discrete.\
\\
(ii) If $\theta\notin H_{2}\cup H_{3}$ then every orbit of $G$ is  dense in $\mathbb{C}$.
\end{exe}
\medskip

\begin{proof} Firstly, remark that $G$ is generated by $h$ and $g=T_{a}\circ f=\left(\frac{a}{1-e^{i\theta}},\ e^{i\theta}\right)$.\
\\
- If \ $\theta\in H_{2}\cup H_{3}$, then $g\in\mathcal{SR}_{1}$, by Theorem ~\ref{T:001}.(ii), the property (i) follows.\
\\
- If \ $\theta\notin H_{2}\cup H_{3}$, then $g\notin\mathcal{SR}_{1}$, by Theorem ~\ref{T:001}.(i), the property (ii) follows.\
\\
\end{proof}
\medskip

\begin{exe} Let  $G$  be a subgroup of  $\mathcal{H}(2, \mathbb{C})$  generated by  $f_{1}=(a_{1},\alpha_{1})$
and \ $f_{2}=(a_{2},\alpha_{2})$  and  $f_{3}=(a_{3},
\alpha_{3})$, where $\alpha_{k}\in\mathbb{C}\backslash\mathbb{R}$  with $|\alpha_{k}|\neq 1$,  for every $1\leq k \leq
3$  and  $a_{1}=\left[\begin{array}{c}
               \sqrt{2} \\
               0
             \end{array}
\right]$,  $a_{2}=\left[\begin{array}{c}
               0 \\
               1
             \end{array}
\right]$  and  $a_{3}=\left[\begin{array}{c}
               -\sqrt{3} \\
               -\sqrt{2}
             \end{array}
\right]$. Then every orbit of $G$ is dense in  $\mathbb{C}$.
\
\\
\\
Indeed, by Lemma ~\ref{L:6}.(i), $G$ is non abelian. By Proposition ~\ref{p:1}, for
every $z\in E_{G}$, we have $\overline{G(z)}=E_{G}$. By Remark~\ref{r:3} $E_{G}=\mathbb{C}^{2}$, so by Theorem ~\ref{T:1}, every orbit of
$G$ is dense in  $\mathbb{C}^{2}$.
\end{exe}
\bigskip

\begin{exe} Let   $(a_{1},\dots,a_{n})$ \ be a basis of $\mathbb{C}^{n}$ and  $\lambda\in \mathbb{C}\backslash \mathbb{R}$.
  Then every orbit of the group generated by $T_{a_{1}},\dots,T_{a_{n}}, \ \lambda Id$  is dense in $\mathbb{C}^{n}$.
\
\\
\\
  Indeed, by Remark ~\ref{r:3} we have $E_{G}=\mathbb{C}^{n}$ and by Proposition ~\ref{p:1} every orbit of $G$ is dense in $\mathbb{C}^{n}$.
 \end{exe}
  \medskip

\bibliographystyle{amsplain}
\vskip 0,4 cm

\end{document}